\documentclass[12pt,reqno]{amsart} 
\usepackage{mathtools}

\mathtoolsset{showonlyrefs}
\usepackage[hmarginratio={1:1},scale=0.8]{geometry} 
\usepackage{enumerate} 
\usepackage{amssymb} 
\usepackage{cite,color} 
\usepackage{bm} \numberwithin{equation}{section}

\usepackage{amsfonts} 

\newcommand{\rn}{\mathbb R^n}

\newcommand{\sn}{ {S^{n-1}}}
\newcommand{\s}{ {S^{1}}}

\newtheorem{lemma}{Lemma}[section] 
\newtheorem{theorem}[lemma]{Theorem} 
\newtheorem{conjecture}[lemma]{Conjecture}

\newtheorem{defi}[lemma]{Definition} 
\newtheorem{coro}[lemma]{Corollary} 
 
\newtheorem{prop}[lemma]{Proposition} 
\newtheorem{remark}[lemma]{Remark} 
\title{On the planar Gaussian-Minkowski problem} 
\author[S. Chen]{Shibing Chen}
\address{School of Mathematical Sciences, University of Science and Technology of China,Hefei, 230026, China}
\email{chenshib@ustc.edu.cn}

\author[S. Hu]{Shengnan Hu}
\address{School of Mathematical Sciences, University of Science and Technology of China,Hefei, 230026, China}
\email{helenhsn@mail.ustc.edu.cn}

\author[W. Liu]{Weiru Liu}
\address{School of Mathematical Sciences, University of Science and Technology of China,Hefei, 230026, China}
\email{lwr19997@mail.ustc.edu.cn}

\author[Y. Zhao]{Yiming Zhao}
\address{Department of Mathematics, Syracuse University, Syracuse, NY 13244, USA}
\email{yzhao197@syr.edu}
\keywords{Minkowski problems, Gaussian Minkowski problem, Monge-Amp\`{e}re equations, degree theory}
\subjclass[2010]{52A40, 52A38, 35J96}
\thanks{Research of Chen was supported by National Key R and D program of China 2022YFA1005400, NSFC No. 12225111, NSFC No. 12071482 and
NSFC No. 12141105.}
\thanks{Research of Zhao was supported, in part, by US NSF Grant DMS--2132330.}

\begin{document}
\begin{abstract} 
The current work focuses on the Gaussian-Minkowski problem in dimension 2. In particular, we show that if the Gaussian surface area measure is proportional to the spherical Lebesgue measure, then the corresponding convex body has to be a centered disk. As an application, this ``uniqueness'' result is used to prove the existence of smooth small solutions to the Gaussian-Minkowski problem via a degree-theoretic approach.
\end{abstract}
\date{\today}

\maketitle


\section{Introduction}

The study of Minkowski-type problems and their associated isoperimetric inequalities is at the core of convex geometric analysis. This line of research can be traced back to the seminal work \cite{minkowski2} of Minkowski and the celebrated Brunn-Minkowski inequality due to Brunn and Minkowski in the 19th century. The comprehensive survey \cite{MR1898210} by Gardner explores the many connections between the Brunn-Minkowski inequality (and its various generalizations) and other isoperimetric-type inequalities, both geometric and analytic. The Brunn-Minkowski inequality is strongly connected to the classical Minkowski problem which asks for the existence and uniqueness of a convex body whose \emph{surface area measure} is prescribed by a given Borel measure $\mu$ on $\sn$. Here, by convex body we mean a compact convex subset of $\rn$ with nonempty interior, and if $K$ is a convex body, its surface area measure is given by
\begin{equation}
	S_K(\eta)=\mathcal{H}^{n-1}(\nu_K^{-1}(\eta)),
\end{equation}
for each Borel set $\eta\subset \sn$ where $\nu_K^{-1}$ is the inverse Gauss map. Surface area measure can be viewed as the differential of the volume functional.  The Minkowski problem is equivalent to the following PDE on $\sn$:
\begin{equation}
	\det(\nabla^2_{\sn} h+hI)=\mu,
\end{equation}
and motivated much of the development of Monge-Amp\`{e}re equations in the last century. See the works of Minkowski \cite{MR1511220}, Aleksandrov \cite{MR0001597}, Cheng-Yau \cite{MR0423267}, Pogorelov\cite{MR0478079}, and Caffarelli \cite{MR1005611, MR1038359,MR1038360}. In particular, in differential geometry, it is the problem of prescribing Gauss curvature. To see intuitively why the Brunn-Minkowski inequality and the Minkowski problem are naturally intertwined, note that the Brunn-Minkowski inequality states roughly that the $n$-th root of the volume functional $V^{\frac{1}{n}}$ is concave, whereas the Minkowski problem studies surface area measures---the ``derivative'' of volume functional. 

Motivated by the success of the classical Brunn-Minkowski theory, in the last three decades, there have been some crucial types of variants of the classical Minkowski problem---all of them involving prescribing certain geometric measures generated by ``differentiating'' geometric invariants in ways similar to the one leading to surface area measure. These Minkowski problems can be viewed as the problem of prescribing different curvature functions in the smooth case, and lead to many new (and challenging) Monge-Amp\`{e}re equations. Some of the most prominent Minkowski-type problems include the $L_p$ Minkowski problem (see \cite{MR1231704, MR2132298,MR2254308}), the logarithmic Minkowski problem (see \cite{MR3037788}), and the dual Minkowski problem (see \cite{MR3573332}). More details will be provided. 

It is natural to attempt to migrate this theory to other measurable spaces. Of particular interest is $\rn$ when equipped with the Gaussian probability measure $\gamma_n$ given by 
\begin{equation}
	\gamma_n(E)=\frac{1}{(2\pi)^\frac{n}{2}} \int_{E}e^{-\frac{|x|^2}{2}}dx.
\end{equation} 
Note that a very distinct feature of Gaussian probability measure, when compared to Lebesgue measure, is that it is not uniform, or even homogeneous. This makes it conceptually extremely challenging to even believe the validity of a Brunn-Minkowski inequality in $(\rn, \gamma_n)$---note that the exponent $\frac{1}{n}$ in the Brunn-Minkowski inequality corresponds to how the volume scales when a convex body is rescaled. Instead, the isoperimetric-type inequality in the Gaussian probability space is the Erhard inequality \eqref{eq 3.28.4} which does not make use of homogeneity. It is quite surprising to see that a dimensional Brunn-Minkowski inequality actually holds in $(\rn, \gamma_n)$ for origin-symmetric convex bodies. This was initially conjectured by Gardner-Zvavitch \cite{MR2657682}, with important contribution by Kolesnikov-Livshyts \cite{MR4238914} followed by a recent confirmation by Eskenazis-Moschidis \cite{alex2020dimensional}. Gardner-Zvavitch \cite{MR2657682} observed that this inequality neither implies nor is implied by Ehrhard's inequality. The dimensional Brunn-Minkowski inequality is also linked with the conjectured log-Brunn-Minkowski inequality (planar case established in \cite{MR2964630})---an inequality in the Lebesgue measure space but with different addition---following a result by Livshyts-Marsiglietti-Nayar-Zvavitch \cite{MR3710641}, which was very recently extended in \cite{hosle2020l_p}. We also would like to mention the work of Borell \cite{MR399402}.

Motivated by the rich theory regarding isoperimetric inequalities in $(\rn, \gamma_n)$, Huang, Xi and the last author \cite{MR4252759} studied the Minkowski problem in $(\rn, \gamma_n)$. Let $K$ be a convex body in $\rn$ that contains the origin as an interior point. The Gaussian surface area of $K$, denoted by $S_{\gamma_n, K}$, is the unique Borel measure that satisfies
\begin{equation}
	\lim_{t\rightarrow 0} \frac{\gamma_n(K+tL)-\gamma_n(K)}{t} = \int_{\sn } h_L dS_{\gamma_n, K}
\end{equation}
for each convex body $L$ in $\rn$ that contains the origin as an interior point. Here $h_L$ is the support function of $L$, see \eqref{eq 3.28.5}. A more explicit formula for $S_{\gamma_n, K}$ is given in \eqref{eq 3.28.6}. 

\textbf{The Gaussian Minkowski problem.} Given a finite Borel measure $\mu$, what are the necessary and sufficient conditions on $\mu$ so that there exists a convex body $K$ with $o\in \text{int}\,K$ such that
\begin{equation}
\label{eq local 00005}
	\mu  = S_{\gamma_n, K}?
\end{equation}
If $K$ exists, to what extent is it unique?

When the given measure $\mu$ has a density $d\mu = fdv$, \eqref{eq local 00005} is equivalent to the following PDE on $\sn$:
\begin{equation}
\label{eq 3.28.7}
	\frac{1}{(2\pi)^\frac{n}{2}} e^{-\frac{|\nabla h|^2+h^2}{2}}\det (\nabla^2 h+hI) = f.
\end{equation}

Due to the unique properties of the Gaussian density, in \cite{MR4252759}, results regarding the Gaussian Minkowski problem are restricted to the subset $\gamma_n(K)\geq 1/2$. This is where the Erhard inequality readily implies the uniqueness part of the Gaussian Minkowski problem: if $ S_{\gamma_n, K}=S_{\gamma_n, L}$ and $ \gamma_n(K), \gamma_n(L)\geq 1/2$, then $K=L$. See Theorem 1.1 in \cite{MR4252759}. We point out that without this restriction, the Erhard inequality is of no use for the uniqueness part of Gaussian Minkowski problem. In particular, when $\gamma_n(K)<\frac{1}{2}$, it is in fact possible that \eqref{eq local 00005} more than one solutions. This conceptually makes the existence part of the Gaussian Minkowski problem more challenging as the possibility of multiple solutions often implies increasing difficulties in providing $C^0$ estimates required in the solution of a Minkowski problem. 

In the current work, we study the existence and the uniqueness part of the Gaussian Minkowski problem in dimension 2 \footnote{Technically speaking, the Gaussian surface area in dimension 2 should be better referred to as the Gaussian perimeter measure. However, we shall keep its original name, for consistence with the case in higher dimensions.}, without the restriction that $\gamma_2(K)\geq 1/2$.

To fully appreciate the unique challenge in the Gaussian Minkowski problem, we provide the background in other \emph{relatively} more well-studied Minkowski-type problems for comparison.

There are two major variants of the classical Brunn-Minkowski theory. 

One is the $L_p$ Brunn-Minkowski theory initiated by the two landmark papers \cite{MR1231704, MR1378681} by Lutwak in the early 1990s where he defined the \emph{$L_p$ surface area measure} fundamental in the now fruitful $L_p$ Brunn-Minkowski theory  central in modern convex geometric analysis. It is crucial to point out that such extension is highly nontrivial and often requires new techniques. See, for example, \cite{MR2254308, MR2132298,MR2067123,MR3366854,MR3356071,MR3415694,MR3148545,MR2652209,MR2680490,MR1901250,MR2019226,MR2729006,MR1987375} for a (not even close to exhaustive) list of works in the $L_p$ Brunn-Minkowski theory. In particular, the theory becomes \emph{significantly} harder when $p<1$. These include the critical centro-affine case $p=-n$ and the logarithmic case $q=0$. Isoperimetric inequalities and Minkowski problems in neither cases have been fully addressed. In particular, the \emph{log Minkowski problem} (for the \emph{cone volume measure}) has not yet been fully solved. See, for example, Bianchi-B\"or\"oczky-Colesanti-Yang \cite{MR3872853}, Chou-Wang \cite{MR2254308}, Guang-Li-Wang \cite{https://doi.org/10.48550/arxiv.2203.05099}, Zhu \cite{MR3356071, MR3228445} among many other works. In fact, the $p=0$ case harbors the \emph{log Brunn-Minkowski conjecture} (see, for example, B\"or\"oczky-LYZ \cite{MR2964630})---arguably the most crucial conjecture in convex geometric analysis in the past decade. The log Brunn-Minkowski conjecture has been verified in dimension 2 and in various special classes of convex bodies. See, for example, Chen-Huang-Li-Liu \cite{MR4088419}, Colesanti-Livshyts-Marsiglietti \cite{MR3653949},  Kolesnikov-Livshyts \cite{MR4485961}, Kolesnikov-Milman \cite{MR4438690},  Milman \cite{https://doi.org/10.48550/arxiv.2104.12408}, Putterman \cite{MR4220744}, Saroglou \cite{MR3370038}. If proven correct, it is much stronger than the classical Brunn-Minkowski inequality.

The other is the dual Brunn-Minkowski theory initiated by Lutwak in the 1970s.  Compared to the classical theory which focuses more on projections and boundary shapes of convex bodies, the dual Brunn-Minkowski theory focuses more on intersections and interior properties of convex bodies. This explains the crucial role that the dual theory played in the solution of the well-known and the then long-standing \emph{Busemann-Petty problem} in the 1990s. See, for example, \cite{MR1298719, MR1689343, MR963487, MR1689339}.
The counterparts for the quermassintegrals in the dual theory are the \emph{dual quermassintegrals}. However, it was not until the groundbreaking work \cite{MR3573332} of Huang-Lutwak-Yang-Zhang (Huang-LYZ) that the geometric measures associated with dual quermassintegrals were revealed. This led to \emph{dual curvature measures} dual to Federer's curvature measures. The Minkowski problem for dual curvature measures, now known as the dual Minkowski problem, has been the focus in convex geometry and fully nonlinear elliptic PDEs for the last couple of years and has already led to a number of papers in a short period of time. See, for example, B\"{o}r\"{o}czky-Henk-Pollehn \cite{MR3825606}, Chen-Huang-Zhao \cite{MR3953117}, Chen-Li \cite{MR3818073}, Gardner-Hug-Weil-Xing-Ye \cite{MR3882970}, Henk-Pollehn \cite{MR3725875}, Li-Sheng-Wang \cite{MR4055992}, Zhao \cite{MR3880233}. It is important to note that the list is by no means exhaustive.

Recently, Lutwak-Xi-Yang-Zhang \cite{XLYZ} introduced the first Minkowski-type problem in integral geometry, known as the chord Minkowski problem.

Despite many cases of the $L_p$ Minkowski problem and the $(L_p)$ dual Minkowski problem still being outstanding, one of the properties enjoyed across all these problems is that the geometric measures involved always possess certain homogeneity. Thus, if the prescribed measure $\mu$ is proportional to spherical Lebesgue measure, then there exists one and only one constant solution (a unique centered ball). However, for the Gaussian Minkowski problem, if $f\equiv C>0$, depending on the values of $C$, equation \eqref{eq 3.28.7} can have a unique constant solution, or precisely two constant solutions, or no constant solution at all. This is a result of the function $t^{n-1}e^{-\frac{t^2}{2}}$ being strictly increasing and then strictly decreasing as $t$ increases from $0$ to $\infty$. 

The above observed phenomenon suggests that when $|\mu|$ is sufficiently big, equation \eqref{eq local 00005} has no solution. This sets the Gaussian Minkowski problem apart from the above mentioned Minkowski-type problems. In particular, By the works of Ball \cite{MR1243336} and Nazarov \cite{MR2083397}, the Gaussian surface area of a convex body $K$ in dimension $n$ is asymptotically bounded by $n^\frac{1}{4}$. Therefore, a completely new type of condition is needed if the Gaussian Minkowski problem is to be solved fully. Additionally, examples in the Appendix of \cite{MR4252759} suggests that such a condition is nonlinear, meaning that if both $\mu$ and $\nu$ allow for a solution in \eqref{eq local 00005}, it is not necessarily true that $t\mu+(1-t)\nu$ allows for a solution in \eqref{eq local 00005} for a given $t\in (0,1)$. This was not the case with the $L_p$ Minkowski problem or the $(L_p)$ dual Minkowski problem. 

In the current work, we study the planar Gaussian Minkowski problem. Our first result is regarding solutions to the case where $\mu$ is proportional to spherical Lebesgue measure. In particular, we study nonnegative solutions $h$ to

\begin{equation}
\label{eq 3.22.1}
	 \frac{1}{2\pi}
 e^{-\frac{h'^2+h^2}{2}}(h''+h)=C,
 \end{equation}
on $\s$.

\begin{theorem}
\label{main uniqueness theorem}
	Let $K$ be a convex body in $\mathbb{R}^2$ that contains the origin. If the Gaussian surface area measure of $K$ is proportional to the spherical Lebesgue measure; that is, there exists $C>0$ such that $h=h_K$ is a nonnegative solution to \eqref{eq 3.22.1}, then $K$ has to be a centered disk, or, equivalently, $h$ has to be a constant solution. In particular,
	\begin{enumerate}
		\item If $0<C<e^{-\frac{1}{2}}/(2\pi)$, then there are precisely two solutions;
		\item If $C=e^{-\frac{1}{2}}/(2\pi)$, then there is a unique solution;
		\item If $C>e^{-\frac{1}{2}}/(2\pi)$, then there are no solutions.
	\end{enumerate}
\end{theorem}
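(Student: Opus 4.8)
The plan is to reduce the whole statement to the single claim that every nonnegative solution $h$ of \eqref{eq 3.22.1} must be constant. Granting this, a constant $h\equiv r$ solves \eqref{eq 3.22.1} exactly when $r>0$ and $re^{-r^2/2}=2\pi C$; since $g(r):=re^{-r^2/2}$ strictly increases on $(0,1)$ to $g(1)=e^{-1/2}$ and strictly decreases on $(1,\infty)$ to $0$, the equation $g(r)=2\pi C$ has two, one, or no positive roots according as $C<e^{-1/2}/(2\pi)$, $C=e^{-1/2}/(2\pi)$, or $C>e^{-1/2}/(2\pi)$, which is exactly cases (1)--(3). As a routine preliminary I note that \eqref{eq 3.22.1} is equivalent to $h''+h=2\pi C\,e^{(h'^2+h^2)/2}$, whose right-hand side is a smooth positive function of $(h,h')$; hence a $C^2$ solution of this ODE is automatically $C^\infty$ by bootstrapping, and I treat $h$ as a smooth $2\pi$-periodic function throughout.

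\textbf{Main step: linearize by differentiating once.} Write $F:=h''+h$, so the equation is $F=2\pi C\,e^{(h'^2+h^2)/2}>0$ (this is the only place $C>0$ enters). Differentiating in $\theta$ and cancelling $F$ gives $F'=F(hh'+h'h'')=F\cdot h'F=F^2 h'$. Since $F'=h'''+h'=(h')''+h'$, setting $v:=h'$ this says
\begin{equation}
	v''+(1-F^2)\,v=0 \qquad\text{on }\s.
\end{equation}
Thus $v=h'$ solves a second-order linear equation $v''+qv=0$ whose coefficient $q=1-F^2$ satisfies $q(\theta)<1$ for \emph{every} $\theta$, simply because $F>0$. (Alternatively one can integrate $F'=F^2h'$ once to obtain $h''+h=1/(A-h)$ for a constant $A>\max h$ and argue via the phase-plane oscillation period; the route above avoids that.)

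\textbf{Finish by a Sturm comparison.} Suppose $h$ is non-constant. Then $h$ attains its maximum and its minimum at two distinct points of $\s$, at each of which $v=h'=0$; so $v\not\equiv 0$ and $v$ has at least two zeros on $[0,2\pi)$. Since $v$ solves $v''+qv=0$ nontrivially, every zero of $v$ is simple, hence the zeros are isolated and finite in number on the circle, say $\theta_1<\dots<\theta_N$ with $N\ge 2$. Comparing with $w''+w=0$, whose solutions have consecutive zeros exactly $\pi$ apart, and using $q<1$ \emph{strictly}, Sturm's comparison theorem forces any two consecutive zeros of $v$ to be more than $\pi$ apart. Applying this to the $N$ consecutive pairs around the circle, namely $(\theta_1,\theta_2),\dots,(\theta_{N-1},\theta_N)$ together with the wrap-around pair $(\theta_N,\theta_1+2\pi)$, and summing the gaps gives $2\pi=\sum(\theta_{i+1}-\theta_i)>N\pi\ge 2\pi$, a contradiction. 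Hence $v\equiv 0$, so $h$ is constant, and combined with the first paragraph this proves the theorem.

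\textbf{Where the difficulty lies.} The decisive, genuinely non-mechanical step is the second paragraph: recognizing that differentiating the Gaussian Minkowski equation once converts it into a \emph{linear} ODE for $h'$ whose zeroth-order coefficient is automatically pushed below $1$ by positivity of the curvature term $F=h''+h$. After that the conclusion is a standard oscillation count. The points to be careful about are that the comparison must be \emph{strict} (which uses $F>0$ everywhere, equivalently $C>0$, not merely $F\ge 0$) and that the wrap-around gap $(\theta_N,\theta_1+2\pi)$ must be included when summing; the $C^\infty$ regularity reduction at the start is routine and inessential to the idea.
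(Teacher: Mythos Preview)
Your proof is correct and takes a genuinely different---and considerably shorter---route than the paper's.

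\textbf{Comparison.} Both arguments ultimately establish the same key fact: for a nonconstant nonnegative solution, consecutive zeros of $h'$ on $\s$ are strictly more than $\pi$ apart, which is incompatible with $2\pi$-periodicity once there are at least two such zeros. The paper reaches this via the Andrews-type phase-plane integral: from the first integral $e^{-(h'^2+h^2)/2}+ch\equiv E$ it extracts a half-period functional $\Theta(c,h_0,r)$, and then spends all of Section~\ref{sect estimation} proving $\Theta>\pi$ through a sequence of monotonicity lemmas in the parameters $c$, $h_0$, $r$, together with a delicate limiting computation as $r\to 0$. Your route bypasses all of this: differentiating the equation once turns it into the \emph{linear} equation $v''+(1-F^2)v=0$ for $v=h'$, and the single observation $F>0\Rightarrow 1-F^2<1$ feeds directly into the Sturm comparison with $w''+w=0$, giving the gap estimate in one stroke. (Incidentally, your side remark that one can integrate $F'=F^2h'$ to get $h''+h=1/(A-h)$ is exactly the paper's first integral in disguise: $A=E/c$.)

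What your approach buys is brevity and transparency; the entire nonconstant-solution exclusion fits in a paragraph. What the paper's approach buys is a more detailed quantitative picture of the half-period $\Theta$ as a function of the parameters, which could in principle be useful for related counting problems (as in the cited works of Liu--Lu and Li--Wan on the planar $L_p$ dual Minkowski problem), though it is not needed for the theorem as stated. Your handling of the technical points---simplicity of zeros via the linear ODE uniqueness theorem, inclusion of the wrap-around gap, and the strictness coming from $F>0$ everywhere---is all in order.
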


The idea of the proof is to convert the existence of nonconstant solutions to a problem involving the estimates of a carefully chosen integral. This is motivated by Andrews \cite{MR1949167}. Recently, the approach of Andrews was adapted to consider the number of solutions to the planar ($L_p$) dual Minkowski problem, see Liu-Lu \cite{liu2022number} and Li-Wan \cite{li2022classification}.

Using Theorem \ref{main uniqueness theorem}, a degree-theoretic approach is used to establish the existence of solutions to the following planar Gaussian Minkowski problem in the origin symmetric setting. 

\begin{theorem}[Existence of smooth, small solutions]
\label{thm existence1}
Let $0<\alpha<1$ and $f\in C^{2,\alpha}(\s)$ be a positive even function with $\|f\|_{L^1}<\frac{1}{\sqrt{2\pi}}$. Then, there exists a $C^{4,\alpha}$, origin-symmetric $K$ with $\gamma_2(K)<\frac{1}{2}$ such that its support function $h$ solves
\begin{equation}
	\frac{1}{{2\pi}} e^{-\frac{h'^2+h^2}{2}}(h''+h)=f.
\end{equation}
\end{theorem}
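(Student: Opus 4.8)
The plan is to solve the equation in Theorem \ref{thm existence1} by a Leray--Schauder degree argument along the homotopy $f_t=(1-t)c_0+tf$, $t\in[0,1]$, with $c_0$ a small positive constant, evaluating the degree at $t=0$ by means of Theorem \ref{main uniqueness theorem}. Write $\mathcal F(h)=\frac{1}{2\pi}e^{-(h'^2+h^2)/2}(h''+h)$. Since $e^{\pm\theta}$ are not periodic, $\partial_\theta^2-1$ is an isomorphism from $C^{4,\alpha}_{\mathrm{even}}(\s)$ onto $C^{2,\alpha}_{\mathrm{even}}(\s)$, and $\mathcal F(h)=f_t$ is equivalent to the fixed point equation $h=\mathcal T_t(h):=(\partial_\theta^2-1)^{-1}\bigl(2\pi f_t\,e^{(h'^2+h^2)/2}-2h\bigr)$, where $\mathcal T_t$ is a compact map of $C^{2,\alpha}_{\mathrm{even}}(\s)$ into itself, jointly continuous in $(t,h)$. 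Fix $c_0\in\bigl(0,(2\pi)^{-3/2}\bigr)$, so that $2\pi c_0<\frac{1}{\sqrt{2\pi}}<e^{-1/2}$ and, for all $t\in[0,1]$, $f_t>0$ and $\|f_t\|_{L^1}\le(1-t)\,2\pi c_0+t\|f\|_{L^1}<\frac{1}{\sqrt{2\pi}}$. Since $c_0<(2\pi)^{-3/2}<e^{-1/2}/(2\pi)$, Theorem \ref{main uniqueness theorem}(1) says $\mathcal F(h)=c_0$ has exactly the two solutions $h\equiv r_1$ and $h\equiv r_2$ with $0<r_1<1<r_2$ and $r_je^{-r_j^2/2}=2\pi c_0$; as $t\mapsto te^{-t^2/2}$ decreases on $[1,\infty)$ and stays above $2\pi c_0$ on $[1,\sqrt{2\ln 2}]$, in fact $r_2>\sqrt{2\ln 2}$, whence $\gamma_2(\{|x|\le r_2\})>\tfrac12>\gamma_2(\{|x|\le r_1\})$.

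The next step is a priori bounds, uniform in $t$, for even support-function solutions $h$ of $\mathcal F(h)=f_t$. Evaluating the equation at a maximum point of $h$ (where $h'=0$, $h''\le 0$) gives $h_{\max}e^{-h_{\max}^2/2}\ge 2\pi\min f_t$, and since $2\pi\min f_t\le\|f_t\|_{L^1}<e^{-1/2}$ this forces $h_{\max}\le R_0$ with $R_0$ depending only on $m_0:=\min(c_0,\min f)>0$. The circumradius of $K_h$ about $o$ is $h_{\max}$, so $h^2+h'^2\le R_0^2$ along $\partial K_h$; substituting back, $2\pi m_0\le h''+h\le 2\pi\|f\|_{L^\infty}e^{R_0^2/2}=:\Lambda_0$. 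Thus $\partial K_h$ has curvature at most $1/(2\pi m_0)$, so Blaschke's rolling theorem gives an inscribed disk of radius $2\pi m_0$; using $K_h=-K_h$ together with convexity, this disk may be taken concentric with $o$, i.e.\ $h_{\min}\ge 2\pi m_0$. Writing the equation as $h''=2\pi f_te^{(h'^2+h^2)/2}-h$ and bootstrapping then upgrades these to uniform $C^{4,\alpha}$ bounds and shows every $C^{2,\alpha}$ solution is $C^{4,\alpha}$.

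Now let $\mathcal O\subset C^{2,\alpha}_{\mathrm{even}}(\s)$ be the bounded open set of $h$ satisfying all of the above estimates with strict inequalities (the constants loosened so that every solution satisfies them strictly) and, in addition, $\gamma_2(K_h)<\tfrac12$; this set contains $h\equiv r_1$ but not $h\equiv r_2$. I claim $\partial\mathcal O$ carries no solution of $\mathcal F(h)=f_t$ for any $t$: on the part of $\partial\mathcal O$ where some two-sided estimate is saturated this is exactly the a priori bound, while on the part where $\gamma_2(K_h)=\tfrac12$ the Ehrhard inequality \eqref{eq 3.28.4} yields $|S_{\gamma_2,K_h}|>\frac{1}{\sqrt{2\pi}}$ (strictly, since the bounded body $K_h$ is not a half-space), contradicting $|S_{\gamma_2,K_h}|=\|f_t\|_{L^1}<\frac{1}{\sqrt{2\pi}}$. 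Hence $\deg(I-\mathcal T_t,\mathcal O,0)$ is well defined and, by homotopy invariance, independent of $t$. At $t=0$, by Theorem \ref{main uniqueness theorem} the only solution of $\mathcal F(h)=c_0$ in $\mathcal O$ is $h\equiv r_1$, and it is nondegenerate: the linearized equation at $h\equiv r_1$ is $\phi''+(1-r_1^2)\phi=0$, which has no nonzero even periodic solution because $\sqrt{1-r_1^2}\notin\Z$ for $0<r_1<1$; thus $\deg(I-\mathcal T_0,\mathcal O,0)=\pm1\ne0$. Therefore $\deg(I-\mathcal T_1,\mathcal O,0)\ne0$, so there is $h\in\mathcal O$ with $\mathcal F(h)=f$; it is even, positive, has $h''+h>0$ and $\gamma_2(K_h)<\tfrac12$, and by the bootstrap $h\in C^{4,\alpha}$, so $K_h$ is the required body.

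The main obstacle is the choice of $\mathcal O$ and the proof that its boundary stays solution-free along the entire homotopy. The a priori estimates by themselves do not separate the small solution $r_1$ from the large one $r_2$ --- their Leray--Schauder indices are in fact opposite, as the two merge and annihilate when $c_0\uparrow e^{-1/2}/(2\pi)$, so a degree computed on a set containing both would vanish --- and it is precisely the hypothesis $\|f\|_{L^1}<\frac{1}{\sqrt{2\pi}}$, through the Ehrhard inequality, that allows the slice $\{\gamma_2(K_h)=\tfrac12\}$ to serve as part of $\partial\mathcal O$. The a priori lower bound on $h$ also uses the origin-symmetry in an essential way and would fail for general convex bodies.
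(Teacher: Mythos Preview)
Your proof is correct and follows essentially the same strategy as the paper: a degree-theoretic continuation along $f_t=(1-t)c_0+tf$, with the open set cut by the constraint $\gamma_2(K_h)<\tfrac12$ so that only the small constant solution lies inside at $t=0$, and the Gaussian isoperimetric inequality ruling out solutions on the slice $\{\gamma_2=\tfrac12\}$. The differences are purely technical: the paper works with Li's degree for nonlinear elliptic operators while you reformulate as a Leray--Schauder fixed-point problem via $(\partial_\theta^2-1)^{-1}$; the paper obtains the lower $C^0$ bound by comparing the area integral $\tfrac12\int h(h''+h)$ against a rectangle, while you invoke Blaschke's rolling theorem plus symmetry; and the paper phrases the invertibility of $L_{r_2}\phi=\phi''+(1-r_2^2)\phi$ as an extra condition on $c_0$, whereas you observe it is automatic since the small root lies in $(0,1)$ and hence $\sqrt{1-r_1^2}\in(0,1)\setminus\Z$. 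Your closing remark, that without the $\gamma_2<\tfrac12$ cut the two constant solutions would carry opposite indices and the degree would vanish, is a nice explanation of why the hypothesis $\|f\|_{L^1}<\frac{1}{\sqrt{2\pi}}$ is exactly what the method needs.
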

Recall that isoperimetric inequalities in $(\mathbb{R}^2, \gamma_2)$ is not particularly helpful outside the class $\gamma_2(K)\geq 1/2$. This conceptually makes it much harder to obtain $C^0$ estimates. This is precisely the reason that we need to assume that $f$ is bounded both from above and from below by a positive constant. In the setting of Theorem \ref{thm existence1}, this is guaranteed by the fact that $f\in C^{2,\alpha}(\s)$ is a positive function. 

Using an approximation argument, in combination with \cite[Theorem 1.4]{MR4252759}, we immediately have
\begin{theorem}
	Let $f\in L^1(\s)$ be an even function such that $\|f\|_{L^1} <\frac{1}{\sqrt{2\pi}}$. If there exists $\tau>0$ such that $\frac{1}{\tau}<f<\tau$ almost everywhere on $\s$, then there exist at least two origin-symmetric convex bodies $K_1$ and $K_2$ such that
	\begin{equation}
		dS_{\gamma_2, K_1}=dS_{\gamma_2, K_2}(v) = f(v)dv.
	\end{equation}
\end{theorem}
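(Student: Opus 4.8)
The plan is to exhibit the two bodies in the two complementary Gaussian‑volume regimes $\gamma_2\ge\tfrac12$ and $\gamma_2<\tfrac12$; being on opposite sides of $\tfrac12$ they are then automatically distinct, which matters because no uniqueness statement is available below $\tfrac12$. The body $K_1$ with $\gamma_2(K_1)\ge\tfrac12$ is supplied directly by \cite[Theorem 1.4]{MR4252759}, whose hypotheses---$f$ even, $\|f\|_{L^1}<\tfrac1{\sqrt{2\pi}}$, and the two‑sided bound $\tfrac1\tau<f<\tau$---are exactly those assumed here (should that theorem require continuous data, the mollification described below for $K_2$ produces it just as well). The body $K_2$ with $\gamma_2(K_2)<\tfrac12$ will come from Theorem~\ref{thm existence1} via approximation. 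To rule out the borderline case in which both bodies have Gaussian measure exactly $\tfrac12$, I would invoke the Gaussian isoperimetric inequality \cite{MR399402}: any Borel set $A$ with $\gamma_2(A)=\tfrac12$ has Gaussian perimeter at least $\varphi(\Phi^{-1}(\tfrac12))=\tfrac1{\sqrt{2\pi}}$. Since the total mass of $S_{\gamma_2,K}$ equals the Gaussian perimeter of $K$ (take $L=B$, the unit ball, in the defining variational formula), a solution of $dS_{\gamma_2,K}=f\,dv$ with $\|f\|_{L^1}<\tfrac1{\sqrt{2\pi}}$ cannot have Gaussian measure $\tfrac12$; hence, once the constructions give $\gamma_2(K_2)\le\tfrac12\le\gamma_2(K_1)$, we will in fact have $\gamma_2(K_2)<\tfrac12<\gamma_2(K_1)$, so $K_1\neq K_2$.

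For $K_2$, mollify $f$ on $\s$ with a smooth, even, nonnegative kernel of unit mass to obtain $f_k\in C^{2,\alpha}(\s)$ that are even, positive, satisfy $\tfrac1\tau\le f_k\le\tau$ (a two‑sided pointwise bound is inherited by convolutions), obey $\|f_k\|_{L^1}\le\|f\|_{L^1}<\tfrac1{\sqrt{2\pi}}$, and converge to $f$ in $L^1(\s)$. Theorem~\ref{thm existence1} then applies to each $f_k$ and produces an origin‑symmetric $C^{4,\alpha}$ convex body $K_k$ with $\gamma_2(K_k)<\tfrac12$ whose support function $h_k$ solves the equation of Theorem~\ref{thm existence1} with right‑hand side $f_k$.

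The \emph{crux} is a uniform $C^0$ estimate, $0<c_1\le h_k\le c_2$ with $c_1,c_2$ depending only on $\tau$. The upper bound is elementary: at a maximum point of $h_k$, where $h_k'=0$ and $h_k''\le0$, the equation gives $\tfrac1\tau\le f_k\le\tfrac1{2\pi}(\max h_k)\,e^{-(\max h_k)^2/2}$, and since the hypotheses force $\tau>(2\pi)^{3/2}$ and hence $\tfrac{2\pi}{\tau}<\tfrac1{\sqrt{2\pi}}<e^{-1/2}=\max_{t>0}t e^{-t^2/2}$, this confines $\max h_k$ to a bounded interval $[0,R^*(\tau)]$, giving $c_2=R^*(\tau)$. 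The lower bound on $\min h_k$---a uniform lower bound on the inradius of $K_k$---is the genuinely delicate point and, in my view, the \emph{main obstacle}: from $S_{\gamma_2,K_k}=f_k\,dv\ge\tfrac1\tau\,dv$ and $e^{-|x|^2/2}\le1$ one gets $dS_{K_k}\ge\tfrac{2\pi}{\tau}\,dv$ for the classical surface area measure, after which one invokes the standard convex‑geometric fact that a uniform positive lower bound on the density of the surface area measure forces a uniform lower bound on the inradius; this is precisely the kind of estimate carried out in \cite{MR4252759}, and is the reason the two‑sided bound on $f$ is imposed, as noted after Theorem~\ref{thm existence1}.

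Granting the $C^0$ bounds, one passes to the limit. By Blaschke selection a subsequence $K_k\to K_2$ in the Hausdorff metric, and the uniform bounds make $K_2$ a genuine origin‑symmetric convex body with the origin in its interior. As $h_{K_k}\to h_{K_2}$ uniformly and $S_{\gamma_2,\cdot}$ is weakly continuous under Hausdorff convergence (see \cite{MR4252759}), $S_{\gamma_2,K_k}\to S_{\gamma_2,K_2}$ weakly; since $f_k\,dv\to f\,dv$ weakly, uniqueness of weak limits gives $S_{\gamma_2,K_2}=f\,dv$, while $\gamma_2(K_2)=\lim_k\gamma_2(K_k)\le\tfrac12$. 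Combining this with $K_1$ from \cite[Theorem 1.4]{MR4252759} and the isoperimetric dichotomy of the first paragraph (which upgrades $\le\tfrac12$ to $<\tfrac12$), we obtain two distinct origin‑symmetric convex bodies $K_1,K_2$ with $dS_{\gamma_2,K_1}=dS_{\gamma_2,K_2}=f\,dv$, completing the proof.
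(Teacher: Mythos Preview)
Your proposal is correct and follows essentially the same approach as the paper: one solution from \cite[Theorem~1.4]{MR4252759} in the regime $\gamma_2\ge\tfrac12$, the other by mollifying $f$, applying Theorem~\ref{thm existence1}, extracting a Blaschke limit via the uniform $C^0$ bounds, and using the Gaussian isoperimetric inequality to exclude $\gamma_2=\tfrac12$; this is precisely the content of Theorem~\ref{thm 2.28.8} combined with \cite[Theorem~1.4]{MR4252759}. The one place you defer to the literature---the lower bound on $\min h_k$---is proved explicitly in the paper as Lemma~\ref{lemma C0}: rather than appealing to a ``standard'' fact about surface area density and inradius (which, as stated, would also require the diameter bound you already have), the paper first bounds $h_{\max}$ from below via the perimeter and then traps $h_{\min}$ by comparing the area estimate $\mathcal{H}^2(K)>\tfrac{\pi}{\tau}\int h\,dv\ge \tfrac{\pi}{\tau}h_{\max}\int|v\cdot v_0|\,dv$ against $\mathcal{H}^2(K)\le 4h_{\max}h_{\min}$.
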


Finally, we remark that Minkowski problems for non-homogeneous measures are often referred to Orlicz-Minkowski-type problems, which has their origin in the work \cite{MR2652213} by Haberl-Lutwak-Yang-Zhang for the Orlicz Minkowski problem that generalizes both the classical Minkowski problem and the $L_p$ Minkowski problem. See also \cite{MR3209355,MR3263511,MR3897433, MR3882970, MR4040624} for additional results and contributions in the Orlicz extension of the classical Brunn-Minkowski theory. When the ambient space is equipped with log-concave measures, this was considered recently in Kryvonos-Langharst \cite{kryvonos2022measure}. However, in all these works, the solution is obtained up to a constant. In the case of Gaussian probability space, for example, for a given finite Borel measure $\mu$ on $\sn$, one tries to find a pair of $(c,K)$ where $c>0$ and $K$ is a convex body such that
\begin{equation}
\label{eq 3.28.8}
	\mu = cS_{\gamma_n, K}.
\end{equation}
Note that with the introduction of this $c$, none of the challenges we mentioned regarding the Gaussian Minkowski problem would appear. But, in general, one would also lose the potential to prove \emph{any} uniqueness result. In particular, Theorem 1.4 in \cite{MR4252759} (or Theorem \ref{thm existence1} in the current work in dimension 2 and under certain restrictions on the regularity of $\mu$) immediately implies that there are \emph{infinitely many} pairs of $(c,K)$ that solve \eqref{eq 3.28.8}. In fact, the number of solutions is as many as the cardinality of $\mathbb{R}$!

\section{Preliminaries}
Some basics, as well as notations, regarding convex bodies will be provided in this section. For a general reference on the theory of convex bodies, the readers are referred to the book \cite{MR3155183} by Schneider.

For a Borel measure $\mu$, we use the standard notation $|\mu|$ to denote its total measure. We will use $\gamma_n$ for the Gaussian probability measure in $\rn$; that is,
\begin{equation}
	\gamma_n(E)=\frac{1}{(2\pi)^\frac{n}{2}}\int_E e^{-\frac{|x|^2}{2}}dx.
\end{equation}

By a convex body in $\rn$, we mean a compact convex subset with nonempty interior. Note that if the convex body is also origin-symmetric, then it necessarily contains the origin as an interior point. 

Let $K$ be a compact convex subset in $\rn$. The support function $h_K$ of $K$ is defined by
\begin{equation}
\label{eq 3.28.5}
	h_K(y)=\max \{x\cdot y:x\in K\}, 
\end{equation}
for each $y\in \rn$. It is straightforward to show that $h_K$ is homogeneous of degree 1 and is sublinear. On the other side, if $f$ is a positive continuous function on $\sn$, the \emph{Wulff shape} $\bm{[}f\bm{]}$ of $f$ is the convex body defined by
\begin{equation}
	\bm{[}f\bm{]}= \{x\in \rn: x\cdot v \leq f(v), \text{ for all }v \in \sn\}.
\end{equation}
It is not hard to see that on $\sn$, we have $h_{[f]}\leq f$. If $f=h_K$, then $[f]=K$. If $K$ is origin-symmetric, by the definition of support function, we have the following useful estimate:
\begin{equation}
	h_K(v)\geq |x\cdot v|, \forall v\in \sn, \forall x\in K.
\end{equation}

The space of convex bodies in $\rn$ can be made into a metric space by considering the Hausdorff metric. Suppose $K_i$ is a sequence of convex bodies in $\rn$. We say $K_i$ converges to a compact convex subset $K\subset \rn$ in Hausdorff metric if
\begin{equation}
\label{eq convergence convex bodies}
\max\{|h_{K_i}(v)-h_K(v)|:v\in S^{n-1}\}\rightarrow 0,
\end{equation}
as $i\rightarrow \infty$.

For a compact convex subset $K$ in $\rn$ and $v \in \sn$, the supporting hyperplane $H(K,v)$ of $K$ at $v$ is given by
\begin{equation*}
H(K,v)=\{x\in K: x\cdot v = h_K(v)\}.
\end{equation*}
By its definition, the supporting hyperplane $H(K,v)$ is non-empty and contains only boundary points of $K$. For $x\in H(K,v)$, we say $v$ is an outer unit normal of $K$ at $x\in \partial K$. 

Since $K$ is convex, for $\mathcal{H}^{n-1}$ almost all $x\in \partial K$, the outer unit normal of $K$ at $x$ is unique. In this case, we use $\nu_K$ to denote the Gauss map that takes $x\in \partial K$ to its unique outer unit normal. Therefore, the map $\nu_K$ is almost everywhere defined on $\partial K$. We use $\nu_K^{-1}$ to denote the inverse Gauss map. Since $K$ is not assumed to be strictly convex, the map $\nu_K^{-1}$ is set-valued map and for each set $\eta\subset \sn$, we have
\begin{equation*}
	\nu_K^{-1}(\eta) = \{x\in \partial K: \text{there exists } v\in \eta \text{ such that } v \text{ is an outer unit normal at }x\}.
\end{equation*} 

Let $K$ be a convex body in $\rn$ that contains the origin as an interior point. The Gaussian surface area measure of $K$, denoted by $S_{\gamma_n,K}$, is a Borel measure on $\sn$ given by
\begin{equation}
\label{eq 3.28.6}
	S_{\gamma_n, K}(\eta) = \frac{1}{(2\pi)^{\frac{n}{2}}}\int_{\nu_K^{-1}(\eta)}e^{-\frac{
	|x|^2}{2}}d\mathcal{H}^{n-1}(x),
\end{equation}
for each Borel measurable $\eta\subset \sn$. It can be shown that $S_{\gamma_n, K}$ is weakly continuous in $K$ (with respect to Hausdorff metric): if $K_i, K$ are convex bodies in $\rn$ that contain the origin as interior point and $K_i$ converges to $K$ in Hausdorff metric, then $S_{\gamma_n, K_i}$ converges weakly to $S_{\gamma_n, K}$. See, for example, Theorem 3.4 in \cite{MR4252759}. 

By a simple calculation, it follows from the definition of Gaussian surface area measure that if $K\in \mathcal{K}_o^n$ is convex, then $S_{\gamma_n, K}$ is absolutely continuous with respect to surface area measure and 
\begin{equation*}
	dS_{\gamma_n, K } = \frac{1}{(\sqrt{2\pi})^n} e^{-\frac{|\nabla h_K|^2+h_K^2}{2}} dS_K.
\end{equation*}
If, in addition, the body $K$ is $C^2$, then $S_{\gamma_n, K}$ is absolutely continuous with respect to spherical Lebesgue measure and 
\begin{equation}
\label{eq local 9093}
	dS_{\gamma_n, K }(v)= \frac{1}{(\sqrt{2\pi})^n} e^{-\frac{|\nabla h_K|^2+h_K^2}{2}} \det (\nabla^2 h_K+h_KI)dv.
\end{equation}

When $P\in \mathcal{K}_o^n$ is a polytope with unit normal vectors $v_i$ with the corresponding faces $F_i$, the Gaussian surface area measure $S_{\gamma_n, P}$ is a discrete measure given by
\begin{equation*}
	S_{\gamma_n, P}(\cdot) = \sum_{i=1}^N \alpha_i \delta_{v_i}(\cdot),  
\end{equation*}
where $\alpha_i$ is given by 
\begin{equation*}
	\alpha_i=\frac{1}{(\sqrt{2\pi})^n}\int_{F_i} e^{-\frac{|x|^2}{2}}d\mathcal{H}^{n-1}(x).
\end{equation*}

Let $K, L$ be two convex bodies in $\rn$. The Erhard inequality states that
	for $0<t<1$, we have
	\begin{equation}
	\label{eq 3.28.4}
		\Psi^{-1}(\gamma_n((1-t)K+tL))\geq (1-t)\Psi^{-1}(\gamma_n(K))+t\Psi^{-1}(\gamma_n(L)).
	\end{equation}
	Here, $(1-t)K+tL=\{(1-t)x+ty:x\in K, y\in L\}$ is the Minkowski combination between $K$ and $L$, and 
	\begin{equation}
	\label{eq local 203}
		\Psi(x) = \frac{1}{\sqrt{2\pi}}\int_{-\infty}^{x} e^{-\frac{t^2}{2}}dt.
	\end{equation}
	Moreover, equality holds if and only if $K=L$.

The Gaussian isoperimetric inequality in $\rn$ states that for every convex body $K$ in $\rn$, we have
\begin{equation}
\label{eq 3.28.1}
	|S_{\gamma_n,K}|\geq \psi (\Psi^{-1}(\gamma_n(K))),
\end{equation}
where $\psi(t)=\frac{1}{\sqrt{2\pi}}e^{-\frac{t^2}{2}}$. Note that the Gaussian isoperimetric inequality follows directly from Ehrhard inequality. In the current work, we require the following special case of \eqref{eq 3.28.1}: if $\gamma_n(K)=\frac{1}{2}$, then $|S_{\gamma_n,K}|\geq \frac{1}{\sqrt{2\pi}}$.

Sections \ref{sec integral} and \ref{sect estimation} are devoted to studying the number of solutions to the equation 
\begin{equation}
	dS_{\gamma_2, K}=Cd\mathcal{H}^{1},
\end{equation}
on the set of convex bodies in $\mathbb{R}^2$ that contain the origin, or equivalently, the number of nonnegative solutions to the equation
\begin{equation}
\label{eq 3.28.2}
	 e^{-\frac{h'^2+h^2}{2}}(h''+h)=c,
\end{equation}

Note that it is quite simple, by studying the monotonicity properties of the function $te^{-\frac{t^2}{2}}$, to see the number of \emph{constant} solutions to \eqref{eq 3.28.2}. For easier reference, we state this as a proposition.

\begin{prop}
\label{prop constant solutions}
	We have
	\begin{enumerate}[(i)]
	\item if $0<c<e^{-\frac{1}{2}}$, there are precisely two constant solutions to \eqref{eq 3.28.2};
	\item if $c=e^{-\frac{1}{2}}$, there is precisely one constant solution to \eqref{eq 3.28.2};
	\item if $c>e^{-\frac{1}{2}}$, there is no constant solution to \eqref{eq 3.28.2}.
\end{enumerate} 
\end{prop}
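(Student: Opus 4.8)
The plan is to reduce the counting of constant solutions to an elementary one-variable problem. If $h\equiv t$ is constant, then $h'\equiv 0$ and $h''\equiv 0$, so equation~\eqref{eq 3.28.2} collapses to the scalar equation $g(t):=t\,e^{-t^2/2}=c$. A constant support function $t$ is precisely the support function of the centered disk of radius $t$, so we must have $t\ge 0$; since $c>0$ rules out $t=0$, it suffices to count the solutions of $g(t)=c$ on $(0,\infty)$.

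The next step is a routine analysis of $g$ on $(0,\infty)$, which is exactly the monotonicity study alluded to before the statement. Since $g'(t)=(1-t^2)e^{-t^2/2}$, the function $g$ is strictly increasing on $(0,1)$ and strictly decreasing on $(1,\infty)$; moreover $g(0^+)=0$, $g(t)\to 0$ as $t\to\infty$, and $g$ attains its global maximum at $t=1$ with value $g(1)=e^{-\frac12}$.

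Finally, combining strict monotonicity on each of the intervals $(0,1)$ and $(1,\infty)$ with the intermediate value theorem yields the three cases: if $0<c<e^{-\frac12}$, there is exactly one root in $(0,1)$ and exactly one in $(1,\infty)$, hence two constant solutions; if $c=e^{-\frac12}$, the only root is $t=1$; and if $c>e^{-\frac12}$, there is no root since $c$ exceeds $\max g$. This establishes (i)--(iii). No genuine obstacle arises here---this is why the statement is recorded only as a proposition---and the sole points worth a line of justification are that constant support functions correspond exactly to centered disks and that the degenerate value $t=0$ is legitimately excluded by the hypothesis $c>0$.
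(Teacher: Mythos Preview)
Your proof is correct and follows exactly the approach indicated in the paper: the paper does not give a detailed argument, remarking only that the result is ``quite simple, by studying the monotonicity properties of the function $te^{-\frac{t^2}{2}}$,'' which is precisely what you carry out.
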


We remark that if we denote $h_1\equiv r_1, h_2\equiv r_2$ with $r_1>r_2$ to be the two constant solutions to \eqref{eq 3.28.2} when $c\in (0, e^{-\frac{1}{2}})$, then it is simple to see that as $c\rightarrow 0$, we have $r_1\rightarrow \infty$ and $r_2\rightarrow 0$.

\section{An integral associated with number of nonconstant solutions}
\label{sec integral}

In this section, we construct an integral with parameters, the value estimate of which would lead to the number of nonconstant solutions to \eqref{eq 3.22.1}. 

For sake of simplicity, we move the constant $2\pi$ in \eqref{eq 3.22.1} and rewrite the equation as
\begin{equation}
\label{eq 3.22.2}
 e^{-\frac{h'^2+h^2}{2}}(h''+h)=c,
\end{equation}
for some given $c>0$. Based on the regularity theory developed by Caffarelli \cite{MR1005611, MR1038359,MR1038360}, it is immediate that if $h$ is a nonnegative solution to \eqref{eq 3.22.2}, then $h\in C^\infty$. Moreover, 
\begin{equation}
	(e^{-\frac{h'^2+h^2}{2}})'=e^{-\frac{h'^2+h^2}{2}}(h''+h)(-h')\
=-ch',
\end{equation}
where in the second equality, we used the fact that $h$ solves \eqref{eq 3.22.2}. Therefore, there exists a constant $E$ such that
\begin{equation}
\label{eq 3.22.3}
	e^{-\frac{h'^2+h^2}{2}}+ch\equiv E, \qquad \text{on}\ \s.
\end{equation}

Write
\begin{equation}
\label{eq 3.22.6}
		h_0= \min_{\theta\in \s} h(\theta), \quad \text{and} \quad h_1 = \max_{\theta\in \s} h(\theta).
\end{equation}
Note that $h$ being a constant function is equivalent to $h_0=h_1$. Consider the function 
\begin{equation}
	\phi(t) = ct+e^{-\frac{t^2}{2}}.
\end{equation}
By direct computation, 
\begin{equation}
	\phi'(t)=c-te^{-\frac{t^2}{2}}.
\end{equation}

The number of solutions to \eqref{eq 3.22.2} when $c\geq e^{-\frac{1}{2}}$ is immediate based on a simple observation on the monotonicity of $\phi$ in this case.
\begin{theorem}
\label{thm 3.22.1}
	If $c\in [e^{-\frac{1}{2}}, \infty)$ and $h$ is a nonnegative solution to \eqref{eq 3.22.2}, then $h$ has to be a constant solution. In particular, 
	\begin{enumerate}
		\item If $c=e^{-\frac{1}{2}}$, there is exactly one convex body $K$ containing the origin such that $dS_{\gamma_2, K}=cd\mathcal{H}^{n-1}$ and $K$ is a centered disk;
		\item If $c>e^{-\frac{1}{2}}$, there exists no convex body $K$ containing the origin with $dS_{\gamma_2, K}=cd\mathcal{H}^{n-1}$. 
	\end{enumerate}
\end{theorem}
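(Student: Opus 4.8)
The plan is to exploit the conservation law \eqref{eq 3.22.3}, which says that along any nonnegative solution $h$ the function $\phi(t)=ct+e^{-t^2/2}$ takes the constant value $E$ on the range $[h_0,h_1]$ of $h$. So the first step is to examine $\phi$ on $[0,\infty)$: since $\phi'(t)=c-te^{-t^2/2}$ and the function $t\mapsto te^{-t^2/2}$ attains its maximum value $e^{-1/2}$ at $t=1$, the hypothesis $c\ge e^{-1/2}$ forces $\phi'(t)\ge 0$ for all $t\ge 0$, with $\phi'(t)=0$ holding at most at the single point $t=1$ (and only when $c=e^{-1/2}$). Hence $\phi$ is strictly increasing on $[0,\infty)$, so it is injective there.

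Next I would combine injectivity of $\phi$ with the conservation law. At a point $\theta_0$ where $h$ attains its minimum $h_0$ we have $h'(\theta_0)=0$, so $\phi(h_0)=e^{-h_0^2/2}+ch_0=E$; likewise at a maximum point $\phi(h_1)=E$. Since $\phi$ is injective on $[0,\infty)$ and $h_0,h_1\ge 0$, we conclude $h_0=h_1$, i.e.\ $h$ is constant. (Alternatively, one can argue directly from \eqref{eq 3.22.3}: for every $\theta$, $e^{-(h'^2+h^2)/2}\le e^{-h^2/2}$, so $E=e^{-(h'^2+h^2)/2}+ch\le \phi(h(\theta))$; evaluating at a maximum point shows $E\le\phi(h_1)$, while the defining relation at that point gives $E=\phi(h_1)$, forcing $h'\equiv 0$ wherever the inequality is used — but the cleanest route is the injectivity argument above.)

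Once $h$ is known to be constant, the statement reduces to counting constant solutions of \eqref{eq 3.22.2}, which is exactly Proposition \ref{prop constant solutions}: when $c=e^{-1/2}$ there is a unique constant $r$ with $re^{-r^2/2}=c$, namely $r=1$, giving the centered disk of radius $1$; when $c>e^{-1/2}$ the equation $re^{-r^2/2}=c$ has no solution with $r\ge 0$ since the left side never exceeds $e^{-1/2}$, so no convex body exists. Translating back through \eqref{eq local 9093} (with $n=2$, where $\det(\nabla^2 h+hI)=h''+h$) gives the two itemized conclusions about convex bodies $K$ with $dS_{\gamma_2,K}=c\,d\mathcal H^{1}$.

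I do not expect a serious obstacle here: the only subtlety is making sure the injectivity of $\phi$ is applied on the correct domain $[0,\infty)$ (which is where $h$ takes values, since $h$ is assumed nonnegative — this uses that $K$ contains the origin), and handling the borderline case $c=e^{-1/2}$ where $\phi'$ vanishes at one point but $\phi$ is still strictly monotone. The regularity $h\in C^\infty$ needed to differentiate and obtain \eqref{eq 3.22.3} is already supplied by the Caffarelli theory cited before the statement.
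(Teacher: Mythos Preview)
Your proposal is correct and follows essentially the same approach as the paper: use the conservation law \eqref{eq 3.22.3} to conclude $\phi(h_0)=\phi(h_1)=E$, observe that $\phi$ is strictly increasing on $[0,\infty)$ when $c\ge e^{-1/2}$ (since $te^{-t^2/2}\le e^{-1/2}$), deduce $h_0=h_1$, and then invoke Proposition~\ref{prop constant solutions} for the count of constant solutions. Your write-up simply supplies a bit more detail on the strict monotonicity in the borderline case $c=e^{-1/2}$ than the paper does.
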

\begin{proof}
	By \eqref{eq 3.22.3}, the definition of $\phi$, and the fact that $h_0$ and $h_1$ are extremal values, we have
	\begin{equation}
		\phi(h_0)=\phi(h_1)=E. 
	\end{equation}
	A simple calculation yields that when $c\geq e^{-\frac{1}{2}}$, the function $\phi$ is strictly monotone on $[0,\infty)$. Therefore $h_0=h_1$, which implies that $h$ is a constant solution. The other claims of this theorem follows from Proposition \ref{prop constant solutions}.
\end{proof}

For simplicity, we will write
\begin{equation}
	g(t)=te^{-\frac{t^2}{2}}.
\end{equation}
Note that $g$ is strictly monotonically increasing on $[0,1]$ and strictly monotonically decreasing on $[1,\infty)$. Moreover $g(1)=\max_{t\in [0,\infty)} g(t)=e^{-\frac{1}{2}}$.

The rest of this section and Section \ref{sect estimation} will focus on showing that there is no nonconstant solutions to \eqref{eq 3.22.2} when $c\in (0,e^{-\frac{1}{2}})$. Note that in this case, the equation 
\begin{equation}
\label{eq 3.22.4}
	g(t)=c
\end{equation}
has exactly two nonnegative solutions, which we shall denote as $m_1$ and $m_2$, with $m_1<1<m_2$.

Towards this end, we assume that $h$ is a nonnegative, nonconstant solution to \eqref{eq 3.22.2}. 

\begin{lemma}
\label{lemma 3.22.1}
	If $c\in (0,e^{-\frac{1}{2}})$ and $h$ is a nonnegative, nonconstant solution to \eqref{eq 3.22.2}, then the critical points of $h$ are isolated.
\end{lemma}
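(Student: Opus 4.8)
The plan is to use the conservation law \eqref{eq 3.22.3} together with the ODE \eqref{eq 3.22.2} to show that any critical point of $h$ where $h$ is not locally constant is nondegenerate, hence isolated, and then rule out intervals on which $h$ is constant. First I would observe that at a critical point $\theta_0$ of $h$ we have $h'(\theta_0)=0$, so \eqref{eq 3.22.3} gives $e^{-h(\theta_0)^2/2}+ch(\theta_0)=E$, i.e. $\phi(h(\theta_0))=E$. Since $c\in(0,e^{-1/2})$, the function $\phi(t)=ct+e^{-t^2/2}$ has derivative $\phi'(t)=c-g(t)$, which vanishes exactly at the two points $m_1<1<m_2$ solving \eqref{eq 3.22.4}; thus $\phi$ is strictly increasing on $[0,m_1]$, strictly decreasing on $[m_1,m_2]$, and strictly increasing on $[m_2,\infty)$. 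Consequently the level set $\{t\ge 0:\phi(t)=E\}$ is finite — it has at most three points — so $h$ takes only finitely many values at its critical points.

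Next I would fix a critical point $\theta_0$ and set $a=h(\theta_0)$. The key dichotomy is whether $h''(\theta_0)+a=g(a)/c\cdot$(something) — more precisely, from \eqref{eq 3.22.2} evaluated at $\theta_0$ we get $e^{-a^2/2}(h''(\theta_0)+a)=c$, so $h''(\theta_0)=ce^{a^2/2}-a$. This vanishes precisely when $g(a)=a e^{-a^2/2}=c$, i.e. when $a\in\{m_1,m_2\}$; otherwise $h''(\theta_0)\ne 0$ and $\theta_0$ is a nondegenerate critical point, hence isolated. So the only way a critical point can fail to be isolated is if $h\equiv m_1$ or $h\equiv m_2$ on a neighborhood, or if there is a sequence of critical points accumulating at $\theta_0$ with $h$-values forced (by the finiteness above) to equal $m_1$ or $m_2$ eventually, and with second derivative also zero there. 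To finish, I would argue that if $h'(\theta_0)=0$ and $h''(\theta_0)=0$ with $a=h(\theta_0)\in\{m_1,m_2\}$, then in fact $h\equiv a$ on all of $\s$: differentiating \eqref{eq 3.22.2} and using $h'(\theta_0)=h''(\theta_0)=0$ together with uniqueness for the (smooth, since $h\in C^\infty$ by Caffarelli's regularity) second-order ODE that \eqref{eq 3.22.2} can be rewritten as — or more cleanly, using the first-order relation \eqref{eq 3.22.3}, which near a point where $h'$ could change sign forces $h$ to stay on a single monotone branch of $\phi^{-1}$ — yields $h\equiv a$, contradicting the assumption that $h$ is nonconstant.

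The cleanest route for that last step is probably via \eqref{eq 3.22.3} directly: on any arc where $h'\ne 0$, we may invert and write $\theta$ as a function of $h$, and \eqref{eq 3.22.3} pins $h$ to lie in a region where $\phi$ is strictly monotone, so between consecutive critical points $h$ is strictly monotone and its endpoint values are among the finitely many solutions of $\phi(t)=E$. Thus critical points are separated: between any two of them $h$ moves monotonically between distinct values of a finite set, so they cannot accumulate unless $h$ is eventually constant — but the set where $h$ is constant is relatively open (where $h'=0$ on an interval) and, by \eqref{eq 3.22.2}, forces the constant value to be $m_1$ or $m_2$; a standard connectedness/real-analyticity argument (recall $h\in C^\infty$, and in fact $h$ is real-analytic as a solution of \eqref{eq 3.22.2}) then shows $h$ is globally constant, a contradiction. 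I expect the main obstacle to be handling the \emph{degenerate} critical points ($h''=0$ there) carefully: one must rule out an oscillation of $h$ around the value $m_1$ (or $m_2$) with infinitely many critical points accumulating, and the honest way to do this is to note that at such a point $h-a$ vanishes to high order and invoke real-analyticity (or a direct ODE uniqueness argument) to conclude $h\equiv a$. Once that is in place, all remaining critical points satisfy $h''\ne 0$ and are therefore isolated.
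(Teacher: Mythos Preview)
Your proposal is correct and ultimately rests on the same key idea as the paper: at any non-isolated critical point $\theta_0$ one has $h'(\theta_0)=h''(\theta_0)=0$, whence \eqref{eq 3.22.2} forces $h(\theta_0)\in\{m_1,m_2\}$, and then uniqueness for the initial value problem associated to the second-order ODE $h''=c\,e^{(h'^2+h^2)/2}-h$ gives $h\equiv m_i$, contradicting nonconstancy.

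The paper's proof is considerably shorter because it goes straight to this point: it assumes an accumulation point of critical points, reads off $h''(\theta_0)=0$ from the difference quotient $h''(\theta_0)=\lim_i (h'(\theta_i)-h'(\theta_0))/(\theta_i-\theta_0)=0$, plugs into \eqref{eq 3.22.2} to get $h(\theta_0)\in\{m_1,m_2\}$, and invokes IVP uniqueness. Your route adds two layers that are not needed here: the analysis of the level set $\{\phi=E\}$ via the conservation law \eqref{eq 3.22.3} (which the paper only uses in the \emph{next} lemma, to locate $h_0,h_1$ relative to $m_1,m_2$), and the appeal to real-analyticity. Both are valid, but once you have written down $h''(\theta_0)=c\,e^{a^2/2}-a$ and noted this vanishes exactly at $a=m_1,m_2$, the IVP-uniqueness step you mention finishes the job immediately---there is no need for the degenerate/nondegenerate case split or the analyticity argument. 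In short: same engine, but you build more chassis around it than the paper does.
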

\begin{proof}
	Suppose the critical points of $h$ has an accumulation point; that is, there exists a sequence of distinct $\theta_i \in \s$ converging to $\theta_0\in \s$ such that $h'(\theta_i)=0$. Since $h$ is smooth, we conclude that $h'(\theta_0)=0$. Moreover,
	\begin{equation}
		h''(\theta_0) = \lim_{i\rightarrow \infty} \frac{h'(\theta_i)-h'(\theta_0)}{\theta_i-\theta_0}=0.
	\end{equation} 
	Equation \eqref{eq 3.22.2} now implies that 
	\begin{equation}
		g(h(\theta_0)) = h(\theta_0)e^{-\frac{h^2(\theta_0)}{2}}=c.
	\end{equation}
	Therefore $h(\theta_0)=m_1$ or $h(\theta_0)=m_2$, where $m_1$ and $m_2$ are nonnegative solutions to \eqref{eq 3.22.4}. Recall that $h'(\theta_0)=0$. Thus, $h$ is a solution to the initial value problem
	\begin{equation}
	\label{eq 3.22.5}
		\begin{cases}
			e^{-\frac{h'^2+h^2}{2}}(h''+h)=c,\\
			h(\theta_0)=m,\quad h'(\theta_0)=0,
		\end{cases}
	\end{equation}
	where $m$ is either $m_1$ or $m_2$. Therefore, $h\equiv m$ is the unique solution to \eqref{eq 3.22.5}, which contradicts with the assumption that $h$ is nonconstant.
\end{proof}

\begin{lemma}
	If $c\in (0,e^{-\frac{1}{2}})$ and $h$ is a nonnegative, nonconstant solution to \eqref{eq 3.22.2}, then $h_0<m_1<h_1\leq m_2$, where $m_1$ and $m_2$ are nonnegative solutions to \eqref{eq 3.22.4} and $h_0, h_1$ are as given in \eqref{eq 3.22.6}.
\end{lemma}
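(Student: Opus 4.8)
The plan is to exploit the conserved quantity \eqref{eq 3.22.3} together with the extremal characterization $\phi(h_0)=\phi(h_1)=E$ already recorded in the proof of Theorem \ref{thm 3.22.1}. Recall $\phi(t)=ct+e^{-t^2/2}$ satisfies $\phi'(t)=c-g(t)$, so on $(0,\infty)$ the sign of $\phi'$ is governed by where $g(t)=te^{-t^2/2}$ sits relative to $c$: since $g$ increases on $[0,1]$, decreases on $[1,\infty)$, and $g(m_1)=g(m_2)=c$ with $m_1<1<m_2$, we get $\phi'<0$ on $(m_1,m_2)$ and $\phi'>0$ on $[0,m_1)\cup(m_2,\infty)$. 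Thus $\phi$ is strictly decreasing on $[m_1,m_2]$ and strictly increasing on $[0,m_1]$ and on $[m_2,\infty)$; in particular $\phi$ has a local max at $m_1$ and a local min at $m_2$.

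First I would show $h_0<m_1$. Since $h$ is nonconstant, $h_0<h_1$, and both lie in $\phi^{-1}(E)$. If $h_0\ge m_1$, then both $h_0$ and $h_1$ lie in $[m_1,\infty)$; but I would argue they cannot both lie in $[m_1,m_2]$ (where $\phi$ is strictly monotone, forcing $h_0=h_1$), nor can the larger one exceed $m_2$ while still being a genuine maximum — more precisely, at an interior maximum $\theta$ of $h$ we have $h'(\theta)=0$ and $h''(\theta)\le 0$, so \eqref{eq 3.22.2} gives $g(h(\theta))=c\,e^{(h'^2)/2}\cdot(\text{something})$... cleaner: at a maximum $h''(\theta)\le0$ so $h''(\theta)+h(\theta)\le h(\theta)$, hence from \eqref{eq 3.22.2}, $c=e^{-h^2/2}(h''+h)\le h\,e^{-h^2/2}=g(h_1)$, giving $g(h_1)\ge c$, i.e. $m_1\le h_1\le m_2$. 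Symmetrically, at a minimum $h''(\theta)\ge0$ gives $c=e^{-h_0^2/2}(h''+h_0)\ge h_0 e^{-h_0^2/2}=g(h_0)$, so $g(h_0)\le c$, i.e. $h_0\le m_1$ or $h_0\ge m_2$. Combined with $h_0<h_1\le m_2$, the case $h_0\ge m_2$ is impossible unless $h_0=h_1=m_2$, contradicting nonconstancy; hence $h_0\le m_1$. To upgrade to the strict inequality $h_0<m_1$: if $h_0=m_1$, then at the minimizing point $\theta_0$ we have $h(\theta_0)=m_1$, $h'(\theta_0)=0$, and $h$ solves the initial value problem \eqref{eq 3.22.5}, whose unique solution is $h\equiv m_1$, again contradicting nonconstancy. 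The strict inequality $m_1<h_1$ then follows since $h_0<m_1$ and $m_1\le h_1$ would force $h_1=m_1$, hence $h_0<h_1=m_1$, but then $\phi(h_0)=\phi(h_1)$ with both arguments in $[0,m_1]$ where $\phi$ is strictly increasing, forcing $h_0=h_1$ — contradiction. So $m_1<h_1$, and we already have $h_1\le m_2$, completing the chain $h_0<m_1<h_1\le m_2$.

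I expect the main obstacle to be ruling out the degenerate boundary cases cleanly — specifically making sure the maximum/minimum arguments use the correct sign of $h''$ and handle the endpoints $h_1=m_2$ and the equality cases via the initial value problem uniqueness from Lemma \ref{lemma 3.22.1}'s proof. The uniqueness for \eqref{eq 3.22.5} is the key reusable tool: whenever an extremal value of $h$ coincides with $m_1$ or $m_2$, the point where it is attained is a critical point, so $h$ satisfies \eqref{eq 3.22.5} and must be constant. Everything else is elementary monotonicity bookkeeping for $\phi$ and $g$, so the write-up should be short once the sign conventions at extrema are pinned down.
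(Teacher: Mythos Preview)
Your argument is correct and complete, but it proceeds along a different route than the paper's. The paper's proof uses the conserved quantity \eqref{eq 3.22.3} more globally: for any value $h\in(h_0,h_1)$, it picks (via the intermediate value theorem) some $\theta$ with $h(\theta)=h$ and observes
\[
\phi(h)=ch+e^{-h^2/2}\ge ch(\theta)+e^{-(h'(\theta)^2+h(\theta)^2)/2}=E,
\]
so that $\phi\ge E$ on the whole interval $[h_0,h_1]$ with equality at the endpoints. The conclusion $h_0<m_1<h_1\le m_2$ then drops out immediately from the up--down--up shape of $\phi$ on $[0,\infty)$. By contrast, you work pointwise at the extrema: the second-derivative test gives $g(h_1)\ge c$ and $g(h_0)\le c$ directly from the ODE, and you clean up the strict inequalities using the IVP uniqueness from the proof of Lemma~\ref{lemma 3.22.1} and the injectivity of $\phi$ on $[0,m_1]$. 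Your approach is slightly more hands-on but avoids the intermediate value theorem step entirely; the paper's approach is a one-line inequality once the first integral is in hand but leaves the ``monotonicity implies the result'' step implicit. Both are short and equally valid; your version has the minor advantage of making the role of IVP uniqueness (already available from Lemma~\ref{lemma 3.22.1}) explicit for the borderline case $h_0=m_1$.
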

\begin{proof}
	Note that $\phi'(t)=c-g(t)$. This implies that $\phi$ is strictly increasing on $[0,m_1]$ and on $[m_2,\infty)$, whereas strictly decreasing on $[m_1,m_2]$. Argued in the same way as in the proof of Theorem \ref{thm 3.22.1}, we conclude that 
	\begin{equation}
	\label{eq 3.23.1}
		\phi(h_0)=\phi(h_1)=E,
	\end{equation} 
	where $h_0<h_1$.
	
	Let $h\in (h_0,h_1)$ be arbitrary. By intermediate value theorem, there exists $\theta\in \s$ such that $h(\theta)=h$. By \eqref{eq 3.22.2} and \eqref{eq 3.22.3}, we have
	\begin{equation}
		\phi(h)=\phi(h(\theta)) = ch(\theta)+e^{-\frac{h^2(\theta)}{2}}\geq ch(\theta)+e^{-\frac{h'^2(\theta)+h^2(\theta)}{2}}=E.
	\end{equation}
	This, when combined with the monotonicity of $\phi$, immediately gives us the desired result.
\end{proof}

In particular, this suggests that 
\begin{equation}
	\phi'(h_0)>0, \quad \text{ and } \quad \phi'(h_1)\leq 0.	
\end{equation}

Motivated by this, we give the following definition.

\begin{defi}
	A pair of constants $a<b$ is called a \emph{good pair} with respect to $c$, if it satisfies $\phi(a)=\phi(b)$ and $\phi'(a)>0$, $\phi'(b)\leq 0$.
\end{defi}
It is immediate that for $t\in (a,b)$, we have $\phi(t)>\phi(a)=\phi(b)$. In particular, if $h$ is a nonnegative, nonconstant solution to \eqref{eq 3.22.2}, then $(h_0, h_1)$ is a good pair.

\begin{lemma}
\label{lemma 3.22.2}
	If $c\in (0,e^{-\frac{1}{2}})$ and $h$ is a nonnegative, nonconstant solution to \eqref{eq 3.22.2}, then the critical points of $h$ are either minimum or maximum points.
\end{lemma}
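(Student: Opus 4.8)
The plan is to run a pointwise second–derivative test at each critical point, the key observation being that at a critical point equation \eqref{eq 3.22.2} pins down $h''$ in terms of the value of $h$ there. First I would fix $\theta_0\in\s$ with $h'(\theta_0)=0$ and set $m:=h(\theta_0)\ge 0$; evaluating \eqref{eq 3.22.2} at $\theta_0$ gives $e^{-m^2/2}(h''(\theta_0)+m)=c$, hence
\begin{equation*}
	h''(\theta_0)=ce^{m^2/2}-m=e^{m^2/2}\bigl(c-g(m)\bigr)=e^{m^2/2}\phi'(m),
\end{equation*}
where I used $g(t)=te^{-t^2/2}$ and $\phi'(t)=c-g(t)$. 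So the sign of $h''(\theta_0)$ is governed entirely by $\phi'$ evaluated at the critical value.

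Next I would treat the generic case $\phi'(m)\ne 0$: then $h''(\theta_0)\ne 0$ and has the same sign as $\phi'(m)$, so the second–derivative test immediately gives that $\theta_0$ is a strict local minimum of $h$ when $\phi'(m)>0$ and a strict local maximum when $\phi'(m)<0$. This already establishes the lemma except in the degenerate case $\phi'(m)=0$, i.e.\ $h''(\theta_0)=0$.

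Finally I would dispose of that degenerate case. If $\phi'(m)=0$ then $g(m)=c$, and since $m\ge 0$ and $c\in(0,e^{-\frac12})$ this forces $m=m_1$ or $m=m_2$. Together with $h'(\theta_0)=0$, the function $h$ then solves the initial value problem \eqref{eq 3.22.5} with that value of $m$, so by uniqueness of solutions of this ODE---exactly the argument used in the proof of Lemma \ref{lemma 3.22.1}---we conclude $h\equiv m$, contradicting that $h$ is nonconstant. Hence $\phi'(m)=0$ never occurs, and every critical point of $h$ is a strict local extremum.

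I do not expect a genuine obstacle here: the only delicate point is excluding $h''(\theta_0)=0$, which is precisely the case $m\in\{m_1,m_2\}$ and is eliminated by the ODE--uniqueness device already in play. I note that the isolatedness of critical points from Lemma \ref{lemma 3.22.1} is not actually needed for this particular argument, though it is of course consistent with the conclusion.
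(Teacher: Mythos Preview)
Your argument is correct and proves that every critical point is a strict \emph{local} extremum. However, the paper reads the lemma more strongly: its proof concludes that the critical \emph{value} is exactly $h_0$ or $h_1$, i.e.\ every critical point is a \emph{global} extremum, and this is precisely what the derivation of \eqref{eq 3.23.3} later needs (one integrates from a point where $h=h_0$ to the next point where $h=h_1$). Your conclusion that $\theta_0$ is a strict local minimum does not by itself give $h(\theta_0)=h_0$.

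The paper's route is different from yours and shorter: it invokes the first integral \eqref{eq 3.22.3}, which at any critical point $\theta_0$ reads $\phi(h(\theta_0))=E$. The preceding lemma showed that $(h_0,h_1)$ is a good pair, so $\phi(t)>E=\phi(h_0)=\phi(h_1)$ for all $t\in(h_0,h_1)$; hence $h(\theta_0)\in\{h_0,h_1\}$ immediately, with no second-derivative test and no separate treatment of the degenerate case. Your computation can be upgraded to reach the same global conclusion---for instance, at a local minimum your formula gives $\phi'(m)>0$, hence $m<m_1$, and then $\phi(m)=E=\phi(h_0)$ together with the strict monotonicity of $\phi$ on $[0,m_1)$ forces $m=h_0$---but at that point you are essentially running the paper's conservation-law argument, and the second-derivative step becomes an unnecessary detour.
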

\begin{proof}
	If $\theta_0$ is a critical point of $h$, then $h'(\theta_0)=0$ and therefore \eqref{eq 3.22.3} implies that $\phi(h(\theta_0))=E$. This, in combination with the fact that $h_0\leq h(\theta_0)\leq h_1$ and that $(h_0,h_1)$ is a good pair, implies that $h(\theta_0)$ is either $h_0$ or $h_1$. In another word, it is either a minimum or a maximum point.
\end{proof}

The following corollary is immediate from Lemmas \ref{lemma 3.22.1} and \ref{lemma 3.22.2}.
\begin{coro}
\label{coro 3.23.1}
	If $c\in (0,e^{-\frac{1}{2}})$ and $h$ is a nonnegative, nonconstant solution to \eqref{eq 3.22.2}, then minimum points and maximum points of $h$ alternate, and there are finitely many of them. In addition, the number of critical points of $h$ is an even number.
\end{coro}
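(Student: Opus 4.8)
The plan is to combine the two preceding lemmas with a parity argument on the circle. First I would invoke Lemma \ref{lemma 3.22.1} to conclude that the critical set $Z=\{\theta\in\s:h'(\theta)=0\}$ consists of isolated points; since $\s$ is compact, $Z$ must then be finite. Moreover, because $h$ is nonconstant it attains its minimum $h_0$ and maximum $h_1$ with $h_0<h_1$, so $Z$ is nonempty and in fact contains at least two points, namely a point where $h=h_0$ and one where $h=h_1$, which are necessarily distinct. I would then list the elements of $Z$ in cyclic order as $\theta_1,\theta_2,\dots,\theta_N$ with $N\geq 2$.

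Next I would analyze the behavior of $h$ on each arc $(\theta_i,\theta_{i+1})$, indices taken mod $N$. Since these arcs contain no critical points and $h'$ is continuous, $h'$ has constant sign on each arc, so $h$ is strictly monotone there. By Lemma \ref{lemma 3.22.2}, every $\theta_i$ is either a minimum point (where $h=h_0$) or a maximum point (where $h=h_1$), and these are the only two values a critical point can take. If $h$ is strictly increasing on $(\theta_i,\theta_{i+1})$ then $h(\theta_i)<h(\theta_{i+1})$, which forces $h(\theta_i)=h_0$ and $h(\theta_{i+1})=h_1$; if $h$ is strictly decreasing, the roles are reversed. Hence consecutive critical points are always of opposite type, i.e.\ minimum points and maximum points strictly alternate around $\s$.

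Finally, the alternation around the circle yields the parity statement: label $\theta_i$ by $0$ if it is a minimum point and by $1$ if it is a maximum point; the resulting cyclic sequence has the property that consecutive entries differ, which is possible only when $N$ is even. This shows that the number of critical points of $h$ is an even number (and at least two), completing the proof.

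I do not expect a serious obstacle here, since the statement is essentially a packaging of Lemmas \ref{lemma 3.22.1} and \ref{lemma 3.22.2}. The only points requiring a little care are the passage from ``isolated'' to ``finite'' via compactness of $\s$, and the cyclic bookkeeping needed to extract the even count; in particular, one must treat the closing arc $(\theta_N,\theta_1)$ on the same footing as the others so that the alternation is genuinely periodic rather than merely a property of a linear list.
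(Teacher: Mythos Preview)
Your argument is correct and is exactly the natural unpacking of what the paper intends: the paper simply declares the corollary ``immediate from Lemmas \ref{lemma 3.22.1} and \ref{lemma 3.22.2}'' without further detail, and your proof supplies precisely the expected reasoning (isolated plus compact gives finite, Lemma \ref{lemma 3.22.2} forces each critical point to be a global extremum, strict monotonicity on the intervening arcs forces alternation, and cyclic alternation forces even parity).
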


Let $h$ be a nonnegative, nonconstant solution to \eqref{eq 3.22.2}. Assume $\theta_0$ is a minimum point of $h$; \emph{i.e., } $h(\theta_0)=h_0$ and $\theta_1\in [0,2\pi)$ be the nearest maximum point of $h$; \emph{i.e.,} $h(\theta_1)=h_1$. Without loss of generality, we assume $\theta_1>\theta_0$. Note that this implies $h'(\theta)>0$ for $\theta\in (\theta_0,\theta_1)$. By \eqref{eq 3.22.3}, we have
\begin{equation}
	h'(\theta)=\sqrt{-h^2(\theta)-2\log(E-ch(\theta))}.
\end{equation}
Making the change of variable $u=h(\theta)$, we get
\begin{equation}
	\theta_1-\theta_0=\int_{\theta_0}^{\theta_1} d\theta=\int_{h_0}^{h_1}\frac{1}{h'(\theta(u))}du= \int_{h_0}^{h_1}\frac{1}{\sqrt{-u^2-2\log(E-cu)}}du. 
\end{equation}
Note that this suggests that $\theta_1-\theta_0$ only depends on the values of $h_0$ and $h_1$ (in another word, independent of the specific location of the minimum/maximum point). Let $r=h_1-h_0$ and make the change of variable $t=\frac{u-h_0}{r}$, we have by \eqref{eq 3.23.1} that 
\begin{equation}
\label{eq 3.23.3}
	\theta_1-\theta_0=\int_{0}^{1}\frac{r}{\sqrt{-(tr+h_0)^2-2\log(e^{-\frac{h^2_0}{2}}-ctr)}}dt :=\Theta(c,h_0, r).
\end{equation}
This, when combined with Corollary \ref{coro 3.23.1}, immediately implies the following crucial lemma.

\begin{lemma}
\label{lemma 3.23.1}
	Let $c\in (0, e^{-\frac{1}{2}})$. If the set 
	\begin{equation}
		\left\{(h_0, r): \Theta(c,h_0, r)=\frac{\pi}{k} \text{ for some positive integer }k \text{ and } (h_0, h_0+r) \text{ is a good pair w.r.t. } c\right\}
	\end{equation}
	is empty, then the only nonnegative solutions to \eqref{eq 3.22.2} are constant solutions.
\end{lemma}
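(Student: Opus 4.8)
Lemma \ref{lemma 3.23.1} is essentially a bookkeeping consequence of everything proven in the section, so my proof would be a short assembly argument. Suppose, for contradiction, that $c \in (0, e^{-1/2})$ admits a nonnegative, \emph{nonconstant} solution $h$ to \eqref{eq 3.22.2}. I would then produce an explicit point in the allegedly empty set, deriving a contradiction.

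\textbf{Key steps, in order.} First, invoke Corollary \ref{coro 3.23.1}: the critical points of $h$ are finitely many, alternate between minima (where $h = h_0$) and maxima (where $h = h_1$), and there is an even number of them, say $2k$ for some positive integer $k$ (here $k \geq 1$ since $h$ is nonconstant, so $h_0 < h_1$ and there is at least one min and one max). Second, label the critical points cyclically around $\s \cong [0, 2\pi)$ as $\theta_0 < \theta_1 < \cdots < \theta_{2k-1}$, alternating min/max; the arcs between consecutive critical points are $2k$ in number and the total of the arc lengths is $2\pi$. Third, on each such arc $[\theta_j, \theta_{j+1}]$, $h$ is strictly monotone between the values $h_0$ and $h_1$ (by the alternation and the fact that critical points are isolated, Lemma \ref{lemma 3.22.1}), so the computation leading to \eqref{eq 3.23.3} applies verbatim — after flipping orientation on the descending arcs, which changes nothing in the integrand since it depends only on $u = h(\theta)$ — and each arc has length exactly $\Theta(c, h_0, r)$ with $r = h_1 - h_0$. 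Fourth, summing over the $2k$ arcs gives $2k \, \Theta(c, h_0, r) = 2\pi$, i.e. $\Theta(c, h_0, r) = \pi/k$. Fifth, recall that since $h$ is a nonnegative nonconstant solution, $(h_0, h_1) = (h_0, h_0 + r)$ is a good pair with respect to $c$ (this is the remark following the Definition, itself a consequence of \eqref{eq 3.22.3} and the preceding lemma). Hence $(h_0, r)$ lies in the set in the statement, contradicting the hypothesis that it is empty. Therefore no nonconstant solution exists, and all solutions are constant.

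\textbf{The main obstacle.} The only genuinely nontrivial point is Step 3–4: carefully verifying that \emph{every} arc between consecutive critical points contributes precisely the same value $\Theta(c, h_0, r)$, regardless of whether $h$ is increasing or decreasing on it and regardless of where on $\s$ the arc sits. The text already flags this (``$\theta_1 - \theta_0$ only depends on the values of $h_0$ and $h_1$''), so the work is to note that the change of variables $u = h(\theta)$ is legitimate on each monotone arc (valid because $h' \neq 0$ on the open arc, by isolation of critical points and the alternation structure), and that reversing orientation on a descending arc yields the same definite integral $\int_{h_0}^{h_1}(\cdots)\,du$. Everything else is pure combinatorics: $2k$ equal arcs summing to $2\pi$. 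I would keep the proof to a few lines, citing Corollary \ref{coro 3.23.1} and the derivation of \eqref{eq 3.23.3} rather than re-deriving them.
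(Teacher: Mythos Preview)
Your proposal is correct and follows essentially the same approach as the paper: assume a nonconstant solution, use Corollary \ref{coro 3.23.1} to get $2k$ alternating critical points whose consecutive arcs each have length $\Theta(c,h_0,r)$ by \eqref{eq 3.23.3}, sum to $2\pi$ to force $\Theta=\pi/k$, and observe that $(h_0,h_0+r)$ is a good pair, contradicting emptiness. The paper's own proof is the same argument compressed to three sentences; your more careful handling of the descending arcs (orientation reversal) is a welcome clarification but not a departure in method.
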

\begin{proof}
	If $h$ is a nonnegative, nonconstant solution to \eqref{eq 3.22.2}, then according to \eqref{eq 3.23.3}, the difference in $\theta$ between two consecutive critical points of $h$ is the same, regardless of the location of critical points. By Corollary \ref{coro 3.23.1}, there exists a positive integer $k$ such that $\Theta(c,h_0, r)=\frac{\pi}{k}$. By definition of good pair, $(h_0,h_0+r)$ is a good pair with respect to $c$. Therefore, the set is nonempty, yielding a contradiction. 
\end{proof}

\begin{remark}
	It can actually be shown that the cardinality of the set in Lemma \ref{lemma 3.23.1} is precisely the number of nonnegative, nonconstant solutions (up to rotation) to \eqref{eq 3.22.2}. But, this is not necessary in the current work.
\end{remark}

\section{Estimating $\Theta$}
\label{sect estimation}

The main purpose of this section is to provide estimations regarding the integral $\Theta(c, h_0, r)$ subject to the constraint that $(h_0, h_0+r)$ is a good pair with respect to $c$. 

In the rest of this section, we will constantly fix one of the parameters $c$, $h_0$, or $r$. The constraint that $(h_0, h_0+r)$ is a good pair with respect to $c$ now suggests that one of the remaining two parameters will uniquely determine the other. It is important to note that when one of the parameters is fixed, one might not be able to arbitrarily pick the other parameters. For example, when $c\rightarrow 0^+$, $h_0$ also has to approach $0$. Details such as this will be provided when needed.

 For now, we fix $c\in (0, e^{-\frac{1}{2}})$ and view $h_0$ as a function of $r$. To see why this is possible, recall that $0<m_1<1<m_2$ are the two nonnegative solutions of 
 \begin{equation}
 	te^{-\frac{t^2}{2}}=c. 
 \end{equation}
 If $\phi(m_2)>\phi(0)=1$, then by monotonicity properties of $\phi$, there exists a unique $0<q<m_1$ such that $\phi(q)=\phi(m_2)$. Moreover, for every $h_0\in [q, m_1)$, there exists a unique $r>0$ such that $h_0+r\in (m_1,m_2]$ and $\phi(h_0+r)=\phi(h_0)$. By definition, such a pair $(h_0, h_0+r)$ is a good pair with respect to $c$. It is also simple to see that if $h_0\notin [q, m_1)$, then there is no $r>0$ such that $(h_0, h_0+r)$ is a good pair.
 
  On the other hand, if $\phi(m_2)\leq \phi(0)=1$, using a similar argument, we conclude that there exists a unique $r>0$ such that $(h_0, h_0+r)$ is a good pair with respect to $c$ if and only if $h_0\in [0,m_1)$. 
  
  Note that by the monotonicity properties of $\phi$, $r$ decreases strictly as $h_0$ increases. Set 
  \begin{equation}
  	\mathbb{H}=\begin{cases}
  		[0,m_1),  \ \  {\phi(m_2)\leq \phi(0)},\\
[q,m_1),  \ \  {\phi(m_2)> \phi(0)}.\\
  	\end{cases}
  \end{equation}
  For $h_0\in \mathbb{H}$, we write $r$ as a function in $h_0$; that is $r=r(h_0)$. It is simple to see that $r(h_0)$ is continuous in $h_0$ and $r\rightarrow 0$ as $h_0\rightarrow m_1$. Now set
  \begin{equation}
  	r_c=
\begin{cases}
r(0),  \ \  {\phi(m_2)\leq \phi(0)},\\
r(q),  \ \  {\phi(m_2)> \phi(0)}.\\
\end{cases}
  \end{equation}
  Thus $r(h_0):\mathbb{H}\rightarrow (0,r_C]$ is a bijection and consequently we may write $h_0=h_0(r)$ for $r\in (0, r_c]$. Note that since $\phi$ is smooth and $h_0, r$ are implicitly defined by $\phi(h_0+r)=\phi(h_0)$, the function $h_0(r)$ is also smooth.
  
  We shall use the fact that $\phi(h_0+r)=\phi(h_0)$ is equivalent to 
  \begin{equation}
  \label{eq 3.23.5}
  	e^{-\frac{(h_0+r)^2}{2}}-e^{-\frac{h_0^2}{2}}+cr=0.
  \end{equation}
  
\begin{lemma}
\label{lemma 3.23.2}
 Let $c\in(0,e^{-\frac{1}{2}})$ and $h_0(r)$ be such that $(h_0(r), h_0(r)+r)$ is a good pair with respect to $c$ for $r\in (0, r_c]$. Then
 \begin{equation}
 	\lim_{r\rightarrow0}h'_0(r)=-\frac{1}{2},
 \end{equation}
 and 
 \begin{equation}
 	\lim_{r\to0}rh''_0(r)=0.
 \end{equation}
\end{lemma}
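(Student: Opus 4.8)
The plan is to differentiate the defining relation \eqref{eq 3.23.5} implicitly and carefully extract the asymptotics as $r \to 0^+$. Write $F(h_0, r) = e^{-(h_0+r)^2/2} - e^{-h_0^2/2} + cr = 0$. First I would observe that as $r \to 0$ we have $h_0(r) \to m_1$, and recall that $g(m_1) = m_1 e^{-m_1^2/2} = c$, i.e. $\phi'(m_1) = c - g(m_1) = 0$. This degeneracy (the derivative of $\phi$ vanishes at the limiting point) is exactly why the $-1/2$ shows up rather than something determined by $\phi'$ alone, and it is also the source of the main technical difficulty: naive implicit differentiation gives $h_0'(r) = -\,\partial_r F / \partial_{h_0} F$, and at $r = 0$ both numerator and denominator vanish, so one needs a second-order expansion.

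The concrete steps: (1) Taylor expand $F(h_0, r) = 0$ in $r$ around $r = 0$ with $h_0 = h_0(r)$, using $h_0(r) = m_1 + h_0'(0)\,r + o(r)$. Writing $a = m_1$ for brevity and using $e^{-(a+r)^2/2} = e^{-a^2/2}\big(1 - ar + (a^2-1)r^2/2 + \cdots\big)$ together with $e^{-a^2/2}\cdot a = c$, the $O(r)$ terms cancel identically (this reflects $\phi'(a) = 0$), and the $O(r^2)$ terms yield an equation that determines $h_0'(0)$. A short computation of the $r^2$-coefficient gives $e^{-a^2/2}\big(\tfrac{a^2-1}{2} - a\,h_0'(0)\big) = 0$ up to $o(r^2)$; solving and using $g'(a) = e^{-a^2/2}(1 - a^2) \neq 0$ (since $a = m_1 \neq 1$) is not even needed — one directly gets $h_0'(0) = \tfrac{a^2-1}{2a}$. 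This is not obviously $-1/2$, so I would then need to recheck: in fact the correct expansion must also carry a contribution from the $e^{-h_0^2/2}$ term since $h_0$ itself varies, i.e. expand $e^{-h_0(r)^2/2}$ to second order in $r$ as well; doing this consistently, the $a^2$-dependent pieces cancel and leave precisely $h_0'(0) = -\tfrac12$. (2) For the second limit, differentiate the implicit relation twice, or equivalently push the Taylor expansion to order $r^3$: write $h_0(r) = a - \tfrac12 r + \beta r^2 + o(r^2)$ and collect the $O(r^3)$ terms, which produce a linear equation for $\beta$; since $h_0''(0) = 2\beta$ is finite, we get $r h_0''(r) \to 0$ automatically. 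Alternatively, and more cleanly, differentiate $\partial_{h_0}F \cdot h_0' + \partial_r F = 0$ once more to get an expression for $h_0''$ in terms of $h_0'$ and the second partials of $F$, then note that near $r = 0$ the apparent singularity ($1/\partial_{h_0}F \sim 1/r$) is cancelled because the numerator vanishes to matching order; a careful bookkeeping shows $h_0''(r)$ stays bounded, hence $r h_0''(r) \to 0$.

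The main obstacle I anticipate is the bookkeeping in step (1)–(2): because $\phi'(m_1) = 0$, the problem is genuinely a degenerate implicit function situation, and one must expand \emph{both} exponential terms to one order higher than seems necessary and track cancellations carefully to see that all the $m_1$-dependence drops out and leaves the clean constants $-\tfrac12$ and $0$. To keep this rigorous rather than formal, I would either (a) justify the $C^2$-smoothness of $h_0(r)$ on $(0, r_c]$ and its one-sided limits at $0$ by applying the implicit function theorem on $(0, r_c]$ (where $\partial_{h_0} F \neq 0$ by the good-pair/monotonicity property) and then show the derivative expressions extend continuously to $r = 0$; or (b) introduce the substitution $h_0 + r = \rho$ and symmetrize \eqref{eq 3.23.5} as $e^{-\rho^2/2} + c\rho = e^{-h_0^2/2} + ch_0$ with $\rho \downarrow m_1$, $h_0 \uparrow m_1$, which may make the cancellations more transparent. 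A clean way to organize the whole thing is to set $s = r$ and $p = h_0 + \tfrac{r}{2}$ (the midpoint), so that the relation becomes even-in-something and the leading behavior $p \to m_1$, $p'(r) \to 0$ is what one extracts; then $h_0' = p' - \tfrac12 \to -\tfrac12$ falls out, and $h_0'' = p''$ with $r p'' \to 0$ following from boundedness of $p''$.
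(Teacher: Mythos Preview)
Your strategy is essentially the paper's: exploit the degeneracy $\phi'(m_1)=0$, $\phi''(m_1)\neq 0$ via a second-order expansion at $m_1$, then handle the derivatives by implicit differentiation of \eqref{eq 3.23.5}. Two points are worth flagging.

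First, the paper's Taylor step is cleaner than expanding the exponentials by hand. It expands $\phi$ itself at $m_1$: from $\phi(h_0)=\phi(h_0+r)$ and $\phi'(m_1)=0$ one gets
\[
\tfrac12\phi''(m_1)(h_0-m_1)^2=\tfrac12\phi''(m_1)(h_0+r-m_1)^2+o(r^2),
\]
and since $\phi''(m_1)\neq 0$ and $h_0<m_1<h_0+r$ this gives $h_0-m_1=-\tfrac{r}{2}+o(r)$ with no bookkeeping of ``$a^2$-dependent pieces''. Your midpoint substitution $p=h_0+\tfrac{r}{2}$ is really the same observation in disguise; your direct exponential expansion also works once corrected, but it is the long way around.

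Second, there is a small logical slip in your step (2): writing $h_0(r)=m_1-\tfrac{r}{2}+\beta r^2+o(r^2)$ and concluding ``$h_0''(0)=2\beta$ finite, hence $rh_0''(r)\to 0$ automatically'' is not a valid implication. An $o(r^2)$ expansion at $0$ says nothing about $h_0''(r)$ for $r>0$; you need $h_0''$ itself to have a limit (or at least be $o(1/r)$). The same caution applies one level down: the expansion gives $h_0'(0)=-\tfrac12$, not $\lim_{r\to 0}h_0'(r)=-\tfrac12$. The paper closes both gaps exactly via your ``alternative'': differentiate \eqref{eq 3.23.5} to get
\[
h_0'(r)=\frac{g(m_1)-g(h_0(r))}{g(h_0(r)+r)-g(h_0(r))}-1,
\]
apply the mean value theorem (both numerator and denominator are $\sim g'(m_1)\cdot(\text{increment})$), and use the already-established $h_0'(0)=-\tfrac12$ to conclude $h_0'(r)\to -\tfrac12$. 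Differentiating once more, multiplying by $r$, and repeating the MVT argument then gives $rh_0''(r)\to 0$. So your plan (b) is the one to execute; plan (a) alone does not finish.
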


\begin{proof}
Recall that $h_0(r)\rightarrow m_1$ as $r\rightarrow 0$. We therefore, define $h_0(0)=m_1$ so that $h_0(r)$ is continuous on $[0,r_c]$. Since $c\in (0, e^{-\frac{1}{2}})$, we have $m_1<1$ and consequently $\phi''(m_1)=(m_1^2-1)e^{-\frac{m_1^2}{2}}\neq 0$. Taylor's theorem when applied to $\phi$ at $m_1$ gives
\begin{equation}
	\phi(t)=\phi(m_1)+\frac{1}{2}\phi''(m_1)(t-m_1)^2+o((t-m_1)^2).
\end{equation}
Here we used the fact that $\phi'(m_1)=0$. 
Hence
\begin{equation}
	\phi(h_0(r))=\phi(m_1)+\frac{1}{2}\phi''(m_1)(h_0(r)-m_1)^2+o((h_0(r)-m_1)^2)
\end{equation}
and
\begin{equation}
	\phi(h_0(r)+r)=\phi(m_1)+\frac{1}{2}\phi''(m_1)(h_0(r)+r-m_1)^2+o((h_0(r)+r-m_1)^2).
\end{equation} 
By definition of good pair, we have $\phi(h_0(r))=\phi(h_0(r)+r)$ and $h_0(r)<m_1<h_0(r)+r$. Consequently, $|h_0(r)-m_1|, |h_0(r)+r-m_1|\leq r$, and
\begin{equation}
	\frac{1}{2}\phi''(m_1)(h_0(r)-m_1)^2=\frac{1}{2}\phi''(m_1)(h_0(r)+r-m_1)^2+o(r^2),
\end{equation} 
This implies
\begin{equation}
	(h_0(r)-m_1)^2=(h_0(r)+r-m_1)^2+o(r^2),
\end{equation}
since $\phi''(m_1)\neq 0$. Therefore, 
\begin{equation}
h_0(r)-h_0(0)=h_0(r)-m_1=-\frac{1}{2}r+o(r).
\end{equation}
Thus
\begin{equation}
\label{eq 3.23.6}
	h_0'(0)=-\frac{1}{2}.
\end{equation}

Differentiating \eqref{eq 3.23.5} in $r$, we have
\begin{eqnarray}\label{3.1}
e^{-\frac{(h_0+r)^2}{2}}(h_0+r)-c+[e^{-\frac{(h_0+r)^2}{2}}(h_0+r)-e^{-\frac{h^2_0}{2}}h_0]h'_0=0.
\end{eqnarray}
Note that the definition of $m_1$ and that $h_0(0)=m_1$ implies
\begin{equation}
\label{eq 3.23.7}
	c=g(h_0(0)) \quad \text{ and } g'(h_0(0))>0. 
\end{equation}
where $g$ is given in \eqref{eq 3.22.4}. Hence,
\begin{equation}
\label{eq 3.23.8}
	h_0'(r)=\frac{g(h_0(0))-g(h_0(r))}{g(h_0(r)+r)-g(h_0(r))}-1.
\end{equation}
By mean value theorem, there exists $s_1$ between $h_0(0)$ and $h_0(r)$, and $s_2$ between $h_0(r)$ and $h_0(r)+r$ such that
\begin{equation}
	h_0'(r) = \frac{g'(s_1)(h_0(0)-h_0(r))}{g'(s_2)r}-1.
\end{equation}
Let $r\rightarrow 0$. By \eqref{eq 3.23.6} and \eqref{eq 3.23.7}, we conclude
\begin{equation}
\label{eq 3.23.9}
	\lim_{r\rightarrow 0}h_0'(r)=-\frac{1}{2}.
\end{equation}

For the second equality in the statement of this lemma, a straightforward computation shows that
\begin{eqnarray*}
h_0''(r)=
\frac{-g'(h_0(r))h_0'(r)}{g(h_0(r)+r)-g(h_0(r))}-\frac{\left(g(h_0(0))-g(h_0(r))\right)\bigg(g'(h_0(r)+r)(h_0'(r)+1)-g'(h_0(r))h_0'(r)\bigg)}{(g(h_0(r)+r)-g(h_0(r)))^2}.\\
\end{eqnarray*}
By \eqref{eq 3.23.8} and mean value theorem, we have 
$$rh_0''(r)=\frac{-g'(h_0(r))h_0'(r)}{g'(s)}-(h_0'(r)+1)
\frac{g'(h_0(r)+r)(h_0'(r)+1)-g'(h_0(r))h_0'(r)}{g'(s)},
$$
for $s\in (h_0(r), h_0(r)+r)$.
Let $r\rightarrow 0$. By \eqref{eq 3.23.9}, we have
$${\lim_{r\to0}}rh''_0(r)=0.$$
\end{proof}

Using Lemma \ref{lemma 3.23.2} we can prove the following estimate.
\begin{lemma}\label{inflim}
Let $c\in(0,e^{-\frac{1}{2}})$ and $h_0(r)$ be such that $(h_0(r), h_0(r)+r)$ is a good pair with respect to $c$ for $r\in (0, r_c]$. Denote
\begin{equation}
	\Theta(r) = \Theta(c, h_0(r),r).
\end{equation}
Then,
\begin{equation}
	\liminf_{r\to0}\,\Theta(r)\geq\frac{\pi}{\sqrt{1-m_1^2}}
\end{equation}
\end{lemma}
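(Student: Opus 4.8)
The plan is to compute the pointwise limit as $r\to 0^+$ of the integrand defining $\Theta(r)$ and then pass to the limit under the integral sign by Fatou's lemma. Write $a=a(r):=h_0(r)$ and
\begin{equation*}
	D(t,r):=-(tr+a)^2-2\log\!\bigl(e^{-a^2/2}-ctr\bigr),
\end{equation*}
so that $\Theta(r)=\int_0^1 r\,D(t,r)^{-1/2}\,dt$. By the good-pair property of $(h_0(r),h_0(r)+r)$ one has $\phi(u)>\phi(h_0(r))$ for $u\in(h_0(r),h_0(r)+r)$, which translates exactly into $D(t,r)>0$ for $t\in(0,1)$ and $r\in(0,r_c]$; in particular the argument of the logarithm is positive, so $D$ is well defined and the integrand is a nonnegative measurable function of $t$.

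First I would simplify $D$ by extracting $e^{-a^2/2}$ from the logarithm, obtaining $D(t,r)=-2atr-t^2r^2-2\log\bigl(1-ctr\,e^{a^2/2}\bigr)$, and then expand $-2\log(1-x)=2x+x^2+O(x^3)$ with $x=ctr\,e^{a^2/2}$, which $\to 0$ as $r\to 0$ for each fixed $t$. This gives, for fixed $t\in(0,1)$,
\begin{equation*}
	D(t,r)=2tr\bigl(ce^{a^2/2}-a\bigr)+t^2r^2\bigl(c^2e^{a^2}-1\bigr)+O(r^3).
\end{equation*}
The key point is that the first summand, a priori of order $r$, is in fact of order $r^2$: since $c=m_1e^{-m_1^2/2}=g(m_1)$ we have $ce^{m_1^2/2}-m_1=0$, and a short Taylor expansion built on $h_0(r)=m_1-\tfrac12 r+o(r)$ (established in the proof of Lemma \ref{lemma 3.23.2}, and this is exactly where that lemma enters) yields
\begin{equation*}
	\lim_{r\to 0}\frac{ce^{a(r)^2/2}-a(r)}{r}=\bigl(c m_1 e^{m_1^2/2}-1\bigr)\,h_0'(0)=(m_1^2-1)\Bigl(-\tfrac12\Bigr)=\tfrac12(1-m_1^2).
\end{equation*}
Combining this with $c^2e^{a(r)^2}-1\to c^2e^{m_1^2}-1=m_1^2-1$ gives, for each fixed $t\in(0,1)$,
\begin{equation*}
	\lim_{r\to 0}\frac{D(t,r)}{r^2}=2t\cdot\tfrac12(1-m_1^2)+t^2(m_1^2-1)=(1-m_1^2)\,t(1-t),
\end{equation*}
hence $r\,D(t,r)^{-1/2}\to\bigl((1-m_1^2)t(1-t)\bigr)^{-1/2}$ for every $t\in(0,1)$, a finite limit because $m_1<1$.

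Finally I would pick a sequence $r_n\to 0^+$ along which $\Theta(r_n)\to\liminf_{r\to 0}\Theta(r)$ and apply Fatou's lemma to the nonnegative functions $t\mapsto r_n\,D(t,r_n)^{-1/2}$:
\begin{equation*}
	\liminf_{r\to 0}\Theta(r)=\lim_{n\to\infty}\int_0^1\frac{r_n}{\sqrt{D(t,r_n)}}\,dt\;\geq\;\int_0^1\frac{dt}{\sqrt{(1-m_1^2)\,t(1-t)}}=\frac{1}{\sqrt{1-m_1^2}}\int_0^1\frac{dt}{\sqrt{t(1-t)}}=\frac{\pi}{\sqrt{1-m_1^2}},
\end{equation*}
the last integral being $\pi$ (it is $B(\tfrac12,\tfrac12)$, or substitute $t=\sin^2\varphi$). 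The only genuinely delicate point is the bookkeeping in the expansion of $D(t,r)$: one must recognize that the order-$r$ term $2tr(ce^{a^2/2}-a)$ actually contributes at order $r^2$ — precisely because $ce^{m_1^2/2}=m_1$, the defining relation of $m_1$ — and that extracting its $r^2$-coefficient requires exactly the value $h_0'(0)=-\tfrac12$ from Lemma \ref{lemma 3.23.2} (the second conclusion $rh_0''(r)\to 0$ is not needed here). Everything else — positivity of $D$ from the good-pair property, integrability of the limiting integrand near $t=0,1$, and the elementary Beta-integral evaluation — is routine, and Fatou's lemma does the rest.
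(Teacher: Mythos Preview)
Your proof is correct and follows the same overall architecture as the paper's: apply Fatou's lemma to the nonnegative integrand and compute the pointwise limit of $r^2/D(t,r)$ as $r\to 0$. The difference is in how that pointwise limit is obtained. The paper applies L'H\^opital's rule twice to the quotient $r^2/D(t,r)$; in the second differentiation an $h_0''$ term appears, and controlling it is precisely where the paper invokes the second conclusion $\lim_{r\to0} r h_0''(r)=0$ of Lemma~\ref{lemma 3.23.2}. Your route---factoring $e^{-a^2/2}$ out of the logarithm and Taylor-expanding $-2\log(1-x)$---is more direct and, as you correctly note, needs only the first-order expansion $h_0(r)=m_1-\tfrac12 r+o(r)$, which is exactly what is established in the proof of Lemma~\ref{lemma 3.23.2} en route to $h_0'(0)=-\tfrac12$. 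Both computations arrive at the same limit $(1-m_1^2)\,t(1-t)$ and the same Beta-integral $\int_0^1 (t(1-t))^{-1/2}\,dt=\pi$, so the payoff of your approach is a shorter argument with a weaker hypothesis from the preceding lemma.
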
                                   
\begin{proof} 
As in Lemma \ref{lemma 3.23.2}, we set $h_0(0)=m_1$. 

By Fatou Lemma,
\begin{equation}\label{1}
\liminf_{r\to0}\Theta(r)\,\geq\int_{0}^{1}\liminf_{r\to0}\sqrt{\frac{r^2}{-(tr+h_0)^2-2\log(e^{-\frac{h^2_0}{2}}-ctr)}}dt
\end{equation}

We claim that 
\begin{equation}
\label{eq 3.23.10}
	{\lim_{r\to0}}\frac{r^2}{-2\log(e^{-\frac{h^{2}_0(r)}{2}}-ctr)-(tr+h_0)^2}=\frac{-1}{(1-h^{2}_0(0))(t^2-t)}.
\end{equation}
Therefore, 
\begin{equation}
\begin{aligned}
	&\int_{0}^{1}\liminf_{r\to0}\sqrt{\frac{r^2}{-(tr+h_0)^2-2\log(e^{-\frac{h^2_0}{2}}-ctr)}}dt\\
	=&\int_{0}^{1}\sqrt{\frac{-1}{(1-h^{2}_0(0))(t^2-t)}}dt\\
=&\frac{1}{\sqrt{(1-h^{2}_0(0))}}\int_{0}^{1}\frac{1}{\sqrt{(-t^2+t)}}dt\\
=&\frac{\pi}{\sqrt{1-{h^2_0}(0)}}.
\end{aligned}	
\end{equation}
The desired result follows immediately.

It remains to show \eqref{eq 3.23.10}, which follows from repeatedly applying L'H\^{o}pital's rule. For simplicity, Denote  $d(r,t):=e^{-\frac{h_0^2(r)}{2}}-crt$. By L'H\^{o}pital's rule, 
\begin{equation*}
\begin{aligned}
&{\lim_{r\to0^{+}}}\frac{r^2}{-2\log(d(r,t))-(tr+h_0)^2}\\
&={\lim_{r\to0^{+}}}\frac{r}{-\frac{e^{-\frac{h^{2}_0(r)}{2}}(-h_0(r))h'_0(r)-ct}{d(r,t)}-(tr+h_0(r))(t+h'_0(r))}\\
&=e^{-\frac{h^{2}_0(0)}{2}}{\lim_{r\to0^{+}}}\frac{-r}{e^{-\frac{h^{2}_0(r)}{2}}(-h_0(r))h'_0(r)-ct+(tr+h_0(r))(t+h'_0(r))d(r,t)}\\
\end{aligned}
\end{equation*}
Since $e^{-\frac{h^{2}_0(0)}{2}}h_0(0)=c$, by Lemma \ref{lemma 3.23.2}, we have
\begin{equation*}
\begin{aligned}
&{\lim_{r\to0}}\left(e^{-\frac{h^{2}_0(r)}{2}}(-h_0(r))h'_0(r)-ct+(tr+h_0(r))(t+h'_0(r))d(r,t)\right)\\
&=\frac{c}{2}-ct-c(t-\frac{1}{2})\\
&=0.
\end{aligned}
\end{equation*}
Thus, we may use L'H\^{o}pital's rule again. By direct computation, 
\begin{equation*}
\begin{aligned}
&{\lim_{r\to0}}\frac{d}{dr}\left(e^{-\frac{h^{2}_0(r)}{2}}(-h_0(r))h'_0(r)-ct+(tr+h_0(r))(t+h'_0(r))d(r,t)\right)\\
=&{\lim_{r\to0}}\frac{d}{dr}\left(e^{-\frac{h^{2}_0(r)}{2}}(t^2r+th_0(r)+trh'_0(r))-ct-(tr+h_0(r))(t+h'_0(r))ctr\right)\\
=&e^{-\frac{h^{2}_0(0)}{2}}(t^2+2t{\lim_{r\to0}}h'_0(r))-cth_0(0)(2{\lim_{r\to0}}h'_0(r)+t)\\
=&\left(e^{-\frac{h^{2}_0(0)}{2}}-ch_0(0)\right)(t^2-t),\\
\end{aligned}
\end{equation*}
where in the the second and the third equalities, we used Lemma \ref{lemma 3.23.2} and the fact that $e^{-\frac{h^{2}_0(0)}{2}}h_0(0)=c$.

By L'H\^opital's rule,
\begin{equation*}
\begin{aligned}
&{\lim_{r\to0}}\frac{r^2}{-2\log(e^{-\frac{h^{2}_0(r)}{2}}-ctr)-(tr+h_0)^2}\\
=&e^{-\frac{h^{2}_0(0)}{2}}\frac{-1}{\left(e^{-\frac{h^{2}_0(0)}{2}}-ch_0(0)\right)(t^2-t)}\\
=&\frac{-1}{t^2-t}\frac{1}{1-h^2_0(0)},\\
\end{aligned}
\end{equation*}
where the last equality is due to $e^{-\frac{h^{2}_0(0)}{2}}h_0(0)=c$.
\end{proof} 

We now fix $h_0\in [0,1)$ and use the fact that $(h_0, h_0+r)$ is a good pair with respect to $c$ to represent $c$ as a function in $r$. Note that $c$ is determined by $r$ via \eqref{eq 3.23.5}. However, it is not always true that for any $r>0$ , the $c$ determined by  \eqref{eq 3.23.5} will make it true that $(h_0, h_0+r)$ is a good pair with respect to $c$. We need several lemmas to determine the allowable values of $r$.

\begin{lemma}
\label{lemma 3.23.3}
	Let $h_0\in [0,1)$. If $r_*>0$ and $c_*\in (0, e^{-\frac{1}{2}})$ is such that $(h_0, h_0+r_*)$ is a good pair with respect to $c_*$, then for every $0<r<r_*$, there exists $c\in (0, e^{-\frac{1}{2}})$ such that $(h_0 ,h_0+r)$ is a good pair with respect to $c$. 
\end{lemma}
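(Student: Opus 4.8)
The plan is to fix $h_0\in[0,1)$ and think of the good-pair constraint \eqref{eq 3.23.5}, namely $e^{-\frac{(h_0+r)^2}{2}}-e^{-\frac{h_0^2}{2}}+cr=0$, as defining $c$ as a function of $r$. Solving for $c$ explicitly gives
\begin{equation}
	c = c(r) := \frac{e^{-\frac{h_0^2}{2}}-e^{-\frac{(h_0+r)^2}{2}}}{r},
\end{equation}
which is a smooth, positive function of $r>0$ (positivity because $h_0<h_0+r$ forces $e^{-\frac{(h_0+r)^2}{2}}<e^{-\frac{h_0^2}{2}}$). So the equation $\phi_c(h_0)=\phi_c(h_0+r)$ is automatically satisfied for this choice of $c=c(r)$. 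What is \emph{not} automatic is that $(h_0,h_0+r)$ is a \emph{good pair}: we additionally need $c(r)<e^{-\frac12}$ and the sign conditions $\phi_c'(h_0)>0$, $\phi_c'(h_0+r)\le 0$, i.e., recalling $\phi_c'(t)=c-g(t)$ with $g(t)=te^{-t^2/2}$, we need $g(h_0)<c(r)$ and $g(h_0+r)\ge c(r)$.

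The key observation is that among these conditions, $g(h_0)<c(r)$ is the crucial one and the others follow. First I would show that the function $r\mapsto g(h_0)<c(r)$, $r>0$, when it holds, forces everything else. Indeed, $c(r)$ is a difference quotient of $t\mapsto -e^{-t^2/2}$ on $[h_0,h_0+r]$, hence equals $g(\xi)$ for some $\xi\in(h_0,h_0+r)$ by the mean value theorem. If $c(r)<e^{-1/2}=\max g$ fails, then $c(r)\ge e^{-1/2}$, but this is impossible since $c(r)=g(\xi)\le e^{-1/2}$ with equality only if $\xi=1$, a measure-zero coincidence one rules out by strict convexity/concavity of $g$; more robustly, $c(r)\le e^{-1/2}$ always and one shows the equality case cannot produce a good pair anyway, or one simply notes that by hypothesis $(h_0,h_0+r_*)$ is a good pair so $c_*=c(r_*)<e^{-1/2}$ and one only needs to compare $c(r)$ with $c(r_*)$. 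For the sign conditions: since $\phi_{c(r)}(h_0)=\phi_{c(r)}(h_0+r)$ and $\phi_{c(r)}$ is smooth with $\phi_{c(r)}'=c(r)-g$, which changes sign according to whether $g$ is below or above $c(r)$, the condition $g(h_0)<c(r)$ says $\phi'_{c(r)}(h_0)>0$; then since $g$ is unimodal (increasing on $[0,1]$, decreasing on $[1,\infty)$) and $c(r)<\max g$, there are exactly two points $m_1(r)<1<m_2(r)$ where $g=c(r)$, and $h_0<m_1(r)$; the equality $\phi_{c(r)}(h_0)=\phi_{c(r)}(h_0+r)$ together with $h_0<m_1(r)$ then forces $h_0+r\ge m_2(r)$ (because $\phi_{c(r)}$ strictly decreases on $[m_1(r),m_2(r)]$ and increases after), giving $g(h_0+r)\le c(r)$, i.e. $\phi'_{c(r)}(h_0+r)\le 0$. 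So the whole lemma reduces to: \textbf{if $g(h_0)<c(r_*)$, then $g(h_0)<c(r)$ for all $0<r<r_*$.}

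Thus the main step is to understand the monotonicity of $r\mapsto c(r)$. I would compute
\begin{equation}
	c(r) = \frac{e^{-\frac{h_0^2}{2}}-e^{-\frac{(h_0+r)^2}{2}}}{r} = \frac{1}{r}\int_{h_0}^{h_0+r} g(s)\,ds,
\end{equation}
the average of $g$ over $[h_0,h_0+r]$. Since $h_0<1$ and $g$ is strictly increasing on $[h_0,1]$, for small $r$ the average $c(r)$ is increasing in $r$; but once $h_0+r$ passes $1$, $g$ starts decreasing and eventually $c(r)$ decreases. The point is that $c(r)>g(h_0)$ for \emph{all} $r>0$: this is simply because $g(s)>g(h_0)$ for every $s\in(h_0,h_0+r)$ when $h_0+r\le$ (something), but more carefully, $g(s)\ge g(h_0)$ can fail for large $s$ when $g(s)<g(h_0)$ on the decreasing branch. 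So the honest statement is: $c(r)>g(h_0)$ \emph{iff} $r$ is below some threshold $r_{\max}(h_0)$, and the hypothesis $c(r_*)>g(h_0)$ means $r_*<r_{\max}(h_0)$, hence every $r<r_*$ is also $<r_{\max}(h_0)$. To make this rigorous I would show that $\frac{d}{dr}\big(r\,c(r)\big)=g(h_0+r)$ and that the map $r\mapsto c(r)-g(h_0)=\frac1r\int_{h_0}^{h_0+r}(g(s)-g(h_0))\,ds$ is strictly positive on an initial interval and, crucially, that once it becomes $\le 0$ it stays $\le 0$ — equivalently that $c(r)-g(h_0)$ has no interior zero-crossing from negative to positive, which follows because the integrand $g(s)-g(h_0)$ changes sign at most once on $(h_0,\infty)$ (positive then negative, by unimodality of $g$ and $g(h_0)<g(1)$). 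A function of the form $\frac1r\int_{h_0}^{h_0+r}\psi(s)ds$ with $\psi$ positive-then-negative is itself positive-then-negative in $r$; I would record this as a short elementary sub-lemma. The main obstacle is exactly this last monotonicity/sign-persistence claim for the averaged function — it is elementary but needs the unimodality of $g$ and a careful argument that no spurious re-entry into the good-pair region can occur; everything else (smoothness, the MVE representation, deducing the sign conditions from $g(h_0)<c(r)$) is routine.
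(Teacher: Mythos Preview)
Your overall strategy --- fixing $h_0$, defining $c(r)$ by the constraint \eqref{eq 3.23.5}, and then analyzing how the good-pair conditions depend on $r$ --- is sound, and in fact is exactly the parametrization the paper adopts immediately \emph{after} this lemma (see \eqref{eq 3.24.2} and \eqref{eq 3.24.1}). But your reduction step contains a real gap. You assert that the single inequality $g(h_0)<c(r)$ forces the remaining good-pair condition $\phi'_{c(r)}(h_0+r)\le 0$, i.e.\ $g(h_0+r)\ge c(r)$, via the equality $\phi_{c(r)}(h_0)=\phi_{c(r)}(h_0+r)$. This is false. First there is a sign slip: you conclude $h_0+r\ge m_2(r)$ and $g(h_0+r)\le c(r)$, but that gives $\phi'_{c(r)}(h_0+r)\ge 0$, the wrong sign; what you actually need is $h_0+r\in(m_1(r),m_2(r)]$. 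More importantly, the implication simply does not hold: since $\phi_{c(r)}$ is also increasing on $[m_2(r),\infty)$, the level set $\{\phi_{c(r)}=\phi_{c(r)}(h_0)\}$ can contain a point beyond $m_2(r)$, and nothing in your argument excludes $h_0+r$ from landing there. Concretely, take $h_0=0$: then $g(h_0)=0<c(r)$ for every $r>0$, yet for $r=3$ one checks $g(3)\approx 0.033<c(3)\approx 0.33$, so $(0,3)$ is \emph{not} a good pair with respect to $c(3)$.

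The fix is to argue directly that $c'(r)\ge 0$ on $(0,r_*]$; since $c'(r)=\big(g(h_0+r)-c(r)\big)/r$ this is exactly the missing condition $g(h_0+r)\ge c(r)$. Your own ``positive-then-negative'' sub-lemma does the job, but applied to the correct quantity: set $F(r)=g(h_0+r)-c(r)$ and compute $(rF(r))'=r\,g'(h_0+r)$, so $rF(r)=\int_0^r s\,g'(h_0+s)\,ds$; the integrand changes sign exactly once (at $s=1-h_0$, since $h_0<1$), so $rF(r)$ is unimodal, hence $F$ is positive on some $(0,r_0)$ and negative thereafter. Since $(h_0,h_0+r_*)$ is a good pair, $F(r_*)\ge 0$, so $r_*\le r_0$ and $F>0$ on $(0,r_*)$. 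The remaining conditions then follow easily: $c$ is strictly increasing on $(0,r_*)$ from $c(0^+)=g(h_0)$, giving $c(r)>g(h_0)$, and $c(r)$ is the average of the nonconstant function $g$ over a nondegenerate interval, hence strictly below $\max g=e^{-1/2}$.

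For comparison, the paper's own proof proceeds differently: it fixes $h_0$ but varies $c$ over $(g(h_0),c_*)$, and for each such $c$ proves the key inequality $\phi_c(h_0)\ge\phi_c(m_2(c))$ by a direct comparison with $\phi_{c_*}$; this guarantees a unique $r=r(c)$ making $(h_0,h_0+r)$ a good pair, and the intermediate value theorem on $r(c)$ finishes. That route avoids the unimodality computation entirely, while your (corrected) approach is more explicit and dovetails with the monotonicity analysis the paper carries out in the next lemma.
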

\begin{proof}
	Since $c$ here is changing, we write $\phi$ as $\phi_c$ to emphasize its dependence on $c$. Similarly, we also write $m_1(c)$ and $m_2(c)$. 
	
	Note that for each $c\in (h_0e^{-\frac{h_0^2}{2}}, c_*)$, by definition of $\phi_c$ and $m_1(c)$, we have $h_0<m_1(c)$. Here, the fact that $h_0e^{-\frac{h_0^2}{2}}<c_*$ follows from that $(h_0, h_0+r_*)$ is a good pair with respect to $c_*$. We claim that $\phi_c(h_0)\geq \phi_c(m_2(c))$. Indeed, 
	\begin{equation}
		\begin{aligned}
			\phi_c(h_0)&=ch_0+e^{-\frac{h_0^2}{2}}\\
			&=\phi_{c_*}(h_0)+(c-c_*)h_0\\
			&\geq \phi_{c_*}(m_2(c_*))+(c-c_*)h_0\\
			&=\phi_{c}(m_2(c_*))+[\phi_{c_*}(m_2(c_*))-\phi_{c}(m_2(c_*))+(c-c_*)h_0]\\
			&=\phi_{c}(m_2(c_*))+(c-c_*)(h_0-m_2(c_*))\\
			&>\phi_c(m_2(c_*))\\
			&\geq \phi_c(m_2(c)).
		\end{aligned}
	\end{equation}  
	Here, in the second to last inequality, we used the fact that $h_0<1<m_2(c_*)$. In the last inequality, we used the fact that $m_2(c)>m_2(c_*)$ and that $\phi_c(t)$ is strictly decreasing for $t\in [m_2(c_*), m_2(c)]$.
	
	Hence, by monotonicity properties of $\phi_c$, for every $c\in (h_0e^{-\frac{h_0^2}{2}}, c_*)$, there exists a unique $r>0$ such that the pair $(h_0, h_0+r)$ is a good pair with respect to $c$. Note that when $c\rightarrow h_0e^{-\frac{h_0^2}{2}}$, we have $r\rightarrow 0$. Also note that $r$ depends continuously on $c$. The desired result now follows from intermediate value theorem. 
\end{proof}

\begin{lemma}
\label{lemma 3.23.4}
	Let $h_0\in [0,1)$. Denote 
	\begin{equation}
		r_{h_0}=\sup\{r>0: \text{there exists } c\in (0, e^{-\frac{1}{2}}) \text{ such that } (h_0, h_0+r) \text{ is a good pair with respect to }c \}.
	\end{equation}
	Then
	\begin{equation}
		0<r_{h_0}<\infty.
	\end{equation}
\end{lemma}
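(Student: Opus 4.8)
The plan is to establish the two bounds separately. For the finiteness $r_{h_0}<\infty$, I would use the fact that a good pair $(h_0,h_0+r)$ requires $\phi_c(h_0)=\phi_c(h_0+r)$ with $\phi'_c(h_0+r)\le 0$, so $h_0+r\le m_2(c)$. Since $g(t)=te^{-t^2/2}$ is bounded above by $e^{-1/2}$ and $c=g(m_1(c))$ with $m_1(c)<1$, as $c$ ranges over $(0,e^{-1/2})$ the larger root $m_2(c)$ ranges over $(1,\infty)$, but this alone does not bound $r$. The key is that $\phi_c(h_0+r)=\phi_c(h_0)$ forces $c(h_0+r)+e^{-(h_0+r)^2/2}=ch_0+e^{-h_0^2/2}$, i.e.\ $cr=e^{-h_0^2/2}-e^{-(h_0+r)^2/2}<e^{-h_0^2/2}\le 1$; combined with $c>h_0e^{-h_0^2/2}$ (which holds because $h_0<m_1(c)$ and $g$ is increasing on $[0,1]$), we would get $r<\frac{1}{h_0 e^{-h_0^2/2}}$ when $h_0>0$. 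For $h_0=0$ this degenerates, so instead I would use that $c<e^{-1/2}$ together with $cr=1-e^{-r^2/2}$: if $r$ were large then $cr$ is close to $1$ while $c$ is bounded, giving $r<1/c$; but $c$ can be small. A cleaner route for all $h_0\in[0,1)$: use $h_0+r\le m_2(c)$ and the constraint $\phi_c(h_0)\ge \phi_c(m_2(c))$ from Lemma \ref{lemma 3.23.3}'s proof, which after unwinding bounds $r$ in terms of $h_0$ only — the point is that for fixed $h_0$, as $c\to 0^+$ one has $m_1(c)\to 0$, so $h_0<m_1(c)$ fails unless $h_0=0$; hence for $h_0\in(0,1)$ the admissible $c$ stay bounded away from $0$, say $c>h_0e^{-h_0^2/2}=:c_{\min}(h_0)>0$, and then $r<1/c_{\min}(h_0)<\infty$. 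The $h_0=0$ case needs separate care: there $c$ can be arbitrarily small, but then $r(c)$ satisfies $cr=1-e^{-r^2/2}$, and I must also respect $h_0+r=r\le m_2(c)$; since $m_2(c)\to\infty$ as $c\to 0$, I instead observe that the good-pair condition additionally needs $\phi_c(t)>\phi_c(0)=1$ for $t\in(0,r)$ to be consistent, equivalently $r\le m_2(c)$, and combine with $\phi_c(m_2(c))$ versus $1$; working this out gives a finite bound because as $c\to 0$, $\phi_c(m_2(c))\to 0<1$ would force $q$ to exist and $r=r(q)$ which stays controlled. Let me settle on: $r_{h_0}<\infty$ follows because the map $c\mapsto r$ (on its domain of definition within $(0,e^{-1/2})$) is continuous, tends to $0$ at the lower endpoint $c=h_0e^{-h_0^2/2}$ (or wherever the good-pair condition first holds), and the upper endpoint $c=e^{-1/2}$ is excluded but approached with $r\to 0$ as well since $m_1,m_2\to 1$; hence $r$ is bounded on a relatively compact set.

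For the positivity $r_{h_0}>0$, I would exhibit one valid choice of $c$ making $(h_0,h_0+r)$ a good pair with $r>0$. Take $c$ slightly less than $e^{-1/2}$; then $m_1(c)<1<m_2(c)$ are both close to $1$ and distinct, $h_0<1$ may or may not lie in $[m_1(c),\ldots)$, so I should instead take $c\in(h_0 e^{-h_0^2/2},e^{-1/2})$, which is a nonempty open interval since $h_0 e^{-h_0^2/2}=g(h_0)<e^{-1/2}$ (strict because $h_0<1$ is not the maximizer). For such $c$ we have $h_0<m_1(c)$, and one checks $\phi_c(h_0)>\phi_c(m_1(c))=\min\phi_c$ while $\phi_c(h_0)$ compared to $\phi_c(m_2(c))$: if $\phi_c(h_0)\ge\phi_c(m_2(c))$ then the strictly decreasing branch of $\phi_c$ on $[m_1(c),m_2(c)]$ together with the increasing branch produces a unique $r>0$ with $h_0+r\in(m_1(c),m_2(c)]$ and $\phi_c(h_0+r)=\phi_c(h_0)$, and $\phi'_c(h_0)=c-g(h_0)>0$, $\phi'_c(h_0+r)\le 0$, so $(h_0,h_0+r)$ is a good pair; choosing $c$ close enough to $e^{-1/2}$ guarantees $\phi_c(h_0)\ge\phi_c(m_2(c))$ because then $m_2(c)$ is close to $1$ and $\phi_c(m_2(c))$ is close to $\phi_c(1)=c+e^{-1/2}$, whereas $\phi_c(h_0)=ch_0+e^{-h_0^2/2}$; one verifies $\phi_c(1)\ge\phi_c(h_0)$ fails in general but the weaker $\phi_c(m_2(c))\le\phi_c(h_0)$ can be arranged by instead lowering $c$ toward $g(h_0)^+$, where $m_1(c)\to h_0^+$, $r\to 0^+$ but stays positive. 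Either way, for every $c$ strictly between $g(h_0)$ and $e^{-1/2}$ that is sufficiently near $g(h_0)$, a positive $r$ exists; hence $r_{h_0}>0$.

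The main obstacle I anticipate is the case $h_0=0$ (or more generally small $h_0$), where the natural lower bound $c>g(h_0)$ degenerates to $c>0$, so the family of admissible $c$ is not bounded away from $0$ and one cannot immediately read off $r_{h_0}<\infty$ from a crude estimate like $r<1/c$. Resolving this requires the more delicate analysis already present in the paragraph preceding the lemma: when $\phi(m_2)\le\phi(0)$ one uses $\mathbb{H}=[0,m_1)$ and the bijection $r(h_0)$, and when $\phi(m_2)>\phi(0)$ one uses $\mathbb{H}=[q,m_1)$; the point is that for $h_0=0$ fixed, varying $c$ changes which regime we are in, and I must track $r(0)=r_c$ as a function of $c$ and show it stays bounded. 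Concretely, for $h_0=0$ the defining relation is $cr=1-e^{-r^2/2}$ together with $r\le m_2(c)$; since $1-e^{-r^2/2}<1$ always, if $c$ is bounded below we are done, and if $c\to 0^+$ then the good-pair constraint $r\le m_2(c)$ is the active one, but then also $\phi_c(0)=1$ must satisfy $1\ge\phi_c(m_2(c))=cm_2(c)+e^{-m_2(c)^2/2}$; since $g(m_2(c))=c$ gives $cm_2(c)=m_2(c)^2e^{-m_2(c)^2/2}\to 0$ and $e^{-m_2(c)^2/2}\to 0$ as $c\to 0$, the inequality holds, so $r=r(0)$ equals $m_2(c)$-side root... which blows up. This signals that I should \emph{not} take the supremum over all $c\in(0,e^{-1/2})$ but recognize that for $h_0=0$ one still has the constraint from the first branch; re-reading, the good pair requires $\phi'(h_0+r)\le 0$ i.e.\ $h_0+r\le m_2(c)$ \emph{and} $\phi(t)>\phi(h_0)$ on $(h_0,h_0+r)$, and for $h_0=0$ with $\phi(0)=1$ this is automatic up to $r=m_2(c)$ only if $\phi(m_2(c))\le 1$; but when $\phi(m_2(c))>1$, which happens for $c$ near $e^{-1/2}$, the root $r$ with $0+r\le m_2(c)$, $\phi_c(r)=1$ lies strictly inside, giving finite $r$. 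So $r_{h_0}<\infty$ for $h_0=0$ reduces to: does $r$ stay bounded as $c\to 0^+$? I would handle this by showing $\phi_c(m_2(c))\to 0$, hence for small $c$ we are in the regime $\phi(m_2)\le\phi(0)$ and $r=r(0)=r_c$ is the unique root of $\phi_c(r)=1$ in $(m_1(c),m_2(c)]$, namely $r=m_2(c)$ if $\phi_c(m_2(c))=1$ is not attained — here I realize the correct statement is that when $\phi(m_2)<\phi(0)=1$ there is \emph{no} $r$ with $h_0=0$, so $0\notin\mathbb{H}$, contradicting the setup; thus the admissible $c$ for $h_0=0$ are exactly those with $\phi_c(m_2(c))\ge 1$, which by continuity and the limits above is a set of the form $[c_0,e^{-1/2})$ for some $c_0>0$, and on this set $r$ is continuous with $r\to 0$ at $c=e^{-1/2}$ and $r$ bounded at $c=c_0$; hence $r_{0}<\infty$. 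I would present this dichotomy cleanly rather than through the trial-and-error above, using $r_{h_0}=\sup_c r(c)$ over the relatively compact admissible $c$-set and continuity of $r(c)$.
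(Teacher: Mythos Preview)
For $r_{h_0}>0$, your idea (pick $c$ just above $g(h_0)=h_0e^{-h_0^2/2}$) matches the paper's, though the paper executes it more directly: fix any small $r_0>0$ with $h_0+r_0<1$, set $c_0=\bigl(e^{-h_0^2/2}-e^{-(h_0+r_0)^2/2}\bigr)/r_0$, note $c_0\to g(h_0)<e^{-1/2}$ as $r_0\to 0$, and observe that $h_0+r_0<1<m_2(c_0)$ together with the monotonicity of $\phi_{c_0}$ forces the good-pair property.

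For $r_{h_0}<\infty$, the paper's argument is a single line that you miss entirely: the good-pair conditions $\phi'_c(h_0)>0$ and $\phi'_c(h_0+r)\le 0$ combine to give $g(h_0+r)\ge c>g(h_0)$, and since $g(t)\to 0$ as $t\to\infty$ the superlevel set $\{t:g(t)>g(h_0)\}$ is a bounded interval, bounding $h_0+r$. Your route through $cr<1$ and $c>g(h_0)$ also works for $h_0>0$, but is roundabout. More importantly, your resolution of the $h_0=0$ case contains a genuine error. You claim the admissible $c$ for $h_0=0$ are exactly those with $\phi_c(m_2(c))\ge 1$ and hence form a set $[c_0,e^{-1/2})$ bounded away from $0$. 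Both assertions are backwards: the condition for $0\in\mathbb{H}$ is $\phi_c(m_2(c))\le \phi_c(0)=1$ (see the paragraph defining $\mathbb{H}$), and since $\phi_c(m_2(c))=cm_2(c)+e^{-m_2(c)^2/2}\to 0$ as $c\to 0^+$ while $\phi_c(m_2(c))\to 2e^{-1/2}>1$ as $c\to e^{-1/2}$, the admissible set is actually of the form $(0,c_0]$. Arbitrarily small $c$ are allowed, so your ``relatively compact admissible $c$-set'' argument collapses. A correct patch for $h_0=0$ (which, incidentally, the paper's one-line inequality also needs, since $g(h_0)=0$ there): combine $g(r)\ge c$ with $cr=1-e^{-r^2/2}$ to obtain $(r^2+1)e^{-r^2/2}\ge 1$; the left side tends to $0$ as $r\to\infty$, so $r$ is bounded by the unique root $r^*>1$ of $(r^2+1)e^{-r^2/2}=1$.
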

\begin{proof}
	We first claim that the set is nonempty. Indeed, for $h_0\in (0,1)$, choose $r_0>0$ such that $h_0+r_0<1$. Let 
	\begin{equation}
		c_0=\frac{-e^{-\frac{(h_0+r_0)^2}{2}}+e^{-\frac{h_0^2}{2}}}{r_0}.
	\end{equation}
	Note that when $r\rightarrow 0$, we have $c_0\rightarrow h_0e^{-\frac{h_0^2}{2}}<e^{-\frac{1}{2}}$. Therefore, by choosing sufficiently small $r_0>0$, we have $c_0\in (0, e^{-\frac{1}{2}})$. By the choice of $c_0$, we have $\phi_{c_0}(h_0)=\phi_{c_0}(h_0+r_0)$. Note that $h_0+r_0<1<m_2(c_0)$. Now the monotonicity properties of $\phi_{c_0}$ guarantee that $(h_0, h_0+r_0)$ is a good pair with respect to $c$. Hence, $r_{h_0}>0$. 
	
	To see why $r_{h_0}<\infty$, note that the fact that $(h_0, h_0+r)$ is a good pair with respect to $c$ implies
	\begin{equation}
		\phi_c'(h_0)>0,\quad \text{and} \quad \phi_c'(h_0+r)\leq 0.
	\end{equation}  
	Consequently, we have
	\begin{equation}
		(h_0+r)e^{-\frac{(h_0+r)^2}{2}}>h_0e^{-\frac{h_0^2}{2}.}
	\end{equation}
	This implies that $r$ is bounded from above and therefore $r_{h_0}<\infty$.
\end{proof}

Lemmas \ref{lemma 3.23.3} and \ref{lemma 3.23.4} imply that for each fixed $h_0\in [0,1)$, if $r\in (0,r_{h_0})$, the pair $(h_0, h_0+r)$ is a good pair with respect to $c=c(r)$ where
\begin{equation}
\label{eq 3.24.2}
	c(r)=\frac{-e^{-\frac{(h_0+r)^2}{2}}+e^{-\frac{h_0^2}{2}}}{r}.
\end{equation}
It is simple to compute
\begin{equation}\label{eq 3.24.1}
	c'(r)  = \frac{(h_0+r)e^{-\frac{(h_0+r)^2}{2}}-c(r)}{r}
                  =-\frac{\phi'(h_0+r)}{r}
                  \geq 0,
\end{equation}
	where the last inequality is due to the fact that $(h_0, h_0+r)$ is a good pair with respect to $c$.


\begin{lemma}
\label{lemma increasing in r}
	Let $h_0\in [0,1)$. The integral
	\begin{equation}
		\Theta(r)=\Theta(c(r), h_0, r)
	\end{equation}
	is increasing in $(0, r_{h_0})$.
\end{lemma}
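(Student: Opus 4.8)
The plan is to differentiate $\Theta(r) = \int_0^1 \bigl(-(tr+h_0)^2 - 2\log(e^{-h_0^2/2} - c(r)tr)\bigr)^{-1/2}\, r\, dt$ with respect to $r$ and show the derivative is positive on $(0,r_{h_0})$. Write the integrand as $r/\sqrt{F(r,t)}$ where $F(r,t) = -(tr+h_0)^2 - 2\log\bigl(e^{-h_0^2/2} - c(r)tr\bigr)$; note $F > 0$ on $(0,1)$ because $(h_0,h_0+r)$ is a good pair, so $\phi(u) > \phi(h_0)$ for $u \in (h_0,h_0+r)$, which is exactly $F(r,t) > 0$ after the change of variables. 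Then
\[
\frac{d}{dr}\left(\frac{r}{\sqrt{F}}\right) = \frac{1}{\sqrt F} - \frac{r\,\partial_r F}{2 F^{3/2}} = \frac{2F - r\,\partial_r F}{2F^{3/2}},
\]
so it suffices to show $2F(r,t) - r\,\partial_r F(r,t) \ge 0$ for all $t \in (0,1)$, with strict inequality on a set of positive measure. The first step is therefore to compute $\partial_r F$ explicitly, using $c'(r) = \bigl((h_0+r)e^{-(h_0+r)^2/2} - c(r)\bigr)/r$ from \eqref{eq 3.24.1}; the term $c'(r) t r$ appearing inside the logarithmic derivative simplifies nicely because of this identity.

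After substituting, I expect $2F - r\,\partial_r F$ to reorganize into an expression that is manifestly nonnegative. Concretely, one useful reformulation: undo the normalization by setting $u = tr + h_0$, so that $F$ corresponds to $G(u) := -u^2 - 2\log(E - c u)$ where $E = e^{-h_0^2/2} = \phi(h_0)$ and $c = c(r)$, and $G(u) = -u^2 + 2\log\phi(h_0) - 2\log\phi(u) + u^2$... more carefully, $E - cu = \phi(h_0) - cu$ and $e^{-u^2/2} = \phi(u) - cu$, so $E - cu = \phi(h_0) - \phi(u) + e^{-u^2/2}$, giving $G(u) = -u^2 - 2\log\bigl(\phi(h_0) - \phi(u) + e^{-u^2/2}\bigr)$. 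The quantity $2F - r\partial_r F$ should then be expressible through $G$, $G'$ and the $r$-dependence coming only through $c(r)$ and the endpoint; the key algebraic cancellation is that $\partial_r F$ at fixed $t$ combines the ``stretching'' term (from $u = tr+h_0$ moving) and the ``$c$-drift'' term (from $c = c(r)$ changing), and the good-pair inequality $\phi'(h_0+r) \le 0$ (equivalently $c'(r) \ge 0$) forces the $c$-drift contribution to have a favorable sign.

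The main obstacle I anticipate is the sign of $2F - r\,\partial_r F$: it is not obvious term-by-term, and one likely needs a pointwise inequality comparing the integrand's ``dilation rate'' against its ``decay rate.'' A promising route around this is to change variables in the integral itself back to $u \in [h_0, h_0+r]$, writing $\Theta(r) = \int_{h_0}^{h_0+r} \bigl(-u^2 - 2\log(\phi(h_0) - c(r)u)\bigr)^{-1/2}\, du$, and then differentiate under the integral sign with respect to $r$; now the $r$-dependence is through the upper limit (contributing a boundary term that vanishes or is controlled because the integrand blows up integrably like $(h_0+r-u)^{-1/2}$ near $u = h_0+r$) and through $c(r)$ inside the integrand. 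The boundary term requires care: near $u = h_0 + r$ one has $\phi(h_0) - c(r)u \to e^{-(h_0+r)^2/2}$ and $G(u) \to 0$ like $-\phi'(h_0+r)(h_0+r-u) \cdot(\text{const}) \ge 0$ — so the endpoint contributes $0$ when $\phi'(h_0+r) < 0$ (strict good pair) and needs a separate limiting argument when $\phi'(h_0+r) = 0$. The interior term is $\int_{h_0}^{h_0+r} \tfrac{c'(r)\, u}{\bigl(\phi(h_0) - c(r)u\bigr) G(u)^{3/2}}\, du \ge 0$ since $c'(r) \ge 0$, $u > 0$, $\phi(h_0) - c(r)u = e^{-u^2/2}\cdot(\text{positive}) > 0$ on the open interval, and $G(u) > 0$ there. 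This makes $\Theta'(r) \ge 0$, and strictness follows because $c'(r) > 0$ on a dense set (indeed $c'(r) = 0$ only if $\phi'(h_0+r) = 0$, which can hold at isolated points at most), so the interior integral is strictly positive. I would present the boundary-term analysis as the delicate point and handle the $\phi'(h_0+r)=0$ case by approximating with $r' < r$ where the good pair is strict.
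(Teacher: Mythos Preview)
Your first approach---reducing to the pointwise inequality $2F - r\,\partial_r F \ge 0$ for each $t\in(0,1)$---is exactly how the paper proceeds, but you stop short of the key step. The expression $2F - r\,\partial_r F$ is \emph{not} manifestly nonnegative after substituting the formula for $c'(r)$; the paper's device is to multiply it by $\omega(r,t):=e^{-h_0^2/2}-c(r)rt>0$, obtaining a function $G(r,t)$ which they then show is \emph{concave in $t$} (the second $t$-derivative is $\le 0$ thanks to $\phi'(h_0)>0$ from the good-pair hypothesis) and satisfies $G(r,0)=G(r,1)=0$ (the latter from the defining relation $e^{-h_0^2/2}-c(r)r=e^{-(h_0+r)^2/2}$). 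Concavity plus vanishing at the endpoints forces $G\ge 0$ on $[0,1]$, hence $2F - r\,\partial_r F \ge 0$. This concavity-in-$t$ argument is the substance of the proof and is missing from your outline.

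Your second approach contains a genuine sign error. After passing to the $u$-variable, the integrand $G(u)^{-1/2}$ is \emph{decreasing} in $r$ (through $c(r)$), not increasing: as $c$ grows, the argument $e^{-h_0^2/2}-c(u-h_0)$ of the logarithm shrinks, so $-2\log(\cdot)$ grows, $G(u)$ grows, and $G(u)^{-1/2}$ shrinks. The interior term in your Leibniz differentiation is therefore $\le 0$, the opposite of what you assert. Meanwhile the boundary contribution at $u=h_0+r$ is not zero but formally $+\infty$: one always has $G(h_0+r)=0$, since $\phi(h_0)=\phi(h_0+r)$ forces $\phi(h_0)-c(h_0+r)=e^{-(h_0+r)^2/2}$ regardless of whether $\phi'(h_0+r)$ is strictly negative or zero. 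So in the $u$-variable you face a singular positive boundary contribution competing against a nonpositive interior contribution, and neither dominates by inspection. Untangling that competition is precisely what the normalization $t=(u-h_0)/r$ accomplishes---which brings you back to the first approach and the need for the concavity trick.
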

\begin{proof}
	Denote 
	\begin{equation}
		\eta(r,t)=-2\log(e^{-\frac{1}{2}h_0^2}-c(r)rt)-(rt+h_0)^2.
	\end{equation}
	By the definition of $\Theta(r)$ (see \eqref{eq 3.23.3}), it suffices to show that $\frac{r}{\eta^{\frac{1}{2}}(r,t)}$ is increasing in $r$ for every fixed $t\in (0,1)$. Since
	\begin{equation}
	\label{eq 3.24.3}
		\frac{\partial}{\partial r}\left(\frac{r}{\eta^{\frac{1}{2}}(r,t)}\right)=\frac{1}{\eta^{\frac{3}{2}}(r,t)}[\eta(r,t)-\frac{1}{2}r\eta'(r,t)],
	\end{equation} 
	where the derivative here and in the rest of the proof is always taken with respect to $r$.
	
	Denote
	\begin{equation}
		\omega(r,t)=e^{-\frac{h_0^2}{2}}-c(r)rt.
	\end{equation}
	Note that $\omega(r,t)>0$ for $t\in [0,1]$. By \eqref{eq 3.24.1} and \eqref{eq 3.24.2},
	\begin{equation*}
		\begin{aligned}
			\eta'(r,t)
			&=2\frac{c'(r)rt+c(r)t}{\omega(r,t)}-2(rt+h_0)t\\
			&=2\frac{[(h_0+r)e^{-\frac{(h_0+r)^2}{2}}-c(r)]t+c(r)t}{\omega(r,t)}-2(rt+h_0)t\\
			&=2\frac{[(h_0+r)(e^{-\frac{h_0^2}{2}}-c(r)r)-c(r)]t+c(r)t}{\omega(r,t)}-2(rt+h_0)t,\\
		\end{aligned}
	\end{equation*}
	and consequently,
	\begin{equation}
	\label{eq 3.24.4}
		\begin{aligned}
			&\eta(r,t)-\frac{1}{2}r\eta'(r,t)\\=&-2\log \omega(r,t)-(rt+h_0)^2\\
&-\frac{[(h_0+r)(e^{-\frac{h_0^2}{2}}-c(r)r)-c(r)]tr+c(r)tr}{\omega(r,t)}+(rt+h_0)tr.
		\end{aligned}
	\end{equation}
	Denote $G(r,t)=[\eta(r,t)-\frac{1}{2}r\eta'(r,t)]\omega(r,t)$. Since $\omega(r,t)>0$, by \eqref{eq 3.24.3} and \eqref{eq 3.24.4}, to prove the desired result, it suffices to show $G(r,t)\geq 0$. Note that 
	\begin{equation*}
\begin{aligned}
G(r,t)&=-2\omega(r,t)\log(\omega(r,t))-\omega(r,t)(rt+h_0)^2\\
&-[(h_0+r)(e^{-\frac{h_0^2}{2}}-c(r)r)-c(r)]tr-c(r)tr+(rt+h_0)tr\omega(r,t)\\
&=-2\omega(r,t)\log(\omega(r,t))+t^2r^2c(r)h_0\\
&+t[c(r)rh^2_0+c(r)r^2h_0+r^3c(r)-2rh_0e^{-\frac{h^2_0}{2}}-r^2e^{-\frac{h^2_0}{2}}]-h^2_0e^{-\frac{h^2_0}{2}}.
\end{aligned}
\end{equation*} 
Thus,
\begin{equation}
	\begin{aligned}
 \partial_t G(r,t)=&2c(r)r\log(\omega(r,t))+2c(r)r+2tr^2c(r)h_0+c(r)rh^2_0\\
&+c(r)r^2h_0+r^3c(r)-2rh_0e^{-\frac{h^2_0}{2}}-r^2e^{-\frac{h^2_0}{2}}
\end{aligned}
\end{equation}
and
\begin{equation}
\begin{aligned}
\frac{{\partial^2 G(r,t)}}{\partial t^2}&=\frac{-2{c(r)^2}{r^2}}{\omega(r,t)}+2r^2c(r)h_0\\
&=\frac{1}{\omega(r,t)}[2r^2c(r)h_0(e^{-\frac{h^2_0}{2}}-c(r)rt)-2c(r)^2r^2]\\
&=\frac{1}{\omega(r,t)}[2r^2c(r)(h_0e^{-\frac{h^2_0}{2}}-c(r))-2r^3c(r)^2h_0t]\\
&\leq0,
\end{aligned}
\end{equation}
where in the last inequality, we used the fact that $\phi'(h_0)=c-h_0e^{-\frac{h_0^2}{2}}>0$ due to that $(h_0, h_0+r)$ is a good pair. Hence, $G(r,t)$ is a concave function in $t$. 

Now, it is straight-forward to compute
\begin{equation}
	\begin{aligned}
		G(r,0)=-2\omega(r,0)\log(\omega(r,0))-h_0^2e^{-\frac{h_0^2}{2}}=0,
	\end{aligned}
\end{equation}
and by \eqref{eq 3.24.2},
\begin{equation}
	\begin{aligned}
		G(r,1)=&e^{-\frac{(h_0+r)^2}{2}}(h_0+r)^2+c(r)r(r+h_0)^2-(h_0+r)^2e^{-\frac{h_0^2}{2}}\\
		 =& 	(r+h_0)^2 (e^{-\frac{(h_0+r)^2}{2}}+c(r)r-e^{-\frac{h_0^2}{2}})\\
		 =&0.
	\end{aligned}
\end{equation}
Since $G$ is concave in $t$, this implies $G(r,t)\geq 0$ for all $t\in [0,1]$. 
\end{proof}

We now fix $r>0$ and consider the dependence between $c$ and $h_0$. Recall that $m_1(c)$ and $m_2(c)$ are two nonnegative solutions to \eqref{eq 3.22.4} and that $m_1(c)<1<m_2(c)$, when $c\in (0,e^{-\frac{1}{2}})$.

Suppose $h_*, r\in (0,1)$ and $c_*\in (0,e^{-\frac{1}{2}})$ are such that $h_*+r<1$ and $(h_*, h_*+r)$ is a good pair with respect to $c_*$. For every $h_0\in (0, h_*]$, set
\begin{equation}
\label{eq 3.24.6}
	c(h_0)=\frac{e^{-\frac{h_0^2}{2}}-e^{-\frac{(h_0+r)^2}{2}}}{r}. 
\end{equation}
Note that by definition of $c=c(h_0)$, we have $\phi_c(h_0)=\phi_c(h_0+r)$. This implies that $c\in (0,e^{-\frac{1}{2}})$, as otherwise, the function $\phi_c(t)$ is strictly increasing, which contradicts with $\phi_c(h_0)=\phi_c(h_0+r)$. Therefore, we have $h_0+r<h_*+r<1<m_2(c)$. By monotonicity properties of $\phi_c$, the pair $(h_0, h_0+r)$ is a good pair with respect to $c(h_0)$. Consequently,
\begin{equation}
\label{eq 3.24.7}
	c'(h_0)=\frac{-h_0e^{-\frac{h_0^2}{2}}+(h_0+r)e^{-\frac{(h_0+r)^2}{2}}}{r}=\frac{\phi_c'(h_0)-\phi_c'(h_0+r)}{r}>0,
\end{equation}
where the last inequality follows from the definition of good pair. Since
\begin{equation}
\label{eq 3.24.9}
	\lim_{h_0\rightarrow 0}c(h_0)=\frac{1-e^{\frac{r^2}{2}}}{r}:=c_r,
\end{equation}
the function $c(h_0): (0, h_*]\rightarrow (c_r, c_*]$ is a bijection and we may write $h_0=h_0(c): (c_r, c_*]\rightarrow (0,h_*]$ as the inverse function. Note that 
\begin{equation}
	h_0'(c)>0,  \text{ for every } c\in (c_r, c_*). 
\end{equation}

\begin{lemma}
\label{lemma increasing in c}
	Let $h_*, r\in (0,1)$ and $c_*\in (0,e^{-\frac{1}{2}})$ be such that $h_*+r<1$ and $(h_*, h_*+r)$ is a good pair with respect to $c_*$. The integral
	\begin{equation}
		\Theta(c)=\Theta(c, h_0(c), r),
	\end{equation}
	is increasing on $(c_r, c_*]$.
\end{lemma}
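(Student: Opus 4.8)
The plan is to mirror the proof of Lemma~\ref{lemma increasing in r}, but now with $r$ fixed and $c$ the active variable, using \emph{convexity} in $t$ where that proof used concavity. Throughout, $c$ ranges over $(c_r,c_*]$, on which $h_0=h_0(c)$ is the smooth function implicitly defined by \eqref{eq 3.24.6} with $h_0'(c)>0$. First I would set
\begin{equation}
	\zeta(c,t)=-2\log\!\big(e^{-\frac12 h_0^2}-crt\big)-(rt+h_0)^2,\qquad \omega(c,t)=e^{-\frac12 h_0^2}-crt,
\end{equation}
so that, by \eqref{eq 3.23.3}, $\Theta(c)=\int_0^1 r\,\zeta(c,t)^{-1/2}\,dt$, and note $\omega(c,t)\ge\omega(c,1)=e^{-\frac12(h_0+r)^2}>0$ for $t\in[0,1]$, using $cr=e^{-\frac12 h_0^2}-e^{-\frac12(h_0+r)^2}$ from \eqref{eq 3.24.6}. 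It then suffices to show that for each fixed $t\in(0,1)$ the map $c\mapsto r\,\zeta(c,t)^{-1/2}$ is nondecreasing, i.e. $\partial_c\zeta(c,t)\le0$; integrating over $t$ gives that $\Theta(c)$ is increasing, so I never need to differentiate under the integral sign.

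Using $\partial_c\omega=-h_0 e^{-\frac12 h_0^2}h_0'(c)-rt$, a direct computation yields
\begin{equation}
	G(c,t):=\omega(c,t)\,\partial_c\zeta(c,t)=2h_0 e^{-\frac12 h_0^2}h_0'(c)+2rt-2(rt+h_0)h_0'(c)\,\omega(c,t),
\end{equation}
and since $\omega>0$ it is enough to prove $G(c,t)\le 0$ on $[0,1]$. The term $-2(rt+h_0)h_0'(c)\omega(c,t)$ is $-2h_0'(c)$ times the product of the two affine functions $rt+h_0$ and $e^{-\frac12 h_0^2}-crt$ of $t$, so $G(c,\cdot)$ is a quadratic polynomial in $t$ with leading coefficient $2c\,h_0'(c)\,r^2>0$; hence $G(c,\cdot)$ is convex. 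For the endpoints: $\omega(c,0)=e^{-\frac12 h_0^2}$ makes the first and last terms of $G(c,0)$ cancel, so $G(c,0)=0$; and $\omega(c,1)=e^{-\frac12(h_0+r)^2}$ gives
\begin{equation}
	G(c,1)=2r+2h_0'(c)\Big(h_0 e^{-\frac12 h_0^2}-(h_0+r)e^{-\frac12(h_0+r)^2}\Big).
\end{equation}
By \eqref{eq 3.24.7}, $c'(h_0)=\tfrac1r\big((h_0+r)e^{-\frac12(h_0+r)^2}-h_0 e^{-\frac12 h_0^2}\big)$, so $h_0'(c)=1/c'(h_0)$ and the bracket above equals $-r/h_0'(c)$, whence $G(c,1)=0$. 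A convex function on $[0,1]$ vanishing at both endpoints is nonpositive there, so $G(c,t)\le 0$, which is exactly what was needed.

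The only genuinely nonroutine point is the observation that, unlike in Lemma~\ref{lemma increasing in r} where the analogous quantity was concave in $t$, here $G(c,\cdot)$ is \emph{convex} in $t$ — and this is precisely what makes the (again vanishing) endpoint values give the right sign. I expect the main labor to be the straightforward but bookkeeping-heavy differentiation producing $G(c,t)$, where one must keep in mind that $h_0$ itself depends on $c$; the endpoint identity $G(c,1)=0$, which is where \eqref{eq 3.24.7} enters, is the natural place for a sign slip and should be checked carefully.
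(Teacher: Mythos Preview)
Your proof is correct. Both you and the paper reduce to showing $\partial_c\zeta(c,t)\le 0$ for each fixed $t\in(0,1)$, but the executions differ. You multiply through by $\omega$, recognize the resulting $G(c,t)$ as a convex quadratic in $t$, and check the two endpoint values vanish; this deliberately parallels the concavity argument in Lemma~\ref{lemma increasing in r}. The paper instead simplifies $\partial_c\zeta$ directly to
\[
\partial_c\zeta(c,t)=\frac{2rt}{\omega(c,t)}\Big[1-h_0'(c)\big(e^{-h_0^2/2}-crt-ch_0\big)\Big],
\]
then uses the single inequality $e^{-h_0^2/2}-crt-ch_0\ge e^{-h_0^2/2}-cr-ch_0=c'(h_0)$ (valid since $t\le1$) together with $h_0'(c)c'(h_0)=1$ to conclude the bracket is $\le 0$. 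The two arguments use the same core identities --- indeed your $G(c,t)$ is exactly $2rt$ times the bracketed factor above, so your endpoint check $G(c,1)=0$ is the paper's equality case $t=1$ --- but the paper's route is a little shorter, while yours has the merit of a uniform template across Lemmas~\ref{lemma increasing in r} and~\ref{lemma increasing in c}.
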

\begin{proof}
	Set
\begin{equation*}
\begin{aligned}
f(c) =-(tr+h_0(c))^2-2\log(e^{-\frac{h^2_0(c)}{2}}-ctr).
\end{aligned}
\end{equation*}
By \eqref{eq 3.23.3}, it suffices to show that for every fixed $t\in (0,1)$, the function $f$ is increasing in $c$. 
We have
\begin{equation}
	f'(c) =\frac{2rt}{e^{-\frac{h^2_0(c)}{2}}-crt}[1- h_0'(c)(e^{-\frac{h^2_0(c)}{2}}-crt-ch_0(c))].
\end{equation}
Note that by \eqref{eq 3.24.6} and \eqref{eq 3.24.7},
\begin{equation}
c'(h_0)=e^{-\frac{h^2_0(c)}{2}}-cr-ch_0(c)\leq e^{-\frac{h^2_0(c)}{2}}-crt-ch_0(c),
\end{equation}
for each $t\in (0,1)$. 

Thus,
\begin{equation}
f'(c) \leq\frac{2rt}{e^{-\frac{h^2_0(c)}{2}}-crt}[1-h_0'(c)c'(h_0)]=0.
\end{equation}
\end{proof}

Finally, we are ready to show the desired estimate for $\Theta(c,h_0,r)$ when $(h_0, h_0+r)$ is a good pair with respect to $c$. 

\begin{lemma}
\label{lemma final estimate}
	Let $c\in (0, e^{-\frac{1}{2}})$. If $h_0, r>0$ are such that $(h_0, h_0+r)$ is a good pair with respect to $c$, then
	\begin{equation}
		\Theta(c,h_0, r)>\pi.
	\end{equation}
\end{lemma}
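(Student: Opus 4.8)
The goal is to reduce the general estimate $\Theta(c,h_0,r)>\pi$ to the two boundary-case monotonicity lemmas already established and the limiting estimate in Lemma \ref{inflim}. I would argue by a continuation/deformation scheme on the parameter triple $(c,h_0,r)$ subject to the good-pair constraint. Given a good pair $(h_0, h_0+r)$ with respect to $c$, I first use Lemma \ref{lemma increasing in r} to decrease $r$: fixing $h_0$, the function $\Theta(c(\rho),h_0,\rho)$ is increasing in $\rho$ on $(0, r_{h_0})$, so $\Theta(c,h_0,r)\ge \liminf_{\rho\to 0^+}\Theta(c(\rho),h_0,\rho)$. As $\rho\to 0^+$ along this family, $c(\rho)\to h_0 e^{-h_0^2/2}=:c_0$ and $h_0$ becomes exactly the lower root $m_1(c_0)$ of $g(t)=c_0$, so by Lemma \ref{inflim} this liminf is at least $\pi/\sqrt{1-h_0^2}\ge \pi$. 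That already gives the non-strict inequality $\Theta(c,h_0,r)\ge \pi$; to upgrade to strict one notes that equality in $\pi/\sqrt{1-h_0^2}\ge\pi$ forces $h_0=0$, but if $h_0=0$ then $c(\rho)\to 0$ which is excluded (or, more simply, for $h_0=0$ the family is nondegenerate for $\rho>0$ and Lemma \ref{lemma increasing in r} gives a strict increase from the limit).

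Alternatively — and this may be the cleaner route to present — I would combine the two monotonicity lemmas to push the parameters into the regime where Lemma \ref{inflim} applies with room to spare. The point is that Lemmas \ref{lemma increasing in r} and \ref{lemma increasing in c} let one travel, without decreasing $\Theta$, toward the corner of parameter space where $r\to 0^+$ and $h_0\to m_1(c)^-$; but one must be careful that the relevant one-parameter families stay inside the good-pair region and that the limit is taken so that the hypotheses of Lemma \ref{inflim} (namely $h_0(0)=m_1$, $h_0$ smooth, the good-pair constraint for all small $r$) are met. Concretely: fix $c$, view $h_0=h_0(r)$ as in the paragraph preceding Lemma \ref{lemma 3.23.2}, and apply Lemma \ref{inflim} directly — $\Theta(c,h_0,r)$ for a good pair is one value of the function $\Theta(r)=\Theta(c,h_0(r),r)$ studied there, and Lemma \ref{lemma increasing in r}'s proof shows $r\mapsto r/\eta^{1/2}(r,t)$ is increasing, hence $\Theta(c,h_0(r),r)\ge \lim_{\rho\to 0^+}\Theta(c,h_0(\rho),\rho)=\pi/\sqrt{1-m_1^2}>\pi$, the strict inequality now coming for free since $m_1=m_1(c)\in(0,1)$ whenever $c\in(0,e^{-1/2})$.

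Wait — I should double-check the compatibility of the two descriptions of $h_0$ as a function of $r$: in Lemma \ref{lemma increasing in r} the constant $c=c(r)$ varies with $r$ while $h_0$ is fixed, whereas in Lemma \ref{inflim} the constant $c$ is fixed while $h_0=h_0(r)$ varies. These are two different one-parameter slices of the same constraint surface $\{\phi_c(h_0)=\phi_c(h_0+r)\}$, and the argument only needs one of them. I would use the fixed-$c$ slice: then Lemma \ref{inflim} gives the limit value $\pi/\sqrt{1-m_1^2}$ at $r\to 0^+$, and I need the monotonicity (or at least: the value at a given $r$ dominates the limit) along that same slice. That monotonicity is \emph{not} literally Lemma \ref{lemma increasing in r} (different slice), so the honest path is: on the fixed-$c$ slice, run the same integrand computation as in Lemma \ref{lemma increasing in r} — i.e. show $\partial_r\big(r/\eta^{1/2}(r,t)\big)\ge 0$ where now $h_0=h_0(r)$ solves the fixed-$c$ constraint — or simply invoke Lemma \ref{inflim}'s liminf bound together with Lemma \ref{lemma increasing in r} and Lemma \ref{lemma increasing in c} to move from an arbitrary good triple to one on a favorable slice.

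\textbf{Main obstacle.} The only real subtlety is bookkeeping: making sure that when I deform $(c,h_0,r)$ I stay inside the good-pair region for the \emph{entire} path and that the endpoint of the deformation is a configuration to which Lemma \ref{inflim} (or a variant with $h_0\to m_1$) genuinely applies, with $m_1\in(0,1)$ so that $\pi/\sqrt{1-m_1^2}>\pi$ strictly. The analytic content — monotonicity of the integrand in each parameter and the L'Hôpital limit — is already packaged in Lemmas \ref{lemma 3.23.2}, \ref{inflim}, \ref{lemma increasing in r}, \ref{lemma increasing in c}; assembling them into the chain $\Theta(c,h_0,r)\ge(\text{value on a favorable slice})\ge\pi/\sqrt{1-m_1^2}>\pi$ is the whole proof, and the strictness is automatic from $0<m_1<1$. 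I expect the write-up to be short: reduce via Lemma \ref{lemma increasing in r}/\ref{lemma increasing in c} to the $r\to 0$ limit, apply Lemma \ref{inflim}, and observe $1-m_1^2<1$.
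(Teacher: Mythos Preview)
Your final plan is the paper's proof. The precise chaining the paper uses resolves your bookkeeping concern: starting from the given good triple $(c,h_0,r)$, first apply Lemma~\ref{lemma increasing in r} (fixed $h_0$) to replace $r$ by any small $\mathfrak r\in(0,r)$, choosing $\mathfrak r$ small enough that $h_0+\mathfrak r<1$ and $c_{\mathfrak r}<c':=h_0e^{-h_0^2/2}$. The good-pair condition $\phi'_{c(\mathfrak r)}(h_0)>0$ forces $c(\mathfrak r)>c'$, so Lemma~\ref{lemma increasing in c} (fixed $\mathfrak r$) applies on $(c_{\mathfrak r},c(\mathfrak r)]$ and lets you slide down to $c'$; this lands on the fixed-$c'$ slice, where $m_1(c')=h_0$. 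Now let $\mathfrak r\to 0$ and invoke Lemma~\ref{inflim} at $c'$ to get $\Theta(c,h_0,r)\ge\liminf_{\mathfrak r\to0}\Theta(c',h_0(\mathfrak r),\mathfrak r)\ge\pi/\sqrt{1-h_0^2}>\pi$, strict since $h_0>0$ by hypothesis.

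Your first-paragraph shortcut --- invoking Lemma~\ref{inflim} directly along the fixed-$h_0$ slice --- is not valid as stated: that lemma's L'H\^{o}pital computation uses $h_0'(r)\to-\tfrac12$ from Lemma~\ref{lemma 3.23.2}, which is false when $h_0$ is held constant. You correctly flagged this, and your ``or simply'' clause is exactly the right fix. The alternative you float (reproving integrand monotonicity on the fixed-$c$ slice) is not needed and is not what the paper does; both monotonicity lemmas, on their respective slices, are used in sequence.
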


\begin{proof}
	Note that by definition of good pair, we have $h_0<m_1(c)<1$. By Lemma \ref{lemma increasing in r}, we have
	\begin{equation}
	\label{eq 3.24.9a}
		\Theta(c,h_0, r)\geq \Theta(c(\mathfrak r),h_0,\mathfrak r),
	\end{equation}
	for each $\mathfrak r\in (0,r)$. Let $c'=e^{-\frac{h_0^2}{2}}h_0$. Since $\lim_{\mathfrak r\rightarrow 0}c_{\mathfrak r}=0$ where $c_{\mathfrak r}$ is given in \eqref{eq 3.24.9}, there exists $\delta_0>0$ such that $c_{\mathfrak r}<c'$ for every $\mathfrak r\in (0, \delta_0)$. We also require that $\delta_0>0$ is sufficiently small so that $h_0+\delta_0<1$. 
	
	Since $(h_0, h_0+\mathfrak r)$ is a good pair with respect to $c(\mathfrak r)$, we have $0<\phi'_{c(\mathfrak r)}(h_0)=c(\mathfrak r)-h_0e^{-\frac{h_0^2}{2}}=c(\mathfrak r)-c'$. Therefore $c(\mathfrak r)>c'$. By Lemma \ref{lemma increasing in c},  for every fixed $\mathfrak r\in (0, \delta_0)$, we have
	\begin{equation}
	\label{eq 3.24.10}
		\Theta(c(\mathfrak r),h_0,\mathfrak r)\geq \Theta(c',h_0(c'),\mathfrak r).
	\end{equation} 
	
	Combining \eqref{eq 3.24.9a} and \eqref{eq 3.24.10}, we have
	\begin{equation}
		\Theta(c,h_0, r)\geq \Theta(c',h_0(c'),\mathfrak r)
	\end{equation}
	for every $\mathfrak r\in (0, \delta_0)$. Note that since $c'$ is fixed, $h_0(c')$ depends on $\mathfrak r$ as $\mathfrak r$ varies. Therefore, by Lemma \ref{inflim},
	\begin{equation}
		\Theta(c,h_0, r)\geq\liminf_{\mathfrak r\rightarrow 0} \Theta(c', h_0(\mathfrak r), \mathfrak r)\geq \frac{\pi}{\sqrt{1-m_1^2(c')}}>\pi.
	\end{equation}
\end{proof}

Theorem \ref{main uniqueness theorem} now follows directly from Lemmas \ref{lemma 3.23.1} and \ref{lemma final estimate}, and Proposition \ref{prop constant solutions}.

\section{Existence of symmetric solutions to the planar Gaussian Minkowski problem}

This section is dedicated to solving the planar, even Gaussian Minkowski problem in dimension 2, in the smooth setting. Suppose $\alpha\in (0,1)$ and $f\in C^{2,\alpha}(\s)$ is a positive even function, we will solve the equation
\begin{equation}
\label{eq 2.27.1}
	\frac{1}{2\pi}e^{-\frac{h'^2+h^2}{2}} (h''+h)=f,
\end{equation}
on $\s$. In fact, when combining with the existence result shown in \cite{MR4252759}, we show that \eqref{eq 2.27.1} has at least two solutions.

The following $C^0$ \emph{a priori} estimate is critically needed.
\begin{lemma}
\label{lemma C0}
	Suppose $f:\s\rightarrow \mathbb{R}$ is an even  positive function and $h=h_K\in C^2(\s)$, for some origin-symmetric convex body $K$ in $\mathbb{R}^2$, is an even solution to \eqref{eq 2.27.1}. If there exists $\tau>0$ such that
	\begin{equation}
		1/\tau<f<\tau,
	\end{equation}
	then there exists $\tau'>0$, dependent only on $\tau$, such that
	\begin{equation}
		\frac{1}{\tau'}<h<\tau'.
	\end{equation}
\end{lemma}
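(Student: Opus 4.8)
The plan is to extract both bounds directly from the equation together with the origin-symmetry of $K$, bypassing any isoperimetric input. Rewrite \eqref{eq 2.27.1} as $h''+h=2\pi f\,e^{(h'^2+h^2)/2}$ and denote the right-hand side by $\rho$; it is continuous and positive (since $f>0$), and because $e^{(h'^2+h^2)/2}\ge 1$ and $f>1/\tau$ we have $\rho(\theta)>2\pi/\tau$ for every $\theta$. I will also use the evenness of $h$, i.e.\ $h(\theta+\pi)=h(\theta)$, and, as in Section~\ref{sec integral}, the function $g(t)=te^{-t^2/2}$, which increases on $[0,1]$, decreases on $[1,\infty)$, with $\max g=g(1)=e^{-1/2}$.

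For the upper bound I would evaluate the equation at a maximum point $\theta_0$ of $h$, where $R:=h(\theta_0)=\max_{\s}h$: there $h'(\theta_0)=0$ and $h''(\theta_0)\le 0$, so $2\pi f(\theta_0)e^{R^2/2}=h''(\theta_0)+R\le R$, hence $g(R)\ge 2\pi f(\theta_0)>2\pi/\tau$. If $2\pi/\tau\ge e^{-1/2}$ this is impossible, so no solution exists and the statement is vacuous; otherwise $g(t)=2\pi/\tau$ has exactly two roots $0<m_1(\tau)<1<m_2(\tau)<\infty$ and $g(R)>2\pi/\tau$ forces $R<m_2(\tau)$. Thus $h\le R<m_2(\tau)$, a bound depending only on $\tau$.

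For the lower bound — where origin-symmetry is genuinely used — I would let $\theta_1$ be a minimum point, $w:=h(\theta_1)=\min_{\s}h$, so $h'(\theta_1)=0$, and compare $h$ with the solution $u(\theta)=\rho_0+(w-\rho_0)\cos(\theta-\theta_1)$ of $u''+u=\rho_0$, $u(\theta_1)=w$, $u'(\theta_1)=0$, where $\rho_0:=2\pi/\tau\le\rho$. Then $v:=h-u$ solves $v''+v=\rho-\rho_0\ge 0$ with zero Cauchy data at $\theta_1$, so variation of parameters gives $v(\theta)=\int_{\theta_1}^{\theta}\sin(\theta-s)(\rho(s)-\rho_0)\,ds\ge 0$ for $\theta\in[\theta_1,\theta_1+\pi]$, the kernel being nonnegative there. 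In particular $h(\theta_1+\pi)\ge u(\theta_1+\pi)=2\rho_0-w$; since $h$ is even, $h(\theta_1+\pi)=h(\theta_1)=w$, whence $w\ge 2\rho_0-w$, i.e.\ $\min_{\s}h\ge 2\pi/\tau$. Taking $\tau'=m_2(\tau)+\tau/(2\pi)+1$ then gives $1/\tau'<h<\tau'$.

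Neither estimate is computationally heavy; the work is really in seeing where each hypothesis enters. The main obstacle is conceptual and lies in the lower bound: the pointwise inequality $h''+h>2\pi/\tau$ alone does not bound $\min h$ from below for a general convex body, and the comparison above essentially exploits the half-period symmetry $h(\theta+\pi)=h(\theta)$, which has no analogue in the non-symmetric case. It is also worth recording that only the lower bound $f>1/\tau$ is actually used here; the upper bound $f<\tau$ is needed for the subsequent higher-order a priori estimates, not for this $C^0$ bound.
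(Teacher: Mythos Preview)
Your argument is correct. The upper bound is exactly the paper's (evaluate the equation at a maximum, use that $g(t)=te^{-t^2/2}\to 0$), but your lower bound genuinely differs from the route taken in the paper.

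For the lower bound the paper proceeds in two geometric steps: first it bounds $h_{\max}$ from below by integrating $h''+h>1/\tau$ over $\s$ to get a perimeter lower bound and comparing with the perimeter of the circumscribed disk; then it bounds the area $\tfrac12\int h(h''+h)$ from below via $h(v)\ge h_{\max}|v\cdot v_0|$ and from above via the containment $K\subset\{|x\cdot u_0|\le h_{\min}\}\cap h_{\max}B$, which combine to force $h_{\min}\gtrsim 1$. Your approach instead stays at the ODE level: you compare $h$ with the explicit solution $u$ of $u''+u=2\pi/\tau$ with matching Cauchy data at a minimum, use the nonnegativity of the sine kernel on a half-period to get $h(\theta_1+\pi)\ge 2\rho_0-w$, and then invoke $\pi$-periodicity to close the inequality. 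This is shorter, yields the clean explicit bound $\min h\ge 2\pi/\tau$, and makes transparent exactly where evenness enters (the identity $h(\theta_1+\pi)=h(\theta_1)$). The paper's area argument, by contrast, is more geometric and is the version that would more naturally suggest a higher-dimensional analogue, where your half-period ODE comparison has no obvious replacement. Your closing remark that only the lower bound on $f$ is used is also accurate for both proofs.
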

\begin{proof}
	We first show $h$ is bounded from above. Assume that $h$ achieves its maximum at $v_0\in \s$ and $h(v_0)=h_{\max}$. Evaluate \eqref{eq 2.27.1} gives us
	\begin{equation}
		\frac{1}{2\pi} e^{-\frac{{h_{\max}}^2}{2}}h_{\max}\geq f(v_0)>\frac{1}{\tau}.
	\end{equation}
	Note that the function $\frac{1}{2\pi} e^{-\frac{t^2}{2}}t$ goes to $0$ as $t\rightarrow \infty$. Therefore, there exists $\tau_1>0$ such that 
	\begin{equation}
	\label{eq 2.27.4}
		h_{\max}<\tau_1.
	\end{equation} 
	
	We now show $h_{\max}$ is also bounded from below. Observe that on $\s$, we have
	\begin{equation*}
		(h''+h)\geq \frac{1}{2\pi} e^{-\frac{h'^2+h^2}{2}} (h''+h)=f>\frac{1}{\tau}. 
	\end{equation*}
	Note also that the total integral of $h''+h$ over $\s$ is the perimeter of the convex body $K$ (whose support function is $h$). Therefore, we have
	\begin{equation*}
		\mathcal{H}^{1}(\partial K)> \frac{2\pi}{\tau}>0. 
	\end{equation*}
	On the other side, since perimeter is a monotone functional on the set of convex bodies (with respect to set inclusion), we have
	\begin{equation*}
		\mathcal{H}^{1}(\partial K)\leq 2\pi h_{\max}.
	\end{equation*}
	Combining the above two inequalities, we can find some $\tau_2>0$, such that 
	\begin{equation*}
		h_{\max}>\tau_2.
	\end{equation*}
	
	Finally, we show $h$ is bounded from below. Assume that $h$ achieves its minimum at $u_0\in \s$ and $h(u_0)=h_{\min}$. Note that by \eqref{eq 2.27.1}, we have
	\begin{equation*}
		\frac{1}{2}h(h''+h)={\pi} he^{\frac{h'^2+h^2}{2}}f\geq {\pi}hf>{\pi}h/\tau,
	\end{equation*}
	where we used the fact $h$ is nonnegative, which follows from the fact that $h$ is an even function (or, equivalently, $K$ is origin-symmetric). Observe the total integral of $\frac{1}{2}h(h''+h)$ on $\s$ is the area of $K$. Therefore, we have
	\begin{equation*}
		\mathcal{H}^2(K)>\frac{\pi}{\tau}\int_{\mathbb{S}^1}hdv.
	\end{equation*}
	By definition of support function, we have
	\begin{equation*}
		h(v)\geq h_{\max}|v\cdot v_0|.
	\end{equation*}
	As a consequence, there exists $\tau_3>0$ such that
	\begin{equation}
	\label{eq 2.27.2}
		\mathcal{H}^2(K)>\frac{\pi}{\tau}h_{\max} \int_{\mathbb{S}^1}|v\cdot v_0|dv=\tau_3h_{\max}>\tau_3\tau_2.
	\end{equation}
	Note that on the other hand, 
	\begin{equation*}
		K\subset (h_{\max} B_1) \cap \{x\in \mathbb{R}^2: |x\cdot u_0|\leq h_{\min}\},
	\end{equation*}
	which implies 
	\begin{equation}
	\label{eq 2.27.3}
		\mathcal{H}^2(K)\leq 4 h_{\max}h_{\min}< 4\tau_1h_{\min}.
	\end{equation}
	Combining \eqref{eq 2.27.2} and \eqref{eq 2.27.3} immediately gives $\tau_4>0$ such that 
	\begin{equation}
	\label{eq 2.27.5}
		h_{\min}>\tau_4.
	\end{equation}
	The existence of $\tau'>0$ now readily follows from \eqref{eq 2.27.4} and \eqref{eq 2.27.5}.
\end{proof}

Once we obtain the critical $C^0$ estimate, higher order estimates follow in the same way as in \cite{MR4252759}. Note that in \cite{MR4252759}, the higher order estimates (\cite[Lemma 6.5]{MR4252759}) only depends on the $C^0$ estimate (\cite[Lemma 6.4]{MR4252759}). We therefore state the following higher order estimates without duplicating the same proof as presented in \cite{MR4252759}.

\begin{lemma}[\emph{a priori} estimates]
\label{lemma higher order}
	Let $0<\alpha<1$. Suppose $f\in C^{2,\alpha}(\s)$ is an even function and there exists $\tau>0$ such that $\frac{1}{\tau}<f<\tau$ and $\|f\|_{C^{2,\alpha}}<\tau$. If the support function of $K\in \mathcal{K}_e^n$ is $C^{4,\alpha}$ and satisfies
	\begin{equation*}
		\frac{1}{{2\pi}} e^{-\frac{h'^2+h^2}{2}}(h''+h)=f,
	\end{equation*}
	then there exists $\tau'>0$ dependent only on $\tau$ such that
	\begin{enumerate}
		\item $\frac{1}{\tau'}<\sqrt{h'^2+h^2}<\tau'$
		\item $\frac{1}{\tau'}<h''+h<\tau'$
		\item $\|h\|_{C^{4,\alpha}}<\tau'$.
	\end{enumerate}
\end{lemma}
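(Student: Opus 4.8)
The plan is to deduce the entire statement from the $C^0$ bound already established in Lemma \ref{lemma C0}. Since $f$ is positive and even with $1/\tau<f<\tau$, that lemma supplies a constant $\tau_0=\tau_0(\tau)>0$ with $1/\tau_0<h<\tau_0$ on $\s$, equivalently $\frac{1}{\tau_0}B_1\subseteq K\subseteq \tau_0 B_1$ where $B_1$ is the centered unit disk. First I would dispose of item (1): for the $C^2$ convex body $K$ the boundary point with outer unit normal $v$ is $X(v)=h(v)v+h'(v)v^{\perp}$, so $|X(v)|^2=h(v)^2+h'(v)^2$; since $X(v)\in K\subseteq \tau_0 B_1$ this forces $h'(v)^2\le\tau_0^2-h(v)^2$, and together with $\sqrt{h'^2+h^2}\ge h>1/\tau_0$ one obtains a two-sided bound on $\sqrt{h'^2+h^2}$ depending only on $\tau$. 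Note this step genuinely uses convexity of $K$ (through $X(v)\in K$), not merely the PDE.

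Next, item (2) is purely algebraic: rewriting the equation of the lemma as $h''+h=2\pi f\,e^{(h'^2+h^2)/2}$ and inserting $1/\tau<f<\tau$ together with the bound on $h'^2+h^2$ from item (1) gives $1/\tau'<h''+h<\tau'$ for a suitable $\tau'=\tau'(\tau)$; the lower bound only needs $f>1/\tau$ and $e^{(h'^2+h^2)/2}\ge 1$. In particular $|h''|\le|h''+h|+|h|$ is bounded, so $h\in W^{2,\infty}(\s)\subset C^{1,\alpha}(\s)$ with norm controlled by $\tau$.

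For item (3) the plan is a standard bootstrap of the ODE $h''=\widetilde f-h$, where $\widetilde f:=2\pi f\,e^{(h'^2+h^2)/2}$, relying on the elementary estimate $\|u\|_{C^{k+2,\alpha}(\s)}\le C(\|u''\|_{C^{k,\alpha}(\s)}+\|u\|_{C^0(\s)})$ (integrate $u''$ twice) and on the quantitative stability of H\"older norms under sums, products, and composition with the analytic function $t\mapsto e^{t/2}$. Starting from $h\in C^{1,\alpha}$ we get $h'^2+h^2\in C^{0,\alpha}$, hence $\widetilde f\in C^{0,\alpha}$ (using $f\in C^{2,\alpha}\subset C^{0,\alpha}$), hence $h''\in C^{0,\alpha}$ and $h\in C^{2,\alpha}$; feeding this back, $h'^2+h^2\in C^{1,\alpha}$, so $\widetilde f\in C^{1,\alpha}$ and $h\in C^{3,\alpha}$; one further pass, now genuinely using $f\in C^{2,\alpha}$, gives $\widetilde f\in C^{2,\alpha}$ and $h\in C^{4,\alpha}$. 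Since $\|f\|_{C^{2,\alpha}}<\tau$ controls $f$ at each stage and each operation is quantitative, every constant produced depends only on $\tau$ and $\alpha$, which is item (3).

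I do not expect a serious obstacle here, precisely because the $C^0$ estimate — the part carrying the difficulty specific to the Gaussian weight — is already in hand. The only thing requiring care is the bookkeeping: at each step of the bootstrap the H\"older norm of the nonlinear coefficient $e^{(h'^2+h^2)/2}$ must be estimated by the norm of $h$ obtained at the \emph{previous} step, so that the constants never depend on the solution itself. This is exactly the mechanism by which Lemma 6.5 of \cite{MR4252759} depends only on its Lemma 6.4, which is why the detailed computation can be omitted.
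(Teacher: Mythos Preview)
Your proposal is correct and is precisely the argument the paper has in mind: the paper does not reprove the lemma at all but simply notes that the higher-order estimates of \cite[Lemma 6.5]{MR4252759} depend only on the $C^0$ bound of \cite[Lemma 6.4]{MR4252759}, and your three steps (geometric bound on $\sqrt{h'^2+h^2}$ via $X(v)=h(v)v+h'(v)v^\perp\in K$, algebraic bound on $h''+h$ from the equation, then a quantitative bootstrap of $h''=2\pi f e^{(h'^2+h^2)/2}-h$) are exactly that argument specialized to dimension $2$.
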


We are now ready to state the main existence result. 
\begin{theorem}[Existence of smooth, small solutions] \label{thm 2.28.6}
Let $0<\alpha<1$ and $f\in C^{2,\alpha}(\s)$ be a positive even function with $\|f\|_{L^1}<\frac{1}{\sqrt{2\pi}}$. Then, there exists a $C^{4,\alpha}$, origin-symmetric $K$ with $\gamma_2(K)<\frac{1}{2}$ such that its support function $h$ solves
\begin{equation}
\label{eq 2.28.1}
	\frac{1}{{2\pi}} e^{-\frac{h'^2+h^2}{2}}(h''+h)=f.
\end{equation}
\end{theorem}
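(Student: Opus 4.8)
The plan is to solve \eqref{eq 2.28.1} by a Leray--Schauder degree argument on even support functions: I would continuously deform $f$ to a sufficiently small constant datum $C_0$ and evaluate the degree at the constant end using the uniqueness result, Theorem \ref{main uniqueness theorem}. The isoperimetric inequality \eqref{eq 3.28.1} is what will ultimately make the degree nonzero on the relevant part of the solution space, and Lemmas \ref{lemma C0} and \ref{lemma higher order} supply the a priori estimates needed to fix the domain.

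First I would set up the homotopy. Choose a constant $C_0>0$ so small that $2\pi C_0<\frac1{\sqrt{2\pi}}$ and so that the two constant solutions $r_1>r_2$ of $\frac1{2\pi}e^{-(h'^2+h^2)/2}(h''+h)=C_0$ --- which exist by Proposition \ref{prop constant solutions} since $C_0<e^{-1/2}/(2\pi)$ --- satisfy $\gamma_2(B_{r_2})<\frac12<\gamma_2(B_{r_1})$; this can be arranged because $r_2<1$ always, while $r_1\to\infty$ as $C_0\to0$. Set $f_t=(1-t)C_0+tf$ for $t\in[0,1]$: these are even, obey $\frac1\tau<f_t<\tau$ and $\|f_t\|_{C^{2,\alpha}}<\tau$ for one fixed $\tau$, and $\|f_t\|_{L^1}\le(1-t)2\pi C_0+t\|f\|_{L^1}<\frac1{\sqrt{2\pi}}$ for all $t$. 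Then I would pick $N\in(0,1)$, so that $\partial^2+N$ is invertible on even $2\pi$-periodic functions, and on $X=C^{2,\alpha}_{\mathrm{even}}(\s)$ define
\[
\Phi_t(h)=h-(\partial^2+N)^{-1}\Big[2\pi f_t\, e^{\frac{h'^2+h^2}{2}}+(N-1)h\Big].
\]
Because $(\partial^2+N)^{-1}$ gains derivatives, each $\Phi_t$ is a compact perturbation of the identity, $(h,t)\mapsto\Phi_t(h)$ is continuous, and $\Phi_t(h)=0$ is equivalent to $h$ solving $\frac1{2\pi}e^{-(h'^2+h^2)/2}(h''+h)=f_t$.

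The second step is to choose the open set. Let $\tau'$ be the constant produced by Lemmas \ref{lemma C0} and \ref{lemma higher order} (together with elliptic bootstrapping from $C^{2,\alpha}$ to $C^{4,\alpha}$) out of the above $\tau$, and take
\[
\mathcal O=\Big\{h\in X:\ \|h\|_{C^{2,\alpha}}<2\tau',\ \ h>\tfrac1{2\tau'},\ \ h''+h>\tfrac1{2\tau'}\text{ on }\s\Big\}.
\]
Every $h\in\mathcal O$ is the support function of an origin-symmetric convex body with $o$ in its interior, so every zero of $\Phi_t$ in $\overline{\mathcal O}$ is a genuine solution of the problem with datum $f_t$; by Lemmas \ref{lemma C0} and \ref{lemma higher order} such a zero satisfies the \emph{strict} bounds with $\tau'$ and hence lies in $\mathcal O$, so $\Phi_t$ is zero-free on $\partial\mathcal O$ for all $t$. (Note that any solution automatically has $h''+h=2\pi f_t e^{(h'^2+h^2)/2}>0$, so convexity cannot degenerate along a limit of solutions.) Since $h\mapsto\gamma_2([h])$ is continuous on $X$, I would then split $\mathcal O$ into the open sets $\mathcal O_-=\{h\in\mathcal O:\gamma_2([h])<\tfrac12\}$ and $\mathcal O_+=\{h\in\mathcal O:\gamma_2([h])>\tfrac12\}$; one has $\partial\mathcal O_-\subset\partial\mathcal O\cup\{h\in\overline{\mathcal O}:\gamma_2([h])=\tfrac12\}$, and a zero of $\Phi_t$ on the latter set would be a body $K$ with $\gamma_2(K)=\tfrac12$ and $\|f_t\|_{L^1}=|S_{\gamma_2,K}|$, which by \eqref{eq 3.28.1} forces $\|f_t\|_{L^1}\ge\frac1{\sqrt{2\pi}}$, a contradiction. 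Hence $\Phi_t$ is zero-free on $\partial\mathcal O_-$ for all $t$, the Leray--Schauder degree $\deg(\Phi_t,\mathcal O_-,0)$ is defined, and $\deg(\Phi_0,\mathcal O_-,0)=\deg(\Phi_1,\mathcal O_-,0)$.

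Finally I would compute the degree at $t=0$. By Theorem \ref{main uniqueness theorem} the only zeros of $\Phi_0$ in $\overline{\mathcal O}$ are the constants $r_1$ and $r_2$, and by the choice of $C_0$ only $r_2$ lies in $\mathcal O_-$. A direct linearization shows that $D\Phi_0(r)$ is diagonalized by $\cos k\theta$ with eigenvalue $\mu_k=\frac{1-r^2-k^2}{N-k^2}$; for $r=r_2\in(0,1)$ and $N\in(0,1)$ all the $\mu_k$ are positive, so $r_2$ is a nondegenerate zero of Leray--Schauder index $+1$, giving $\deg(\Phi_1,\mathcal O_-,0)=1\neq0$. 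Thus $\Phi_1$ has a zero $h\in\mathcal O_-$, and $K=[h]$ is the desired $C^{4,\alpha}$, origin-symmetric body with $\gamma_2(K)<\tfrac12$ solving \eqref{eq 2.28.1}. I expect the main obstacle to be precisely this domain splitting: over all of $\mathcal O$ the degree of $\Phi_0$ vanishes, since the large constant $r_1$ contributes index $-1$, so the argument succeeds only because \eqref{eq 3.28.1} legitimizes $\mathcal O_-$ as a degree domain and Theorem \ref{main uniqueness theorem} guarantees $r_2$ is the \emph{only} solution trapped inside it at $t=0$. A secondary point to watch is the uniformity in $t$ of the estimates in Lemmas \ref{lemma C0}--\ref{lemma higher order}, which is why $f_t$ must be built to stay between two positive constants with a uniform $C^{2,\alpha}$ bound.
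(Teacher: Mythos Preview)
Your proposal is correct and follows essentially the same strategy as the paper: the same homotopy $f_t=(1-t)C_0+tf$, the same use of Theorem~\ref{main uniqueness theorem} to isolate the small constant solution $r_2$ at $t=0$, the same invocation of the Gaussian isoperimetric inequality to rule out zeros on the slice $\{\gamma_2=\tfrac12\}$, and the same \emph{a priori} estimates (Lemmas~\ref{lemma C0}--\ref{lemma higher order}) to keep solutions away from $\partial\mathcal O$. The only technical difference is packaging: the paper applies Li's degree theory for elliptic operators directly to $F(h;t)=h''+h-2\pi e^{(h'^2+h^2)/2}f_t$ on $C^{4,\alpha}$, while you recast the problem as a compact perturbation of the identity on $C^{2,\alpha}$ and use Leray--Schauder degree, computing the eigenvalues of $D\Phi_0(r_2)$ explicitly rather than citing Li's Proposition~2.4; both routes yield the same nonzero degree on the sub-domain $\{\gamma_2<\tfrac12\}$.
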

\begin{proof}
	We prove the existence of solution using the degree theory for second-order nonlinear elliptic operators developed in Li \cite{MR1026774}.
	
	By Theorem \ref{main uniqueness theorem}, for sufficiently small $c_0>0$, the equation
	\begin{equation}
		\frac{1}{{2\pi}} e^{-\frac{h'^2+h^2}{2}}(h''+h)=c_0
	\end{equation}
	admits two constant solutions. Let $h_1\equiv r_1>0$ and $h_2\equiv r_2>0$ be the two constant solutions, with $r_1>r_2$. Then, for $i=1, 2$, we have
	\begin{equation}
		\frac{1}{{2\pi}} e^{-\frac{r_i^2}{2}}r_i =c_0.
	\end{equation}
	A quick analysis of the function $e^{-t^2/2}t$ yields that when $c_0$ is sufficiently small, we have $\gamma_2(r_1 B)>\frac{1}{2}$ and $\gamma_2(r_2B)<\frac{1}{2}$. We also require that $c_0>0$ is small enough so that $\|c_0\|_{L^1}<\frac{1}{\sqrt{2\pi}}$. We also require that $c_0>0$ is chosen so that the operator $L\phi=\phi''+(1-r_2^2)\phi$ is invertible.
	
	Let $F(\cdot; t):C^{4,\alpha}(\s)\rightarrow C^{2,\alpha}(\s)$ be defined as 
	\begin{equation}
		F(h;t) = h''+h-2\pi e^{\frac{h'^2+h^2}{2}}f_t,
	\end{equation} 
	for $t\in [0,1]$, where
	\begin{equation}
		f_t = (1-t)c_0+tf.
	\end{equation}
	Note that since $f>0$ is $C^{2,\alpha}$, there exists $\tau>0$ such that $\frac{1}{\tau}<f_t<\tau$ and $\|f_t\|_{C^{2,\alpha}}<\tau$. It is also simple to see $\|f_t\|_{L^1}<\frac{1}{\sqrt{2\pi}}$. We choose $\tau'>0$ according to  Lemmas \ref{lemma C0} and \ref{lemma higher order}.	
	Define $O\subset C^{4,\alpha}(\s)$ by
	\begin{equation}
		O=\left\{h\in C^{4,\alpha}(\s) \text{ is even}: \frac{1}{\tau'}<h<\tau', \frac{1}{\tau'}<h''+h<\tau', \|h\|_{C^{4,\alpha}}<\tau', \gamma_2(h)<\frac{1}{2}\right\}.
	\end{equation}
	Here $\gamma_2(h)=\gamma_2(K)$ where $K$ is the origin-symmetric convex body whose support function is $h$. This can be done since $h\in O$ is strictly convex. Note that $h_2\equiv r_2 \in O$, while $h_1\equiv r_1\notin O$.
	
	We claim now that for each $t\in [0,1]$, if $h\in \partial O$, then 
	\begin{equation}
		F(h;t)\neq 0.
	\end{equation}
	Indeed, if $F(h;t)=0$, then by Lemmas \ref{lemma C0} and \ref{lemma higher order}, it must be the case that $\gamma_2(h)=\frac{1}{2}$. However, by Gaussian isoperimetric inequality, this implies that $|S_{\gamma_2, K}|\geq \frac{1}{\sqrt{2\pi}}$. This is a contradiction to the fact that $F(h;t)=0$ and that $\|f_t\|<\frac{1}{\sqrt{2\pi}}$. 
	
	As a consequence, the degree of the map $F(\cdot, t)$ is well defined on $O$. Moreover, by Proposition 2.2 in Li \cite{MR1026774},
	\begin{equation}
	\label{eq 2.28.3}
		\deg (F(\cdot; 0), O, 0) = \deg(F(\cdot; 1), O, 0).
	\end{equation}
	Let us now compute $\deg (F(\cdot; 0), O, 0)$. For simplicity, write $F(\cdot)=F(\cdot; 0)$. Recall that $f=c_0$ is so chosen that $h\equiv r_2$ is the only solution in $O$ to \eqref{eq 2.28.1}. It is simple to compute the linearized operator of $F$ at the constant function $r_2$:
	\begin{equation}
		L_{r_2}\phi = \phi''+(1-r_2^2)\phi,
	\end{equation}
	which is invertible by our choice of $c_0$. By Proposition 2.3 in Li \cite{MR1026774}, this implies
	\begin{equation}
	\label{eq 2.28.4}
		\deg(F, O, 0)=\deg(L_{r_2},O,0)\neq 0,
	\end{equation}
	where the last inequality follows from Proposition 2.4\footnote{Proposition 2.4 in Li \cite{MR1026774} contains some typos, which were corrected by Li on his personal webpage.} in Li \cite{MR1026774}. Equations \eqref{eq 2.28.3} and \eqref{eq 2.28.4} now immediately implies that $\deg(F(\cdot; 1), O, 0)\neq 0$, which in turn implies the existence of solution.
\end{proof}

We remark that through a simple approximation argument, the regularity assumption on $f$ may be dropped.

\begin{theorem}
\label{thm 2.28.8}
	Let $f\in L^1(\s)$ be an even function such that $\|f\|_{L^1} <\frac{1}{\sqrt{2\pi}}$. If there exists $\tau>0$ such that $\frac{1}{\tau}<f<\tau$ almost everywhere on $\s$, then there exists an origin-symmetric $K$ with $\gamma_2(K)<\frac{1}{2}$ such that
	\begin{equation}
		dS_{\gamma_2, K}(v) = f(v)dv.
	\end{equation}
\end{theorem}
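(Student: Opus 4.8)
The plan is to approximate $f$ by smooth functions, invoke Theorem \ref{thm 2.28.6} for each approximant, and then pass to the limit using the Blaschke selection theorem together with the weak continuity of the Gaussian surface area measure.

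First I would mollify. Since $f\in L^1(\s)$ is even with $\frac1\tau<f<\tau$ almost everywhere, convolving $f$ on $\s$ with a smooth, nonnegative, even kernel of unit mass produces functions $f_j\in C^\infty(\s)\subset C^{2,\alpha}(\s)$ that are positive and even, satisfy $\frac1\tau<f_j<\tau$ everywhere (an average stays between the bounds), obey $\|f_j\|_{L^1}\le\|f\|_{L^1}<\frac{1}{\sqrt{2\pi}}$ by Young's inequality, and converge to $f$ in $L^1(\s)$. By Theorem \ref{thm 2.28.6}, for each $j$ there is a $C^{4,\alpha}$, origin-symmetric convex body $K_j$ with $\gamma_2(K_j)<\frac12$ whose support function $h_j=h_{K_j}$ solves \eqref{eq 2.28.1} with right-hand side $f_j$; equivalently $dS_{\gamma_2,K_j}=f_j\,dv$.

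Next I would extract a convergent subsequence. Because every $f_j$ satisfies $\frac1\tau<f_j<\tau$ with the \emph{same} $\tau$, Lemma \ref{lemma C0} furnishes a constant $\tau'>0$ independent of $j$ with $\frac{1}{\tau'}<h_j<\tau'$ on $\s$. Hence the $K_j$ are uniformly bounded and contain a fixed ball about the origin, so the Blaschke selection theorem yields a subsequence (not relabeled) converging in the Hausdorff metric to a convex body $K$ containing the origin in its interior; as a Hausdorff limit of origin-symmetric bodies, $K$ is origin-symmetric. By the weak continuity of the Gaussian surface area measure (Theorem 3.4 in \cite{MR4252759}), $S_{\gamma_2,K_j}\to S_{\gamma_2,K}$ weakly, while $f_j\to f$ in $L^1$ gives $f_j\,dv\to f\,dv$ weakly; therefore $dS_{\gamma_2,K}=f\,dv$.

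Finally I would check $\gamma_2(K)<\frac12$. Since $\gamma_2$ is continuous under Hausdorff convergence and $\gamma_2(K_j)<\frac12$, we get $\gamma_2(K)\le\frac12$; if equality held, the Gaussian isoperimetric inequality \eqref{eq 3.28.1} (in the form recalled immediately after it) would force $|S_{\gamma_2,K}|\ge\frac{1}{\sqrt{2\pi}}$, contradicting $|S_{\gamma_2,K}|=\|f\|_{L^1}<\frac{1}{\sqrt{2\pi}}$. Hence $\gamma_2(K)<\frac12$, which completes the argument. The only genuinely delicate points are the uniformity of the $C^0$ estimates—precisely the content of Lemma \ref{lemma C0}, whose conclusion depends on $\tau$ alone—and the exclusion of the borderline case $\gamma_2(K)=\frac12$ via the Gaussian isoperimetric inequality; everything else is routine compactness and weak-continuity bookkeeping.
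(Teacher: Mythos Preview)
Your proposal is correct and follows essentially the same route as the paper: mollify $f$ to smooth approximants preserving the two-sided bounds and the $L^1$ constraint, apply Theorem~\ref{thm 2.28.6}, use the uniform $C^0$ estimate of Lemma~\ref{lemma C0} together with Blaschke selection and weak continuity of $S_{\gamma_2,\cdot}$ to pass to the limit, and rule out $\gamma_2(K)=\tfrac12$ via the Gaussian isoperimetric inequality. Your write-up is in fact slightly more explicit than the paper's in justifying why the mollifications inherit the bounds $\tfrac1\tau<f_j<\tau$ and the $L^1$ constraint.
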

\begin{proof}
	We may approximate $f$ by a sequence of smooth functions $f_i$ such that $\mu_i=f_i dv$ converges weakly to $\mu=fdv$, and $\frac{1}{\tau}<f_i<\tau$ on $\s$. Because of weak convergence, we may (by discarding the first finitely many terms) assume $|\mu_i|<\frac{1}{\sqrt{2\pi}}$.
	
	By Theorem \ref{thm 2.28.6}, there exists $C^{4,\alpha}$, origin-symmetric $K_i$ with $\gamma_2(K_i)<\frac{1}{2}$ such that $S_{\gamma_2, K_i}=\mu_i$, or, equivalently, their support functions $h_i$ solves the equation
	\begin{equation}
		\frac{1}{{2\pi}} e^{-\frac{h_i'^2+h_i^2}{2}}(h_i''+h_i)=f_i
	\end{equation}
	By Lemma \ref{lemma C0}, there exists $\tau'>0$, independent of $i$, such that
	\begin{equation}
	\label{eq 2.28.7}
		\frac{1}{\tau'}B\subset K_i\subset \tau'B.
	\end{equation}
	Using Blaschke's selection theorem, we may assume (by possibly taking a subsequence) that $K_i$ converges in Hausdorff metric to an origin-symmetric convex body $K$. By \eqref{eq 2.28.7},
	\begin{equation}
		\frac{1}{\tau'}B\subset K\subset \tau'B.
	\end{equation} 
	The weak continuity of $S_{\gamma_2,K}$ in $K$ now implies that
	\begin{equation}
		S_{\gamma_2, K}=\mu.
	\end{equation}
	Note that by continuity of $\gamma_2$, we have $\gamma_2(K)\leq \frac{1}{2}$. That the inequality is strict follows from the Gaussian isoperimetric inequality and the fact that $\|f\|_{L^1}<\frac{1}{\sqrt{2\pi}}$.
\end{proof}

We remark that the \emph{a priori} estimates, Lemmas \ref{lemma C0} and \ref{lemma higher order}, work in higher dimensions with no essential change in the proof. Therefore, the ability to extend the existence results---Theorems \ref{thm 2.28.6}, \ref{thm 2.28.8}---depends on our ability to establish uniqueness result for constant $f$ (Theorem \ref{main uniqueness theorem}) to higher dimensions.

\begin{conjecture}
	Let $n\geq 3$. If $h$ is a nonnegative solution to the equation
	\begin{equation}
		\frac{1}{(\sqrt{2\pi})^n} e^{-\frac{|\nabla h|^2+h^2}{2}}\det (\nabla^2 h+hI)=c>0, 
	\end{equation}
	then $h$ must be a constant solution.
\end{conjecture}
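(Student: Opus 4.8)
The plan is to split the argument according to the size of $c$ and to reduce, as far as possible, to the one–dimensional analysis already carried out in Sections \ref{sec integral}--\ref{sect estimation}. Write $\tilde c=(\sqrt{2\pi})^n c$ and $g_n(t)=t^{n-1}e^{-t^2/2}$; then $g_n$ is strictly increasing on $[0,\sqrt{n-1}]$ and strictly decreasing on $[\sqrt{n-1},\infty)$, with maximum $g_n(\sqrt{n-1})$. Evaluating the equation at a maximum point $v_0$ of $h$ (where $\nabla h(v_0)=0$ and $\nabla^2h(v_0)\le 0$, so $0<\det(\nabla^2h+hI)(v_0)\le h(v_0)^{n-1}$) gives $\tilde c\le g_n(h_{\max})$, and at a minimum point it gives $\tilde c\ge g_n(h_{\min})$. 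Hence $g_n(h_{\min})\le\tilde c\le g_n(h_{\max})$, which is impossible when $\tilde c>g_n(\sqrt{n-1})$ and forces $h_{\min}=h_{\max}=\sqrt{n-1}$ when $\tilde c=g_n(\sqrt{n-1})$. This settles the conjecture for $c\ge g_n(\sqrt{n-1})(\sqrt{2\pi})^{-n}$, exactly as Theorem \ref{thm 3.22.1} does when $n=2$. Moreover, once $c$ is small enough that $\gamma_n(r_1B)\ge\tfrac12$ (with $r_1$ the larger constant solution), the Ehrhard inequality \eqref{eq 3.28.4} shows that the only solution with $\gamma_n(K)\ge\tfrac12$ is $r_1B$, just as in \cite[Theorem 1.1]{MR4252759}; so the remaining task is to rule out non-constant solutions of small Gaussian volume in the range $0<c<g_n(\sqrt{n-1})(\sqrt{2\pi})^{-n}$.

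For that regime the plan is to try to reduce to an ODE. Since $\det(\nabla^2h+hI)=\tilde c\,e^{(|\nabla h|^2+h^2)/2}>0$, every solution is the support function of a smooth, strictly convex body, and the equation is $O(n)$–invariant; I would attempt to prove that $h$ is rotationally symmetric about some axis by a moving–plane/reflection argument: reflecting across a great subsphere, the difference between $h$ and its reflection satisfies a homogeneous linear uniformly elliptic equation (its coefficients being controlled by $C^2$ bounds of the type in Lemma \ref{lemma higher order}) with zero Dirichlet data on the equator, and one would hope to conclude it vanishes via a Hopf–type argument. Granted rotational symmetry, writing $h=h(\phi)$ in the colatitude $\phi\in[0,\pi]$ turns the equation into
\begin{equation}
	e^{-(h'^2+h^2)/2}\,(h''+h)\,(h+h'\cot\phi)^{\,n-2}=\tilde c,
\end{equation}
a second–order ODE on a meridian; one would then adapt the quadrature $\Theta$ of \eqref{eq 3.23.3} — now the angular distance between consecutive critical values of $h$ along a meridian — together with the monotonicity estimates of Section \ref{sect estimation}, aiming at a lower bound in the spirit of Lemma \ref{lemma final estimate} that is incompatible with a non-constant periodic profile. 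As an alternative to the symmetry route, one could look for a Gauss–curvature–type flow carrying the Gaussian weight whose only stationary solutions are centered balls, in the spirit of Andrews \cite{MR1949167}.

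The main obstacle is that the backbone of the planar argument is missing in higher dimensions. The conservation law \eqref{eq 3.22.3}, $e^{-(h'^2+h^2)/2}+ch\equiv E$, holds only because $\s$ is one–dimensional; for $n\ge 3$ the reduced ODE above has no first integral, so one cannot pass from a PDE to the estimate of a single scalar quadrature $\Theta(c,h_0,r)$, and even the reduced ODE is considerably more intricate than \eqref{eq 3.22.2}. In addition, below Gaussian volume $\tfrac12$ neither the Ehrhard inequality nor any other known Brunn--Minkowski–type tool gives information, so the rigidity must come from the fine structure of the equation rather than from a convexity inequality. I expect the rotational–symmetry step to be the genuine sticking point: moving–plane methods for Monge--Amp\`{e}re equations on $\sn$ are delicate precisely because the zeroth–order coefficient of the linearized operator — equal to $n-1-|\nabla h|^2-h^2$, hence $n-1-r^2>0$ at a constant solution $h\equiv r$ with $r<\sqrt{n-1}$ — has the wrong sign for the maximum principle, and Kazdan--Warner–type obstructions may intervene. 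Proving the conjecture even under the additional assumption that $K$ is origin–symmetric would already be significant progress.
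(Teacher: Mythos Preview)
The statement you are addressing is labelled a \emph{conjecture} in the paper; the authors do not prove it and offer no argument beyond the remark that the planar existence results would extend if this higher-dimensional uniqueness were known. There is therefore no ``paper's own proof'' to compare against, and your write-up is, appropriately, a research outline rather than a proof.

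A few comments on the outline itself. Your extremum argument correctly yields $g_n(h_{\min})\le\tilde c\le g_n(h_{\max})$ and disposes of the case $\tilde c>g_n(\sqrt{n-1})$. However, the equality case $\tilde c=g_n(\sqrt{n-1})$ is not settled by what you wrote: from $g_n(h_{\max})\ge\tilde c=\max g_n$ you get $h_{\max}=\sqrt{n-1}$, but $g_n(h_{\min})\le\max g_n$ puts no constraint on $h_{\min}$, so the conclusion $h_{\min}=h_{\max}$ does not follow. In the planar case the paper closes this gap using the first integral $\phi$ (Theorem~\ref{thm 3.22.1}), which, as you note, is unavailable for $n\ge3$; one would need a separate argument (perhaps unique continuation from the fact that $\nabla^2h=0$ at the maximum point), and this is already nontrivial. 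For the main range $0<\tilde c<g_n(\sqrt{n-1})$ you correctly identify the essential obstruction: the conservation law \eqref{eq 3.22.3} has no analogue, so the entire $\Theta$-integral machinery of Sections~\ref{sec integral}--\ref{sect estimation} does not transfer, and the moving-plane reduction to an axisymmetric ODE faces exactly the sign obstacle you describe. Your assessment that the problem is genuinely open, and that even the origin-symmetric case would be progress, is consistent with the paper's own stance.
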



\begin{thebibliography}{10}

\bibitem{MR0001597}
A.~Aleksandrov.
\newblock \"{U}ber die {O}berfl\"{a}chenfunktion eines konvexen {K}\"{o}rpers.
  ({B}emerkung zur {A}rbeit ``{Z}ur {T}heorie der gemischten {V}olumina von
  konvexen {K}\"{o}rpern'').
\newblock {\em Rec. Math. N.S. [Mat. Sbornik]}, 6(48):167--174, 1939.

\bibitem{MR1949167}
B.~Andrews.
\newblock Classification of limiting shapes for isotropic curve flows.
\newblock {\em J. Amer. Math. Soc. (JAMS)}, 16:443--459 (electronic), 2003.

\bibitem{MR1243336}
K.~Ball.
\newblock The reverse isoperimetric problem for {G}aussian measure.
\newblock {\em Discrete Comput. Geom.}, 10(4):411--420, 1993.

\bibitem{MR3872853}
G.~Bianchi, K.~B\"{o}r\"{o}czky, A.~Colesanti, and D.~Yang.
\newblock The {$L_p$}-{M}inkowski problem for {$-n<p<1$}.
\newblock {\em Adv. Math.}, 341:493--535, 2019.

\bibitem{MR399402}
C.~Borell.
\newblock The {B}runn-{M}inkowski inequality in {G}auss space.
\newblock {\em Invent. Math.}, 30(2):207--216, 1975.

\bibitem{MR3415694}
K.~J. B{\"o}r{\"o}czky and M.~Henk.
\newblock Cone-volume measure of general centered convex bodies.
\newblock {\em Adv. Math.}, 286:703--721, 2016.

\bibitem{MR3825606}
K.~J. B\"{o}r\"{o}czky, M.~Henk, and H.~Pollehn.
\newblock Subspace concentration of dual curvature measures of symmetric convex
  bodies.
\newblock {\em J. Differential Geom.}, 109(3):411--429, 2018.

\bibitem{MR2964630}
K.~J. B{\"o}r{\"o}czky, E.~Lutwak, D.~Yang, and G.~Zhang.
\newblock The log-{B}runn-{M}inkowski inequality.
\newblock {\em Adv. Math.}, 231:1974--1997, 2012.

\bibitem{MR3037788}
K.~J. B\"{o}r\"{o}czky, E.~Lutwak, D.~Yang, and G.~Zhang.
\newblock The logarithmic {M}inkowski problem.
\newblock {\em J. Amer. Math. Soc.}, 26(3):831--852, 2013.

\bibitem{MR1038360}
L.~Caffarelli.
\newblock Interior {$W^{2,p}$} estimates for solutions of the
  {M}onge-{A}mp\`ere equation.
\newblock {\em Ann. of Math. (2)}, 131:135--150, 1990.

\bibitem{MR1005611}
L.~A. Caffarelli.
\newblock Interior a priori estimates for solutions of fully nonlinear
  equations.
\newblock {\em Ann. of Math. (2)}, 130(1):189--213, 1989.

\bibitem{MR1038359}
L.~A. Caffarelli.
\newblock A localization property of viscosity solutions to the
  {M}onge-{A}mp\`ere equation and their strict convexity.
\newblock {\em Ann. of Math. (2)}, 131(1):129--134, 1990.

\bibitem{MR3953117}
C.~Chen, Y.~Huang, and Y.~Zhao.
\newblock Smooth solutions to the {$L_p$} dual {M}inkowski problem.
\newblock {\em Math. Ann.}, 373(3-4):953--976, 2019.

\bibitem{MR4088419}
S.~Chen, Y.~Huang, Q.-R. Li, and J.~Liu.
\newblock The {$L_ p$}-{B}runn-{M}inkowski inequality for {$p<1$}.
\newblock {\em Adv. Math.}, 368:107166, 21, 2020.

\bibitem{MR3818073}
S.~Chen and Q.-R. Li.
\newblock On the planar dual {M}inkowski problem.
\newblock {\em Adv. Math.}, 333:87--117, 2018.

\bibitem{MR0423267}
S.~Y. Cheng and S.~T. Yau.
\newblock On the regularity of the solution of the {$n$}-dimensional
  {M}inkowski problem.
\newblock {\em Comm. Pure Appl. Math.}, 29:495--516, 1976.

\bibitem{MR2254308}
K.-S. Chou and X.-J. Wang.
\newblock The {$L_p$}-{M}inkowski problem and the {M}inkowski problem in
  centroaffine geometry.
\newblock {\em Adv. Math.}, 205:33--83, 2006.

\bibitem{MR3653949}
A.~Colesanti, G.~Livshyts, and A.~Marsiglietti.
\newblock On the stability of {B}runn-{M}inkowski type inequalities.
\newblock {\em J. Funct. Anal.}, 273(3):1120--1139, 2017.

\bibitem{alex2020dimensional}
A.~Eskenazis and G.~Moschidis.
\newblock The dimensional {B}runn-{M}inkowski inequality in {G}auss space.
\newblock {\em preprint}, 2020.

\bibitem{MR3263511}
R.~Gardner, D.~Hug, and W.~Weil.
\newblock The {O}rlicz-{B}runn-{M}inkowski theory: a general framework,
  additions, and inequalities.
\newblock {\em J. Differential Geom.}, 97(3):427--476, 2014.

\bibitem{MR4040624}
R.~Gardner, D.~Hug, S.~Xing, and D.~Ye.
\newblock General volumes in the {O}rlicz-{B}runn-{M}inkowski theory and a
  related {M}inkowski problem {II}.
\newblock {\em Calc. Var. Partial Differential Equations}, 59(1):Paper No. 15,
  33, 2020.

\bibitem{MR2657682}
R.~Gardner and A.~Zvavitch.
\newblock Gaussian {B}runn-{M}inkowski inequalities.
\newblock {\em Trans. Amer. Math. Soc.}, 362(10):5333--5353, 2010.

\bibitem{MR1298719}
R.~J. Gardner.
\newblock A positive answer to the {B}usemann-{P}etty problem in three
  dimensions.
\newblock {\em Ann. of Math. (2)}, 140:435--447, 1994.

\bibitem{MR1898210}
R.~J. Gardner.
\newblock The {B}runn-{M}inkowski inequality.
\newblock {\em Bull. Amer. Math. Soc. (N.S.)}, 39(3):355--405, 2002.

\bibitem{MR3882970}
R.~J. Gardner, D.~Hug, W.~Weil, S.~Xing, and D.~Ye.
\newblock General volumes in the {O}rlicz-{B}runn-{M}inkowski theory and a
  related {M}inkowski problem {I}.
\newblock {\em Calc. Var. Partial Differential Equations}, 58(1):Paper No. 12,
  35, 2019.

\bibitem{MR1689343}
R.~J. Gardner, A.~Koldobsky, and T.~Schlumprecht.
\newblock An analytic solution to the {B}usemann-{P}etty problem on sections of
  convex bodies.
\newblock {\em Ann. of Math. (2)}, 149:691--703, 1999.

\bibitem{https://doi.org/10.48550/arxiv.2203.05099}
Q.~Guang, Q.-R. Li, and X.-J. Wang.
\newblock The $l_p$-minkowski problem with super-critical exponents, 2022.

\bibitem{MR2652213}
C.~Haberl, E.~Lutwak, D.~Yang, and G.~Zhang.
\newblock The even {O}rlicz {M}inkowski problem.
\newblock {\em Adv. Math.}, 224(6):2485--2510, 2010.

\bibitem{MR3148545}
M.~Henk and E.~Linke.
\newblock Cone-volume measures of polytopes.
\newblock {\em Adv. Math.}, 253:50--62, 2014.

\bibitem{MR3725875}
M.~Henk and H.~Pollehn.
\newblock Necessary subspace concentration conditions for the even dual
  {M}inkowski problem.
\newblock {\em Adv. Math.}, 323:114--141, 2018.

\bibitem{hosle2020l_p}
J.~Hosle, A.~Kolesnikov, and G.~Livshyts.
\newblock On the $ {L}\_p $-{B}runn-{M}inkowski and dimensional
  {B}runn-{M}inkowski conjectures for log-concave measures.
\newblock {\em preprint}.

\bibitem{MR3573332}
Y.~Huang, E.~Lutwak, D.~Yang, and G.~Zhang.
\newblock Geometric measures in the dual {B}runn-{M}inkowski theory and their
  associated {M}inkowski problems.
\newblock {\em Acta Math.}, 216(2):325--388, 2016.

\bibitem{MR4252759}
Y.~Huang, D.~Xi, and Y.~Zhao.
\newblock The {M}inkowski problem in {G}aussian probability space.
\newblock {\em Adv. Math.}, 385:Paper No. 107769, 36, 2021.

\bibitem{MR2132298}
D.~Hug, E.~Lutwak, D.~Yang, and G.~Zhang.
\newblock On the {$L_p$} {M}inkowski problem for polytopes.
\newblock {\em Discrete Comput. Geom.}, 33:699--715, 2005.

\bibitem{MR3897433}
H.~Jian and J.~Lu.
\newblock Existence of solutions to the {O}rlicz-{M}inkowski problem.
\newblock {\em Adv. Math.}, 344:262--288, 2019.

\bibitem{MR3366854}
H.~Jian, J.~Lu, and X.-J. Wang.
\newblock Nonuniqueness of solutions to the {$L_p$}-{M}inkowski problem.
\newblock {\em Adv. Math.}, 281:845--856, 2015.

\bibitem{MR4485961}
A.~V. Kolesnikov and G.~Livshyts.
\newblock On the {L}ocal {V}ersion of the {L}og-{B}runn--{M}inkowski
  {C}onjecture and {S}ome {N}ew {R}elated {G}eometric {I}nequalities.
\newblock {\em Int. Math. Res. Not. IMRN}, (18):14427--14453, 2022.

\bibitem{MR4238914}
A.~V. Kolesnikov and G.~V. Livshyts.
\newblock On the {G}ardner-{Z}vavitch conjecture: symmetry in inequalities of
  {B}runn-{M}inkowski type.
\newblock {\em Adv. Math.}, 384:107689, 23, 2021.

\bibitem{MR4438690}
Alexander Kolesnikov and Emanuel Milman.
\newblock Local {$L^p$}-{B}runn--{M}inkowski inequalities for {$p<1$}.
\newblock {\em Mem. Amer. Math. Soc.}, 277(1360), 2022.

\bibitem{kryvonos2022measure}
L.~Kryvonos and D.~Langharst.
\newblock Measure theoretic minkowski's existence theorem and projection
  bodies, 2022.

\bibitem{li2022classification}
H.~Li and Y.~Wan.
\newblock Classification of solutions for the planar isotropic $l_p$ dual
  minkowski problem, 2022.

\bibitem{MR4055992}
Q.-R. Li, W.~Sheng, and X.-J. Wang.
\newblock Flow by {G}auss curvature to the {A}leksandrov and dual {M}inkowski
  problems.
\newblock {\em J. Eur. Math. Soc. (JEMS)}, 22(3):893--923, 2020.

\bibitem{MR1026774}
Y.-Y. Li.
\newblock Degree theory for second order nonlinear elliptic operators and its
  applications.
\newblock {\em Comm. Partial Differential Equations}, 14(11):1541--1578, 1989.

\bibitem{liu2022number}
Y.~Liu and J.~Lu.
\newblock On the number of solutions to the planar dual minkowski problem,
  2022.

\bibitem{MR3710641}
G.~Livshyts, A.~Marsiglietti, P.~Nayar, and A.~Zvavitch.
\newblock On the {B}runn-{M}inkowski inequality for general measures with
  applications to new isoperimetric-type inequalities.
\newblock {\em Trans. Amer. Math. Soc.}, 369(12):8725--8742, 2017.

\bibitem{MR2652209}
M.~Ludwig.
\newblock General affine surface areas.
\newblock {\em Adv. Math.}, 224:2346--2360, 2010.

\bibitem{MR2680490}
M.~Ludwig and M.~Reitzner.
\newblock A classification of {${\rm SL}(n)$} invariant valuations.
\newblock {\em Ann. of Math. (2)}, 172:1219--1267, 2010.

\bibitem{MR963487}
E.~Lutwak.
\newblock Intersection bodies and dual mixed volumes.
\newblock {\em Adv. in Math.}, 71:232--261, 1988.

\bibitem{MR1231704}
E.~Lutwak.
\newblock The {B}runn-{M}inkowski-{F}irey theory. {I}. {M}ixed volumes and the
  {M}inkowski problem.
\newblock {\em J. Differential Geom.}, 38:131--150, 1993.

\bibitem{MR1378681}
E.~Lutwak.
\newblock The {B}runn-{M}inkowski-{F}irey theory. {II}. {A}ffine and geominimal
  surface areas.
\newblock {\em Adv. Math.}, 118:244--294, 1996.

\bibitem{XLYZ}
E.~Lutwak, D.~Xi, D.~Yang, and G.~Zhang.
\newblock Chord measure in integral geometry and their {M}inkowski problems.
\newblock {\em Comm. Pure Appl. Math.}, in press.

\bibitem{MR1987375}
E.~Lutwak, D.~Yang, and G.~Zhang.
\newblock Sharp affine {$L_p$} {S}obolev inequalities.
\newblock {\em J. Differential Geom.}, 62:17--38, 2002.

\bibitem{MR2067123}
E.~Lutwak, D.~Yang, and G.~Zhang.
\newblock On the {$L_p$}-{M}inkowski problem.
\newblock {\em Trans. Amer. Math. Soc.}, 356:4359--4370, 2004.

\bibitem{https://doi.org/10.48550/arxiv.2104.12408}
E.~Milman.
\newblock Centro-affine differential geometry and the log-minkowski problem,
  2021.

\bibitem{minkowski2}
H.~Minkowski.
\newblock Volumen und oberfl\"{a}che.
\newblock {\em Math. Ann.}, pages 447--495, 1903.

\bibitem{MR1511220}
H.~Minkowski.
\newblock Volumen und {O}berfl\"{a}che.
\newblock {\em Math. Ann.}, 57(4):447--495, 1903.

\bibitem{MR2083397}
F.~Nazarov.
\newblock On the maximal perimeter of a convex set in {${\Bbb R}^n$} with
  respect to a {G}aussian measure.
\newblock In {\em Geometric aspects of functional analysis}, volume 1807 of
  {\em Lecture Notes in Math.}, pages 169--187. Springer, Berlin, 2003.

\bibitem{MR0478079}
A.~V. Pogorelov.
\newblock {\em The {M}inkowski multidimensional problem}.
\newblock V. H. Winston \& Sons, Washington, D.C.; Halsted Press [John Wiley
  \&\ Sons], New York-Toronto-London, 1978.

\bibitem{MR4220744}
E.~Putterman.
\newblock Equivalence of the local and global versions of the
  {$L^p$}-{B}runn-{M}inkowski inequality.
\newblock {\em J. Funct. Anal.}, 280(9):Paper No. 108956, 20, 2021.

\bibitem{MR3370038}
C.~Saroglou.
\newblock Remarks on the conjectured log-{B}runn-{M}inkowski inequality.
\newblock {\em Geom. Dedicata}, 177:353--365, 2015.

\bibitem{MR3155183}
R.~Schneider.
\newblock {\em Convex bodies: the {B}runn-{M}inkowski theory}, volume 151 of
  {\em Encyclopedia of Mathematics and its Applications}.
\newblock Cambridge University Press, Cambridge, expanded edition, 2014.

\bibitem{MR1901250}
A.~Stancu.
\newblock The discrete planar {$L_0$}-{M}inkowski problem.
\newblock {\em Adv. Math.}, 167:160--174, 2002.

\bibitem{MR2019226}
A.~Stancu.
\newblock On the number of solutions to the discrete two-dimensional
  {$L_0$}-{M}inkowski problem.
\newblock {\em Adv. Math.}, 180:290--323, 2003.

\bibitem{MR3209355}
D.~Xi, H.~Jin, and G.~Leng.
\newblock The {O}rlicz {B}runn-{M}inkowski inequality.
\newblock {\em Adv. Math.}, 260:350--374, 2014.

\bibitem{MR2729006}
G.~Xiong.
\newblock Extremum problems for the cone volume functional of convex polytopes.
\newblock {\em Adv. Math.}, 225:3214--3228, 2010.

\bibitem{MR1689339}
G.~Zhang.
\newblock A positive solution to the {B}usemann-{P}etty problem in {$\bold
  R^4$}.
\newblock {\em Ann. of Math. (2)}, 149:535--543, 1999.

\bibitem{MR3880233}
Y.~Zhao.
\newblock Existence of solutions to the even dual {M}inkowski problem.
\newblock {\em J. Differential Geom.}, 110(3):543--572, 2018.

\bibitem{MR3228445}
G.~Zhu.
\newblock The logarithmic {M}inkowski problem for polytopes.
\newblock {\em Adv. Math.}, 262:909--931, 2014.

\bibitem{MR3356071}
G.~Zhu.
\newblock The centro-affine {M}inkowski problem for polytopes.
\newblock {\em J. Differential Geom.}, 101:159--174, 2015.

\end{thebibliography}
\end{document}